\numberwithin{equation}{section}
\newtheorem{theorem}{Theorem}[section]
\newtheorem{lemma}[theorem]{Lemma}
\newtheorem{proposition}[theorem]{Proposition}
\newtheorem{remark}[theorem]{Remark}
\theoremstyle{definition}
\newcommand{\bR}{\mathbb R}
\newcommand{\Div}{\operatorname{div}}
\newcommand{\curl}{\operatorname{curl}}
\renewcommand{\epsilon}{\varepsilon}
\providecommand{\abs}[1]{\left\vert#1\right\vert}
\providecommand{\norm}[1]{\left\Vert#1\right\Vert}
\DeclareMathOperator{\diverge}{div}
\providecommand{\norm}[1]{\left\Vert#1\right\Vert}
\def\dt{\partial_t}
\def\pa{\partial}
\def\RRvert2{\right \vert\! \right\vert}
\def\Lvert3{\left \vert\!\left\vert\!\left\vert}
\def\Rvert3{\right \vert\!\right\vert\!\right\vert}
\def\nab{\nabla}
\def\dt{\partial_t}
\def\hal{\frac{1}{2}}
\def\ep{\varepsilon}
\def\ls{\lesssim}
\def\naba{\nab_{\mathcal{A}}}
\def\nabak{\nab_{\mathcal{A}^\kappa}}
\def\diva{\diverge_{\mathcal{A}}}
\def\divak{\diverge_{\mathcal{A}^\kappa}}
\def\V{\tilde{\mathcal{V}}}
\def\a{\mathcal{A}}
\def\i{\mathcal{I}}
\def\j{\mathcal{J}}
\def\fj1{\mathcal{J}^{-1}}
\def\bp{\bar\partial}
\def\nak{\nabla_{\mathcal{A}^\kappa}}
\def\ak{{\mathcal{A}^\kappa}}
\def\fk{{\psi^\kappa}}
\title[Free-surface incompressible ideal MHD]{On the construction of solutions to the free-surface incompressible ideal magnetohydrodynamic equations}
\author{Xumin Gu}
\address[X. Gu]{School of Mathematics\\ Shanghai University of Finance and Economics\\
Shanghai 200433, China
\newline \indent and
\newline \indent The Institute of Mathematical Sciences\\
The Chinese University of Hong Kong\\
Shatin, NT, Hong Kong}
\email{gu.xumin@shufe.edu.cn}
\author{Yanjin Wang}
\address[Y. J. Wang]{School of Mathematical Sciences\\
Xiamen University\\
Xiamen, Fujian 361005, China
\newline \indent and
\newline \indent The Institute of Mathematical Sciences\\
The Chinese University of Hong Kong\\
Shatin, NT, Hong Kong}
\email{yanjin$\_$wang@xmu.edu.cn}
\thanks{X. Gu was supported by the National Natural Science Foundation of China (No.~11601305).}
\thanks{Y. J. Wang was supported by the Fujian Province Natural Science Funds for Distinguished Young Scholar (No.~2015J06001), the National Natural Science Foundation of China (No.~11531010), and a Foundation for the Author of National Excellent Doctoral Dissertation of PR China (No.~201418).} 
\keywords{Free boundary; Well-posedness; Ideal MHD; Inviscid flows; Incompressible fluids.}
\subjclass[2010]{35L65, 35Q35, 76B03, 76W05}
\begin{document}


\begin{abstract}
We consider a free boundary problem for the incompressible ideal magnetohydrodynamic equations that describes the motion of the plasma in vacuum. The magnetic field is tangent and the total pressure vanishes along the plasma-vacuum interface. Under the Taylor sign condition of the total pressure on the free surface, we prove the local well-posedness of the problem in Sobolev spaces.
\end{abstract}

\maketitle

\section{Introduction}

\subsection{Eulerian formulation}

In this paper we construct local solutions to the free boundary problem for the incompressible ideal magnetohydrodynamic equations (MHD):
\begin{equation}
  \label{mhd}
\begin{cases}
\dt u  + u \cdot \nabla u + \nabla \big(p+\dfrac{1}{2}|B|^2\big) = B\cdot\nabla B &\text{in } \Omega (t),\\
\diverge u =0 &\text{in } \Omega (t),\\
\dt B + u \cdot \nabla B= B\cdot\nabla u &\text{in } \Omega (t),\\
\diverge B =0 &\text{in } \Omega (t).
\end{cases}
\end{equation}
In the equations \eqref{mhd}, $u$ is the velocity field, $B$ is the magnetic field, and $p$ is the pressure function of the fluid which occupies the moving bounded domain $\Omega(t)$. We require the following boundary conditions on the free surface $\Gamma(t):=\pa \Omega (t)$:
\begin{equation} \label{mhd1}
V(\Gamma(t)) = u\cdot n \quad\text{on } \Gamma(t)
\end{equation}
and
\begin{equation} \label{mhd2}
  p+\dfrac{1}{2}|B|^2=0 ,\quad
  B\cdot n=0 \quad\text{on }\Gamma(t).
\end{equation}
The equation \eqref{mhd1} is called the kinematic boundary condition which states that the free surface $\Gamma(t)$ moves with the velocity of the fluid, where $V(\Gamma(t))$ denote the normal velocity of $\Gamma(t)$
and $n$ denotes the outward unit normal of $\Gamma(t)$. Finally, we impose the initial condition
\begin{equation} \label{mhd3}
(u, B)=(u_0,B_0) \text{ on }\Omega(0),\text{ and }\Omega(0)=\Omega.
\end{equation}

The system \eqref{mhd}--\eqref{mhd3} can be used to model the motion of the plasma in vacuum, which is a special case of the classical plasma-vacuum interface problem \cite{Go_04}. In the general setting, the plasma region $\Omega(t)$ is surrounded by the vacuum region $\Omega_v(t)$, and the moving plasma-vacuum interface $\Gamma(t)=\partial\Omega(t)$ does not intersect with the outer wall $\partial\Omega_w$, where $\Omega_w=\Omega(t)\cup \Gamma(t)\cup \Omega_v(t)$ is a fixed domain. When the characteristic plasma velocity is very small compared to the speed of sound, the motion of the plasma is governed by the incompressible ideal MHD in $\Omega(t)$, i.e., \eqref{mhd}. In the vacuum region $\Omega_v(t)$, we neglect the displacement current in the Maxwell equations as usual in the non-relativistic MHD and assume the pre-Maxwell equations:
\begin{equation}
\label{premax}
\begin{cases}
\curl \mathcal{B} =0,\quad \Div \mathcal{B} =0 &\text{in }\Omega_v(t),\\
\curl \mathcal{E} =-\dt \mathcal{B},\quad \Div \mathcal{E} =0&\text{in }\Omega_v(t).
\end{cases}
\end{equation}
In the equations \eqref{premax}, $\mathcal{B}$ and $\mathcal{E}$ denotes the magnetic and electric fields in vacuum, respectively. The motion of the plasma is connected with the vacuum through the jump condition on the interface $\Gamma(t)$:
\begin{equation}
  \left(\big(p  + \dfrac{1}{2}|B|^2\big)I  - B\otimes B\right)n=\left(  \dfrac{1}{2}|\mathcal{B}|^2I  -\mathcal{B}\otimes\mathcal{ B}\right)n\quad\text{on } \Gamma(t)
\end{equation}
and
\begin{equation}
 (B-\mathcal{B})\cdot n  = 0,\quad   (E-  \mathcal{E}) \times n=  -(u \cdot n) (B- \mathcal{B}) \quad\text{on } \Gamma(t).
\end{equation}
Here  $E$ is the electric field of the plasma, i.e., $E=-u\times B$. We also require the jump condition on $\partial\Omega_w$:
\begin{equation}
 (\mathcal{B}-\hat{\mathcal{B}})\cdot \nu  = 0,\quad (\mathcal E-\hat{\mathcal E})\times \nu=0 \quad\text{on } \partial\Omega_w.
\end{equation}
Here $\hat{\mathcal{B}}$ and $\hat{\mathcal E}$ denotes the magnetic and electric fields outside the wall $\partial\Omega_w$ and $\nu$ is the outward unit normal of $\partial\Omega_w$.

The well-posedness of the general plasma-vacuum interface problem is still an open question. In this paper we restrict to a special case when $\Omega_w$ is a bounded simply connected domain and $\pa\Omega_w$ is a perfectly conducting wall and when the plasma is a perfect conductor \cite{Go_04}. In this setting, we have the following boundary conditions
\begin{equation}
\label{B1}
\mathcal{B}\cdot \nu=0,\quad \mathcal{E}\times\nu=0, \quad\text{on } \partial\Omega_w
\end{equation}
and
\begin{equation}
\label{B2}
B\cdot n=\mathcal{B}\cdot n=0\quad \text{on } \Gamma(t).
\end{equation}
We thus find that $\mathcal{B}=0$ (and then $\mathcal{E}=0$) and hence the plasma-vacuum interface problem reduces to the free boundary problem \eqref{mhd}--\eqref{mhd3}. However, even for this special case, up to our knowledge, there does not exist complete well-posedness theory. Our purpose of this paper is to establishing the local well-posedness for this problem.

\subsection{Lagrangian reformulation}\label{lagrangian}

We transform the Eulerian problem \eqref{mhd}--\eqref{mhd3} on the moving domain $\Omega(t)$ to be one on the fixed domain $\Omega$ by the use of Lagrangian coordinates. Let $\eta(x,t)\in\Omega(t)$ denote the ``position" of the fluid particle $x$ at time $t$, i.e.,
\begin{equation}
\begin{cases}
\partial_t\eta(x,t)=u(\eta(x,t),t),\quad t>0,
\\ \eta(x,0)=x.
\end{cases}
\end{equation}
We assume that $\eta(\cdot,t)$ is invertible and define the Lagrangian unknowns on $\Omega$:
\begin{equation}
v(x,t)=u(\eta(x,t),t),\ b(x,t)=B(\eta(x,t),t) \text{ and } q(x,t)=p(\eta(x,t),t)+\frac{1}{2}|b(x,t)|^2.
\end{equation}
Then in Lagrangian coordinates, the problem \eqref{mhd}--\eqref{mhd3} becomes the following:
\begin{equation}\label{eq:mhdo}
\begin{cases}
\partial_t\eta =v &\text{in } \Omega,\\
\partial_tv  +\naba q =b\cdot\naba b  &\text{in } \Omega,\\
 \diva v = 0 &\text{in  }\Omega,\\
\partial_tb =b\cdot\naba v &\text{in } \Omega,\\
\diva b =0 &\text{in  }\Omega,\\
q=0&\text{on }\Gamma:=\partial\Omega,
\\ b \cdot n  =0 &\text{on }\Gamma,\\
 (\eta,v, b)\mid_{t=0} =(\text{Id}, v_0, b_0).
 \end{cases}
\end{equation}
Here the matrix $\mathcal A=\mathcal A(\eta):=(\nabla\eta)^{-T}$, the differential operators $\naba:=(\partial_1^{\a}, \partial_2^{\a}, \partial_3^{\a})$ with $\partial_i^{\a}=\a_{ij}\partial_j$ and $\diva g=\a_{ij}\partial_jg_i$. $n=\mathcal{A}N/\abs{\a N}$, where $N$ is the outward unit normal of $\Gamma$. Note that the kinematic boundary condition \eqref{mhd1} is automatically satisfied by the first equation in \eqref{eq:mhdo}.

We shall find out the conserved quantities for the system \eqref{eq:mhdo}. These quantities will help us reformulate the system in a proper way, and the reformulation will be more suitable for our construction of solutions. To begin with, we denote $J = {\rm det}(\nabla\eta)$, the Jacobian of the coordinate
transformation. Then we have $\dt J=0$, which implies $J=1$. Next, applying $\a^T$ to the fourth equation in \eqref{eq:mhdo}, we obtain
\begin{equation}
\a_{ji}\partial_t b_j=\a_{ji}b_m\a_{m\ell} \partial_\ell v_j =\a_{ji}b_m\a_{m\ell}\partial_t(\partial_\ell\eta_j)
=- \partial_t\a_{ji}b_m\a_{m\ell}\partial_\ell\eta_j=-\partial_t\a_{ji} b_j,
\end{equation}
which implies that
\begin{equation}\label{re00}
\partial_t (\a^T b)=0.
\end{equation}
Since $\eta(x,0)=x$ and $\a=(\nabla\eta)^{-T}$, we then have
\begin{equation}\label{hformula}
b=b_0\cdot\nabla \eta.
\end{equation}
It can be directly verified that $\diva b =0$ in $\Omega$ and $b \cdot n  =0 $ on $\Gamma$ if  $\Div b_0=0$ in $\Omega$ and $b_0\cdot N=0$ on $\Gamma$. That is, we must regard these two conditions as the restriction on the initial data.

Consequently, \eqref{hformula} motivates us to eliminate the magnetic field $b$ from the system \eqref{eq:mhdo}. Indeed, we may rewrite the Lorentz force term:
\begin{equation}
b\cdot\nabla_\a b=b_j\a_{j\ell}\partial_\ell b=b_{0\ell}\partial_\ell(b_{0m}\partial_m\eta)
 :=(b_0\cdot\nabla)^2\eta.
\end{equation}
Then the system \eqref{eq:mhdo} can be reformulated equivalently as a free-surface incompressible Euler system with a forcing term induced by the flow map:
\begin{equation}
\label{eq:mhd}
\begin{cases}
\partial_t\eta =v &\text{in } \Omega,\\
\partial_tv  +\naba q =(b_0\cdot\nabla)^2\eta  &\text{in } \Omega,\\
 \diva v = 0 &\text{in  }\Omega,\\
 q=0 & \text{on  }\Gamma,\\
 (\eta,v)\mid_{t=0} =(\text{Id}, v_0).
 \end{cases}
\end{equation}
In the system \eqref{eq:mhd}, the initial magnetic field $b_0$ can be regarded as a parameter vector that satisfies
\begin{equation}
\label{bcond}
\diverge b_0=0 \text{ in }\Omega, \text{ and }b_0\cdot N=0\text{ on }\Gamma.
\end{equation}

\begin{remark}
It is possible to derive the a priori estimates for the original formulation \eqref{eq:mhdo}, however, as one will see, it is crucial for us to work with the equivalent reformulation \eqref{eq:mhd}, which enables us to construct approximate solutions that are asymptotically consistent with the a priori estimates.
\end{remark}

\subsection{Previous works}
 Free boundary problems in fluid mechanics have important physical background and have been studied intensively in the mathematical community. There are a huge amount of mathematical works, and we only mention briefly some of them below that are closely related to the present work,  that is, those of the incompressible Euler equations and the related ideal MHD models.

 For the incompressible Euler equations, the early works were focused on the irrotational fluids, which began
with the pioneering work of Nalimov \cite{N} of the local well-posedness for the small initial data and was generalized to the
general initial data by the breakthrough of Wu \cite{Wu1,Wu2} (see also Lannes \cite{Lannes}). For the irrotational inviscid fluids, certain dispersive effects can be used to establish the global well-posedness for the small initial data; we refer to  Wu \cite{Wu3,Wu4}, Germain, Masmoudi and Shatah \cite{GMS1, GMS2}, Ionescu and Pusateri \cite{IP,IP2} and Alazard
and Delort \cite{AD}. For the general incompressible Euler equations, the first local well-posedness in 3D was obtained by Lindblad \cite{Lindblad05} for the case without surface tension (see Christodoulou and Lindblad \cite{CL_00} for the a priori estimates) and
by Coutand and Shkoller \cite{CS07} for the case with (and without) surface tension. We also refer to the results of Shatah and Zeng \cite{SZ} and Zhang and Zhang
\cite{ZZ}. Recently, the well-posedness in conormal Sobolev spaces can be found by the the inviscid limit of the free-surface incompressible Navier-Stokes equations, see Masmoudi and Rousset \cite{MasRou} and  Wang and Xin \cite{Wang_15}.


The study of free boundary problems for the ideal MHD models seems far from being complete; it attracts many research interests, but up to now only few well-posedness theory for the nonlinear problem could be found. For the general plasma-vacuum interface model, the well-posedness
of the nonlinear compressible problem was recently proved in Secchi and Trakhinin \cite{Secchi_14} by
the Nash-Moser iteration based on the previous results on the linearized problem \cite{Trak_10,Secchi_13}. The well-posedness of the linearized incompressible problem was proved by Morando, Trakhinin and Trebeschi \cite{Mo_14}, but so far no well-posedness for the nonlinear problem can be found in the literature.  For the problem \eqref{mhd}--\eqref{mhd3}, under the assumption that the strength of the magnetic field is constant on the
free surface Hao and Luo \cite{Hao_13} proved the a priori estimates by adopting a geometrical point of view \cite{CL_00}, and this assumption was later removed by Hao \cite{Hao_16}. Finally, we also mention some works about the current-vortex sheet problem, which describes a velocity and magnet field discontinuity in two ideal MHD flows. The nonlinear stability of compressible current-vortex sheets was solved independently by Chen and Wang \cite{Chen_08} and Trakinin \cite{Trak_09} by using the Nash-Moser iteration. For incompressible current-vortex sheets, Coulombel, Morando, Secchi and Trebeschi \cite{CMST} proved an a priori estimate for the nonlinear problem under a strong stability condition, and  Sun, Wang and Zhang \cite{Sun_15} solved the nonlinear stability very recently.

\section{Main results}

\subsection{Statement of the result}
To avoid the use of local coordinate charts necessary for arbitrary geometries, for simplicity, we assume that the initial domain is given by
\begin{equation}
\label{domain}
\Omega=\mathbb{T}^2\times(0,1),
\end{equation}
where $\mathbb{T}^2$ denotes the 2-torus. This permits the use of one global Cartesian coordinate system. The boundary of $\Omega$ is then given by the horizontally flat bottom and top:
\begin{equation}
\Gamma =\mathbb{T}^2 \times (\{0\}\cup \{1\}).
\end{equation}
We denote $N$ by the outward unit normal vector of $\Gamma$:
\begin{equation}
N=e_3 \text{ when }x_3=1, \text{ and }N=-e_3\text{ when }x_3=0.
\end{equation}

Before stating our results of this paper, we may refer the readers to our notations and conveniences in Section \ref{nota sec}. As for the free-surface incompressible Euler equations, a Taylor sign condition is needed
to get the local well-posedness for \eqref{eq:mhd}.
Given the data $(v_0, b_0)\in H^4(\Omega)$ with $\Div v_0=\Div b_0=0$, we define the initial pressure function $q_0$ as the solution to the elliptic problem
\begin{equation}
\begin{cases}
-\Delta q_0=\partial_j v_{0i}\partial_i {v_0}_j-\partial_j b_{0i}\partial_i b_{0j}&\text{in }\Omega,\\
q_0=0&\text{on }\Gamma.
\end{cases}
\end{equation}
 Hence, we assume initially
\begin{equation}\label{taylor}
-\nabla q_0\cdot N\geq \lambda>0 \text{ on }\Gamma.
\end{equation}

We define the higher order energy functional
\begin{equation}
\label{edef}
\mathfrak{E}(t)=\norm{v}_4^2+\norm{\eta}_4^2+\norm{b_0\cdot\nabla \eta}_4^2+\abs{\bar\partial^4\eta\cdot n }_0^2.
\end{equation}
Then the main result in this paper is stated as follows.
\begin{theorem}\label{mainthm}
Suppose that the initial data $ v_0 \in H^4(\Omega)$ with $\Div v_0=0$ and $b_0 \in H^4(\Omega)$ satisfies \eqref{bcond} and that the Taylor sign condition \eqref{taylor} holds initially. Then there exists a $T_0>0$ and a unique solution $(v, q, \eta)$ to  \eqref{eq:mhd} on the time interval $[0, T_0]$ which satisfies
\begin{equation}\label{enesti}
\sup_{t \in [0,T_0]} \mathfrak E(t) \leq P\left(\norm{v_0}_4^2+\norm{b_0}_4^2\right),
\end{equation}
where $P$ is a generic polynomial.

\end{theorem}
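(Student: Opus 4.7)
The plan is to prove local well-posedness in three stages: (i) derive a priori estimates for the energy $\mathfrak{E}(t)$ defined in \eqref{edef}, (ii) construct approximate solutions whose estimates are uniform in a regularization parameter, and (iii) pass to the limit. Uniqueness will follow from a standard energy comparison applied to the difference of two solutions.

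The starting point for the a priori estimates is the basic $L^2$ identity obtained by testing the momentum equation against $v$. Integration by parts using $\diva v=0$, $q=0$ on $\Gamma$, and $b_0\cdot N=0$ (which ensures $b_0\cdot\nabla\eta$ remains tangential at $\Gamma$) yields
\[
\tfrac{1}{2}\tfrac{d}{dt}\bigl(\norm{v}_0^2+\norm{b_0\cdot\nabla\eta}_0^2\bigr)=0,
\]
because $\int v\cdot(b_0\cdot\nabla)^2\eta\,dx=-\int (b_0\cdot\nabla)v\cdot(b_0\cdot\nabla)\eta\,dx=-\tfrac{1}{2}\tfrac{d}{dt}\norm{b_0\cdot\nabla\eta}_0^2$. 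To upgrade this to control of $\mathfrak{E}$, I apply tangential derivatives $\bar\partial^{\alpha}$ and time derivatives $\partial_t^{k}$ with $|\alpha|+k\le 4$ to the momentum equation and repeat the argument. Commutators $[\bar\partial^{\alpha},\naba]q$ and the analogous expressions in the Lorentz term produce lower-order contributions bounded by $P(\mathfrak{E})$. Missing normal derivatives of $v$ are recovered via the constraint $\diva v=0$ together with the curl equation obtained by applying $\naba\times$ to the momentum equation, through a div--curl estimate; the pressure $q$ is controlled by elliptic estimates for the problem $-\da q=\partial_i^{\a}v_j\partial_j^{\a}v_i-\partial_i^{\a}((b_0\cdot\nabla)^2\eta_i)$ with $q|_\Gamma=0$.

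The essential step is the highest-order boundary term. Integrating the pressure contribution at order four by parts produces a surface integral $-\int_\Gamma\bar\partial^{4}q\,\bar\partial^{4}v\cdot n\,dS$; using $q|_\Gamma=0$ to write $\bar\partial^{4}q|_\Gamma=(\bar\partial^{4}\eta\cdot\nabla q)|_\Gamma+(\text{commutators})$ and $v=\partial_t\eta$, this reorganizes into
\[
\tfrac{1}{2}\tfrac{d}{dt}\int_\Gamma(-\nabla q\cdot n)\abs{\bar\partial^{4}\eta\cdot n}^{2}\,dS+(\text{controlled terms}).
\]
Since the Taylor sign condition \eqref{taylor} persists on a short time interval by continuity, this provides coercive control of $\abs{\bar\partial^{4}\eta\cdot n}_0^{2}$. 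Gathering all terms yields $\tfrac{d}{dt}\mathfrak{E}\le P(\mathfrak{E})$, so that $\mathfrak{E}$ remains bounded on some $[0,T_0]$.

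The main obstacle, foreshadowed by the remark following \eqref{eq:mhd}, is to construct approximate solutions with estimates \emph{asymptotically consistent} with those above. A natural route is a tangential smoothing: replace $\eta$ in the definition of $\a$ by a horizontally mollified flow map $\eta^\kappa$, obtaining a system of the form $\partial_t v^\kappa+\nabla_{\ak}q^\kappa=(b_0\cdot\nabla)^2\eta^\kappa$ with $\divak v^\kappa=0$ and $q^\kappa|_\Gamma=0$, coupled with a suitable update of the flow map. For fixed $\kappa>0$ this is a linear hyperbolic--elliptic system that can be solved by Picard iteration using the linearized theory; the crucial point is that the mollification contributes only lower-order commutator errors, so the energy identity for $\mathfrak{E}^\kappa$ mirrors the one above and yields a $\kappa$-uniform bound on the same time interval. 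Compactness (Aubin--Lions) then extracts a subsequence converging strongly enough to identify the limit as a solution of \eqref{eq:mhd}, and uniqueness follows from a lower-order energy comparison for the difference of two solutions that again exploits the Taylor-sign coercivity.
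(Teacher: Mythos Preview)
Your proposal has a genuine gap at the a priori estimate step, and a second one in the construction.

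\textbf{A priori estimates.} The assertion that the commutators $[\bar\partial^{4},\naba]q$ and the analogous Lorentz commutators are ``lower-order contributions bounded by $P(\mathfrak{E})$'' is not correct for this problem. The top-order commutator contains $\bar\partial^{4}\a_{ij}\partial_j q$, which by \eqref{partialF} involves $\bar\partial^{4}\nabla\eta$; after pairing with $\bar\partial^{4}v$ one is left with bulk terms of the schematic form $\int_\Omega \bar\partial^{4}\nabla\eta\,\bar\partial^{4}v$ and $\int_\Omega \bar\partial^{4}\nabla\eta\,\bar\partial^{4}q$, and a boundary remainder $\int_\Gamma(-\nabla q\cdot N)\,\a_{j3}\bar\partial^{4}\eta_j\,\a_{i\alpha}\partial_\alpha v_\ell\,\a_{\ell 3}\bar\partial^{4}\eta_i$. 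For the incompressible Euler equations these are harmless because one can prove $\eta\in H^{9/2}$ via the vorticity; for MHD this fails, since showing $\curl\eta\in H^{7/2}$ would require $b_0\cdot\nabla\eta\in H^{9/2}$, which is unavailable. The paper closes the estimates in $\mathfrak{E}$ (with only $\eta\in H^{4}$) by introducing Alinhac's good unknowns $\mathcal V=\bar\partial^{4}v-\bar\partial^{4}\eta^\kappa\cdot\nabla_{\a}v$ and $\mathcal Q=\bar\partial^{4}q-\bar\partial^{4}\eta^\kappa\cdot\nabla_{\a}q$; the identity \eqref{commf} shows that in the equations for $(\mathcal V,\mathcal Q)$ the dangerous $\bar\partial^{4}\nabla\eta$ terms cancel. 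Without this device your tangential energy estimate does not close.

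\textbf{Construction of approximate solutions.} Your description of the smoothing scheme is too optimistic on two counts that the paper addresses explicitly. First, replacing $\a(\eta)$ by $\a(\Lambda_\kappa^2\eta)$ and running the boundary energy identity produces two extra terms (the $\mathcal I_{2b}$ and $\mathcal I_{4b}$ of the paper) which cancel only at $\kappa=0$ and are not $\kappa$-uniformly bounded; the paper cures this by modifying the flow map to $\partial_t\eta=v+\psi^\kappa$ with a carefully chosen corrector $\psi^\kappa$ (see \eqref{etaaa}) whose role is precisely to kill those terms. Moreover, the curl/divergence estimates require control of $\norm{b_0\cdot\nabla\Lambda_\kappa^2\eta}_4$, which fails because the commutator $[\Lambda_\kappa^2,b_0\cdot\nabla]$ is bad in the normal direction; the paper therefore uses a boundary-only smoother $\eta^\kappa$ defined by \eqref{etadef}. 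Second, for fixed $\kappa$ the linearized problem is \emph{not} solvable by a straightforward Picard iteration: the forcing $(b_0\cdot\nabla)^2\eta$ is only in $H^{3}$, one derivative short of what is needed to update $v$ via $\partial_t v=-\nabla_{\tilde\a^\kappa}q+(b_0\cdot\nabla)^2\eta$. The paper introduces a second parabolic regularization $\partial_t\eta-\varepsilon(b_0\cdot\nabla)^2\eta=v+\psi^\kappa$ to gain the missing regularity, derives $\varepsilon$-independent bounds, and then sends $\varepsilon\to 0$ before iterating. Your sketch omits all three mechanisms, each of which is essential.
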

\subsection{Strategy of the proof}

The strategy of proving the local well-posedness for the inviscid free boundary problems consists of three main parts: the a priori estimates in certain energy functional spaces, a suitable approximate problem which is asymptotically consistent with the a priori estimates, and the construction of solutions to the approximate problem. Each part can be highly nontrivial, especially the second part. Our approach of constructing solutions to the incompressible MHD equations \eqref{eq:mhd} is motivated by the one for the incompressible Euler equations without surface tension; however, we will develop new ideas to overcome several new difficulties caused by the presence of the magnetic field as illustrated below.

In the usual derivation of the a priori tangential energy estimates of \eqref{eq:mhd} in the $H^4$ setting, one deduces
\begin{equation}
\begin{split}
&\hal\dfrac{d}{dt} \int_{\Omega}\abs{\bar\partial^4 v}^2+\abs{\bar\partial^4 ( b_0\cdot\nabla\eta)}^2+\int_{\Gamma}(-\nabla q\cdot N)\bar\partial^4{\eta}_j{\mathcal A}_{j3} q\bar\partial^4 v_i{\mathcal A}_{i3}
\\&\quad \approx \underbrace{\int_\Omega \bar\partial^4 \nabla\eta \bar\partial^4  \eta+ \bar\partial^4 \nabla\eta\bar\partial^4  q}_{\mathcal{R}_Q}+l.o.t.,
\end{split}
\end{equation}
where $l.o.t.$  denotes integrals consisting of lower-order terms. Since $\dt \eta=v$, one has
\begin{equation}\label{taylores}
\begin{split}
\int_{\Gamma}(-\nabla q\cdot N)\bar\partial^4{\eta}_j{\mathcal A}_{j3}\bar\partial^4 v_i{\mathcal A}_{i3}&=\dfrac{1}{2}\dfrac{d}{dt}\int_{\Gamma}(-\nabla q\cdot N)\abs{\bar\partial^4 \eta_i{\mathcal{A}}_{i3}}^2
\\&\quad+\underbrace{\int_{\Gamma}  (-\nabla q\cdot N) \a_{j3}\bp^4   \eta_j   \a_{im}\pa_m   v_\ell \a_{\ell3} \bar\partial^4  \eta_i}_{\widetilde{\mathcal{R}}_Q}+l.o.t..
\end{split}
\end{equation}
Under the Taylor sign condition, one gets that $\bar\partial^4 \eta\cdot n\in L^2(\Gamma)$ which is $1/2$ higher regular than $v$. For the incompressible Euler equations, through a careful study of the vorticity equation, it can be shown that $\curl\eta\in H^{4-1/2}(\Omega)$ which leads to $\eta\in H^{4+1/2}(\Omega)$ (and hence $q\in H^{4+1/2}(\Omega)$), and thus $\mathcal{R}_Q$ and $\widetilde{\mathcal{R}}_Q$ can be controlled, which in turn closes the a priori estimates in the energy functional of $\norm{v}_{4}^2+\norm{\eta}_{4+1/2}^2$, see \cite{CS07,DS_10}. However, for the incompressible MHD equations, we cannot prove that $\curl\eta\in H^{4-1/2}(\Omega)$ due to the presence of the magnetic field. Indeed, in the study of the vorticity equation,  if one wishes to show $\curl\eta\in H^{4-1/2}(\Omega)$, then one needs to estimate $\norm{b_0\cdot\nabla\eta}_{4+1/2}^2$ which is out of control since one does not have $\bar\partial^4 (b_0\cdot\nabla\eta)\cdot n\in L^2(\Gamma)$ unless $b_0=0$ on $\Gamma$. As a consequence, we can only hope to close the a priori estimates in the energy functional $\mathfrak{E}(t)$ defined by \eqref{edef}. However, this yields a loss of derivatives in estimating $\mathcal{R}_Q$ and $\widetilde{\mathcal{R}}_Q$. Our idea to overcome this difficulty is, motivated by \cite{MasRou,Wang_15}, to use Alinac's good unknowns $
\mathcal{V} =\bar\partial^4 v- \bp^4\eta \cdot\nabla_\a v$ and $ \mathcal{Q} =\bar\partial^4 q- \bp^4\eta \cdot\nabla_\a q$, which derives a crucial cancellation observed by Alinhac \cite{Alinhac}, i.e., when considering the equations for $\mathcal{V} $ and $Q $, the terms $\mathcal{R}_Q$ and $\widetilde{\mathcal{R}}_Q$ disappear. This allows us to close the a priori estimates of \eqref{eq:mhd} in $\mathfrak{E}(t)$.

It should be noted that \eqref{taylores} relies heavily on the geometric transport-type structure of the nonlinear problem, which is lost in the linearized problem (c.f. $\bp^4{\eta}$ stems from $\bp^4 \a$) and then causes losing derivatives on the boundary estimates in the construction of approximate solutions by the linearization iteration \cite{CL_00,Lindblad05}. Indeed, the construction of a sequence of approximate solutions which is asymptotically consistent with a priori estimates is usually highly nontrivial for inviscid free boundary problems. For the incompressible Euler equations, the authors \cite{CS07,DS_10} introduced the horizontal convolution mollifier $\Lambda_{\kappa}$ to construct the nonlinear $\kappa$-approximate problem by approximating $\a=\a(\eta)$ with $\a^\kappa=\a(\Lambda_\kappa^2\eta)$. Note that in this approximation the type of estimates \eqref{taylores} is retained, recalling our use of good unknowns,
\begin{align} \label{ine}
\int_{\Gamma}(-\nabla q\cdot N)\bar\partial^4{\Lambda_\kappa^2\eta}_j{\mathcal A}^\kappa_{j3}  \mathcal{V}_i{\mathcal A}^\kappa_{i3}&=\dfrac{1}{2}\dfrac{d}{dt}\int_{\Gamma}(-\nabla q\cdot N)\abs{\bar\partial^4 \Lambda_{\kappa} \eta_i\mathcal A^{\kappa}_{i3}}^2 \\&\quad+\int_{\Gamma}  (-\nabla q\cdot N) \a^\kappa_{j3}\bp^4 \Lambda_\kappa \eta_j   \a^{\kappa}_{i\alpha}\pa_\alpha \Lambda_{\kappa}^2 v_\ell \a^\kappa_{\ell3} \bar\partial^4  \Lambda_\kappa \eta_i \nonumber
\\&\quad+
\int_{\Gamma}   \nabla q\cdot N  \a^\kappa_{j3}\bp^4 \Lambda_\kappa^2 \eta_j   \a^{\kappa}_{i\alpha}\pa_\alpha   v_\ell \a^\kappa_{\ell3} \bar\partial^4  \Lambda_\kappa^2 \eta_i+l.o.t.. \nonumber
\end{align}
It should be pointed out that in the a priori estimates of the original problem \eqref{eq:mhd} (i.e., $\kappa=0$) the last two terms in \eqref{ine} are cancelled out, while when $\kappa>0$ we do not have the cancellation. And for the incompressible MHD equations, neither of these two terms can be controlled independently of $\kappa$. Our idea to overcome this difficulty is to approximate the flow map as \begin{equation}
\partial_t\eta=v+\fk,
\end{equation}
where $\fk$ is the {\it modification term}, which is defined by \eqref{etaaa} and tends to zero as $\kappa\rightarrow0$. The crucial point is  that in the estimates of \eqref{ine} we then get an additional term
\begin{equation}
\int_{\Gamma}(-\nabla q\cdot N)\bar\partial^4{\Lambda_\kappa^2\eta}_j{\mathcal A}^\kappa_{j3}\bar\partial^4 \psi^\kappa_i{\mathcal A}^\kappa_{i3},
\end{equation}
which eliminates those two troublesome terms, up to an error of $l.o.t.$ terms.
This can then finish the tangential energy estimates.  When doing the curl and divergence estimates, unfortunately, we will encounter another difficulty due to the presence of the magnetic field. More precisely, we will need to estimate $\norm{b_0\cdot\nabla  \Lambda_{\kappa}^2\eta}_4$, which is out of control due to the commutator term $\norm{[b_0\cdot\nabla, \Lambda_{\kappa}^2]\eta}_4$. This term is bad since in general $b_{03}$ only vanishes on $\Gamma$ but may not vanish in $\Omega$. Our way to overcome this difficulty is to approximate $\a=\a(\eta)$ in a slightly different way, i.e., by $\ak=\a(\eta^\kappa)$ where $\eta^\kappa$ is the {\it boundary smoother} of $\eta$ defined by \eqref{etadef}. The advantage of this approximation is that it allows the estimates of $\norm{b_0\cdot\nabla  \eta^\kappa}_4$ (see \eqref{tes2}) and preserves the estimates \eqref{ine} since $\eta^{\kappa}=\Lambda_{\kappa}^2\eta$ on $\Gamma$. This then finishes the curl and divergence estimates. Finally, employing the Hodge-type elliptic estimates, we thus close $\kappa$-independent estimates for the nonlinear $\kappa$-approximate problem \eqref{approximate}.

What now remains in the proof of the local well-posedness of \eqref{eq:mhd} is to constructing solutions to the $\kappa$-approximate problem \eqref{approximate}, which is more feasible than the original problem since it avoids losing derivatives on the boundary estimates in the linearization iteration due to the boundary smoothing effect for each fixed $\kappa>0$. For the incompressible Euler equations \cite{CS07,DS_10}, the existence of solutions to the smoothed approximate problem is obtained by using simple transport-type arguments that rely on the pressure gaining one regularity for fixed $\kappa>0$ so that one can define $v$ by $\dt v=-\nabla_\ak q$ in the linearization iteration. However,  for the incompressible MHD equations, we are not able to define $v$ in such a  way since we only have a priori $(b_0\cdot\nabla)^2  \eta\in H^{3}(\Omega)$. Our idea to overcome this difficulty is to smooth $(b_0\cdot\nabla)^2  \eta$ in a proper way. We find that the right choice for us is to further approximate the flow map as
\begin{equation}
\partial_t\eta-\varepsilon (b_0\cdot\nabla)^2\eta =v+\psi^\kappa.
\end{equation}
For fixed $\varepsilon>0$, we have a priori $(b_0\cdot\nabla)^2  \eta\in  L^2(0,T;H^{4}(\Omega))$, which is just sufficient for us to define $v$ by $\dt v=-\nabla_{\tilde\a} q+(b_0\cdot\nabla)^2  \eta$. We find that it will be a bit more convenient to employ this $\varepsilon$-approximation in the linearization. Hence, our construction of solutions to the nonlinear $\kappa$-approximate problem \eqref{approximate} is as follows. We introduce the linearized $\kappa$-approximate problem \eqref{lvapproximate} and employ the second level of approximation by considering the linearized $\varepsilon$-$\kappa$-approximate problem \eqref{epsapproximate}. We first show the existence of solutions to \eqref{epsapproximate} for each $\varepsilon>0$ ($\kappa>0$ is fixed!) by a somewhat standard argument. Then we derive the $\varepsilon$-independent estimates, in a similar way as that for \eqref{approximate}, so as to pass to the limit as $\varepsilon\rightarrow0$ to show the existence of solutions to \eqref{lvapproximate}. Then by the linearization iteration and a contraction argument, we produce solutions to the nonlinear $\kappa$-approximate problem \eqref{approximate}. Consequently, the construction of solutions to the incompressible MHD equations \eqref{eq:mhd} is completed.

\section{Preliminary}

%

\subsection{Notation}\label{nota sec}

Einstein's summation convention is used throughout the paper, and repeated
Latin indices $i,j,$ etc., are summed from 1 to 3, and repeated Greek indices
$\alpha,\beta$, etc., are summed from 1 to 2. We will use the Levi-Civita symbol
\begin{equation}\nonumber
\epsilon_{ij\ell}=\left\{\begin{aligned}
1 &,\,\,\text{even permutation of}\,\, \{1,2,3\},\\
-1 &,\,\,\text{odd permutation of}\,\, \{1,2,3\},\\
0 &,\,\,\text{otherwise.} \\
\end{aligned}\right.
\end{equation}

We work with the usual $L^p$ and Sobolev spaces $W^{m,p}$ and $H^m=W^{m,2}$ on both the domain $\Omega$ and the boundary $\Gamma$. For notational simplifications, we denote the norms of these spaces defined on $\Omega$ by $\norm{\cdot}_{L^p}, \norm{\cdot}_{W^{m,p}}$ and $\norm{\cdot}_{m}$, and the norms of these spaces defined on $\Gamma$ by $\abs{\cdot}_{L^p}, \abs{\cdot}_{W^{m,p}}$ and $\abs{\cdot}_{m}$. We also use $\norm{\cdot}_{L^p_TH^m}$ to denote the norm of the space $L^p([0,T];H^m(\Omega))$.

We use $D$ to denote the spatial derives, $\bar{\partial}$ to denote the tangential derivatives, $\Delta$ to denote the Laplacian on $\Omega$ and $\Delta_{\ast}$ to denote the Laplacian on $\Gamma$. We also use $\int_{\Omega} f$, $\int_{\Gamma} f$ as the integrals abbreviation of $\int_{\Omega}f\,dx$, $\int_{\Gamma}f\,dx_{\ast}$, with $x_\ast=(x_1,x_2)$.

We use $C$ to denote generic constants, which only depends on the domain $\Omega$ and the boundary $\Gamma$, and  use $f\ls g$ to denote $f\leq Cg$.  We use $P$ to denote a generic polynomial function of its arguments, and the polynomial coefficients are generic constants $C$.


\subsection{Product and commutator estimates}

We recall the following product and commutator estimates.
\begin{lemma}
It holds that

\noindent $(i)$ For $|\alpha|=k\geq 0$,
\begin{equation}
\label{co0}
\norm{D^{\alpha}(gh)}_0 \ls \norm{g}_{k}\norm{h}_{[\frac{k}{2}]+2}+\norm{g}_{[\frac{k}{2}]+2}\norm{h}_{k}.
\end{equation}
$(ii)$ For $|\alpha|=k\geq 1$, we define the commutator
\begin{equation}
[D^{\alpha}, g]h = D^{\alpha}(gh)-gD^{\alpha} h.
\end{equation}
Then we have
\begin{align}
\label{co1}
&\norm{[D^{\alpha}, g]h}_0\ls\norm{Dg}_{k-1}\norm{h}_{[\frac{k-1}{2}]+2}+\norm{Dg}_{[\frac{k-1}{2}]+2}\norm{h}_{k-1} ,\\
\label{co3}
&\abs{[D^{\alpha}, g]h}_0\ls\abs{Dg}_{k-1}\abs{h}_{[\frac{k-1}{2}]+\frac{3}{2}}+\abs{Dg}_{[\frac{k-1}{2}]+\frac{3}{2}}\abs{h}_{k-1} .
\end{align}
$(iii)$ For $|\alpha|=k\geq 2$, we define the symmetric commutator
\begin{equation}
\left[D^{\alpha}, g, h\right] = D^{\alpha}(gh)-D^{\alpha}g h-gD^{\alpha} h.
\end{equation}
Then we have
\begin{equation}
\label{co2}
\norm{\left[D^{\alpha}, g, h\right]}_0\ls\norm{Dg}_{k-2}\norm{Dh}_{[\frac{k-2}{2}]+2}+ \norm{Dg}_{[\frac{k-2}{2}]+2}\norm{Dh}_{k-2} .
\end{equation}
\end{lemma}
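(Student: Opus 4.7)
The three bounds are Kato--Ponce / Moser-type inequalities, and my plan is to prove (i) directly and then deduce (ii) and (iii) from (i) by a simple reindexing.

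\medskip

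For part (i), I would expand via Leibniz,
\begin{equation*}
D^{\alpha}(gh)=\sum_{\beta+\gamma=\alpha}c_{\beta\gamma}\,D^{\beta}g\,D^{\gamma}h,
\end{equation*}
and estimate each term in $L^2(\Omega)$ by H\"older's inequality paired with the Sobolev embedding $H^{2}(\Omega)\hookrightarrow L^{\infty}(\Omega)$ (recalling $\dim\Omega=3$). Split according to whether $|\beta|\leq[k/2]$ or $|\beta|\geq[k/2]+1$: in the first case use $\|D^{\beta}g\|_{L^{\infty}}\lesssim\|g\|_{|\beta|+2}\leq\|g\|_{[k/2]+2}$ together with $\|D^{\gamma}h\|_{L^{2}}\leq\|h\|_{k}$; in the second case, $|\gamma|\leq[k/2]$ so the roles of $g$ and $h$ swap and we use $\|D^{\gamma}h\|_{L^{\infty}}\lesssim\|h\|_{[k/2]+2}$ and $\|D^{\beta}g\|_{L^{2}}\leq\|g\|_{k}$. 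Summing the finitely many terms gives \eqref{co0}.

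\medskip

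For (ii), the key observation is that $[D^{\alpha},g]h=\sum_{|\beta|\geq 1}c_{\beta\gamma}\,D^{\beta}g\,D^{\gamma}h$ because the only missing Leibniz term is precisely $g\,D^{\alpha}h$. Since each surviving $\beta$ is nonzero, I pick an index $i$ with $\beta_{i}\geq 1$, write $\beta=\beta'+e_{i}$, and regard the sum as a Leibniz-type expansion of order $k-1$ applied to $Dg$ and $h$. The interior bound \eqref{co1} is then an immediate application of (i) with the substitutions $g\mapsto Dg$ and $k\mapsto k-1$. For the trace version \eqref{co3}, I repeat the same argument on $\Gamma$ but using the two-dimensional Sobolev embedding $H^{s}(\Gamma)\hookrightarrow L^{\infty}(\Gamma)$ valid for $s>1$; taking $s=|\beta'|+3/2\geq 3/2$ in place of $|\beta|+2$ is exactly what forces the shift from $[(k-1)/2]+2$ to $[(k-1)/2]+3/2$ in the boundary version. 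For (iii), the symmetric commutator cancels \emph{both} endpoint terms, so $[D^{\alpha},g,h]=\sum_{1\leq|\beta|\leq k-1}c_{\beta\gamma}\,D^{\beta}g\,D^{\gamma}h$, and now one can extract one derivative from each factor, writing $\beta=\beta'+e_{i}$ and $\alpha-\beta=\gamma'+e_{j}$ with $|\beta'|+|\gamma'|=k-2$. Applying (i) with $(g,h,k)\mapsto(Dg,Dh,k-2)$ yields \eqref{co2}.

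\medskip

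The argument is essentially bookkeeping, so no single step is genuinely hard. The only place that requires a moment of care is the trace bound \eqref{co3}, since in two dimensions the critical Sobolev exponent is $s=1$ for $L^{\infty}$ embedding; using $s=|\beta'|+3/2$ is safely above this threshold and accounts for the half-derivative shift relative to \eqref{co1}. Everything else is a clean reduction of (ii) and (iii) to (i).
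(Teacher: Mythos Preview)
Your proposal is correct and matches the paper's approach exactly: the paper's proof is a one-sentence sketch saying to expand by Leibniz and put the lower-order factor in $L^\infty\subset H^2$ (or $L^\infty\subset H^{3/2}$ on the boundary) and the higher-order factor in $L^2$, which is precisely what you do, with (ii) and (iii) reduced to (i) by peeling derivatives off the nonzero-order factors. Your write-up is in fact more detailed than the paper's, which simply refers to Lemma~A.1 of \cite{Wang_15} for the bookkeeping.
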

\begin{proof}
The proof of these estimates is standard; we first use the Leibniz formula to expand these terms as sums of products and then control the $L^2$ norm of each product with the lower order derivative term in $L^\infty\subset H^2 $ (or $L^\infty\subset H^{3/2} $) and the higher order derivative term in $L^2$. See for instance Lemma A.1 of \cite{Wang_15}.
\end{proof}

We will also use the following lemma.
\begin{lemma}
It holds that
\begin{equation}
\label{co123}
\abs{gh}_{1/2} \ls \abs{g}_{W^{1,\infty}}\abs{h}_{1/2}.
\end{equation}
 \end{lemma}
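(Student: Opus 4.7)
The plan is to prove the estimate $\abs{gh}_{1/2}\ls\abs{g}_{W^{1,\infty}}\abs{h}_{1/2}$ on the boundary $\Gamma=\mathbb{T}^2\times(\{0\}\cup\{1\})$ by a standard interpolation argument between integer-order endpoints. Since $\Gamma$ is a compact two-dimensional manifold without boundary, I may safely use the K-method (or complex interpolation) identification $H^{1/2}(\Gamma)=[L^2(\Gamma),H^1(\Gamma)]_{1/2}$.

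First I would verify the two endpoint multiplier bounds. At the $L^2$ level the estimate is immediate:
\begin{equation*}
\abs{gh}_{L^2}\leq \abs{g}_{L^\infty}\abs{h}_{L^2}\leq \abs{g}_{W^{1,\infty}}\abs{h}_{L^2}.
\end{equation*}
At the $H^1$ level, applying $\bar\partial$ to $gh$ and using the Leibniz rule,
\begin{equation*}
\abs{gh}_{1}\leq \abs{gh}_{L^2}+\abs{\bar\partial(gh)}_{L^2}\leq \abs{g}_{L^\infty}\abs{h}_{1}+\abs{\bar\partial g}_{L^\infty}\abs{h}_{L^2}\ls \abs{g}_{W^{1,\infty}}\abs{h}_{1}.
\end{equation*}
Thus the linear map $T_g:h\mapsto gh$ is bounded on both $L^2(\Gamma)$ and $H^1(\Gamma)$, with operator norms controlled by $\abs{g}_{W^{1,\infty}}$.

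Applying the interpolation theorem with $\theta=1/2$, the operator $T_g$ is bounded on the interpolation space $H^{1/2}(\Gamma)$ with operator norm bounded, up to a universal constant depending only on $\Gamma$, by the geometric mean
\begin{equation*}
\|T_g\|_{L^2\to L^2}^{1/2}\cdot\|T_g\|_{H^1\to H^1}^{1/2}\ls \abs{g}_{W^{1,\infty}},
\end{equation*}
which yields precisely \eqref{co123}. The main (and essentially only) obstacle is guaranteeing a clean interpolation identity on $\Gamma$; since $\Gamma$ is just $\mathbb{T}^2\sqcup\mathbb{T}^2$, Fourier methods trivially supply the identification $H^{1/2}=[L^2,H^1]_{1/2}$, so no geometric subtlety arises.

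As a backup, should one wish to avoid invoking abstract interpolation, the same conclusion follows directly from the Gagliardo--Slobodeckij seminorm: writing $g(x)h(x)-g(y)h(y)=g(x)(h(x)-h(y))+(g(x)-g(y))h(y)$ and inserting into the double integral defining $\abs{\cdot}_{1/2}^2$, the first piece contributes at most $\abs{g}_{L^\infty}^2\abs{h}_{1/2}^2$, while for the second piece one uses $|g(x)-g(y)|\leq \abs{\bar\partial g}_{L^\infty}|x-y|$ together with compactness of $\Gamma$ to bound the remainder by $\abs{\bar\partial g}_{L^\infty}^2\abs{h}_{L^2}^2\ls \abs{\bar\partial g}_{L^\infty}^2\abs{h}_{1/2}^2$. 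Summing yields \eqref{co123}.
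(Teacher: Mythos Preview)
Your proof is correct and follows essentially the same route as the paper: verify the multiplier bound $\abs{gh}_{s}\ls\abs{g}_{W^{1,\infty}}\abs{h}_{s}$ at $s=0$ and $s=1$, then interpolate to $s=1/2$. Your added justification of the interpolation identity on $\Gamma=\mathbb{T}^2\sqcup\mathbb{T}^2$ and the alternative Gagliardo--Slobodeckij computation are more detailed than what the paper records, but the core argument is the same.
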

\begin{proof}
It is direct to check that $\abs{gh}_{s} \ls \abs{g}_{W^{1,\infty}}\abs{h}_{s}$ for $s=0,1$. Then the estimate \eqref{co123} follows by the interpolation.
\end{proof}

\subsection{Hodge decomposition elliptic estimates}

Our derivation of high order energy estimates is based on the following Hodge-type elliptic estimates.
\begin{lemma}
\label{hodge}
Let $s\ge 1$, then it holds that
\begin{equation}
\norm{\omega}_s\ls \norm{\omega}_0+\norm{\curl \omega}_{s-1}+\norm{\Div \omega}_{s-1}+\abs{\bar{\partial}\omega \cdot N}_{s-3/2} .\label{hodd}
\end{equation}
\end{lemma}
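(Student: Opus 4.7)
The plan is to prove \eqref{hodd} by induction on integer $s\ge 1$ (with interpolation to cover fractional $s$), the base case being a sharp integration-by-parts identity and the induction step reducing normal derivatives to tangential ones via the algebraic content of $\curl$ and $\Div$.

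For the base case $s=1$, I would start from the pointwise identity $\partial_i\omega_j\,\partial_i\omega_j-\partial_i\omega_j\,\partial_j\omega_i=|\curl\omega|^2-(\Div\omega)^2$ and integrate, using integration by parts on the cross term, to derive
\[
\int_\Omega|\nabla\omega|^2=\int_\Omega\bigl(|\curl\omega|^2+|\Div\omega|^2\bigr)+\int_\Gamma\bigl(\omega_j N_i\partial_j\omega_i-(\omega\cdot N)\,\Div\omega\bigr).
\]
Because $N=\pm e_3$ is constant on each component of $\Gamma$, the boundary integrand collapses to $\omega_\alpha\partial_\alpha\omega_3-\omega_3\partial_\alpha\omega_\alpha$; one tangential integration by parts (using that $\mathbb T^2$ is closed) converts it to $\pm 2\,\omega_\alpha(\bar\partial\omega\cdot N)_\alpha$. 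Pairing in $H^{1/2}(\Gamma)\times H^{-1/2}(\Gamma)$ and invoking the trace inequality $\abs{\omega}_{1/2}\ls\norm{\omega}_1$, together with Young's inequality, bounds the boundary term by $\varepsilon\norm{\omega}_1^2+C\abs{\bar\partial\omega\cdot N}_{-1/2}^2$; absorbing the small term via $\norm{\omega}_1^2\ls\norm{\omega}_0^2+\norm{\nabla\omega}_0^2$ yields the $s=1$ case.

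For the induction step, assuming \eqref{hodd} for $s-1$, I would first control the tangential contribution $\norm{\bar\partial^{s-1}\omega}_1$ by applying the $s=1$ estimate to $\bar\partial^{s-1}\omega$; since tangential derivatives commute with $\curl$ and $\Div$, this produces precisely the right-hand side of \eqref{hodd}, with $\norm{\bar\partial^{s-1}\omega}_0\ls\norm{\omega}_{s-1}$ handled by the induction hypothesis. For any $s$-th order derivative of $\omega$ containing at least one normal $\partial_3$, I would iterate the pointwise algebraic relations
\[
\partial_3\omega_1=(\curl\omega)_2+\partial_1\omega_3,\quad \partial_3\omega_2=-(\curl\omega)_1+\partial_2\omega_3,\quad \partial_3\omega_3=\Div\omega-\partial_1\omega_1-\partial_2\omega_2,
\]
so as to express every $\partial_3^k D^{s-k}\omega$ with $k\ge 1$ as a linear combination of derivatives of $\curl\omega$ and $\Div\omega$ of order at most $s-1$ plus tangential derivatives of $\omega$ of order $\le s$; both families are already controlled by the right-hand side of \eqref{hodd}. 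Summing over multi-indices, and invoking real interpolation between the integer cases for non-integer $s$, completes the argument.

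The main obstacle I anticipate is keeping the boundary term in the base case at the level of the weak norm $\abs{\bar\partial\omega\cdot N}_{-1/2}$ rather than the full $\abs{\bar\partial\omega}_{-1/2}$. This depends crucially on the flat geometry: because $N$ is locally constant, the two a priori independent boundary contributions $\omega_j N_i\partial_j\omega_i$ and $(\omega\cdot N)\Div\omega$ combine, after one tangential integration by parts on the closed surface $\mathbb T^2$, into a single pairing between $\omega|_\Gamma$ and $\bar\partial\omega\cdot N$. On a curved surface the second fundamental form would enter and force extra boundary curvature terms, but in the present flat setting this simplification is exactly what makes \eqref{hodd} sharp.
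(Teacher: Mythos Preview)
Your argument is correct in substance and furnishes a genuinely different, more self-contained route than the paper's. The paper does not actually prove the lemma: it just records the vector identity $-\Delta\omega=\curl\curl\omega-\nabla\Div\omega$ and defers to Section~5.9 of Taylor's PDE text for the ensuing elliptic regularity. In contrast, your proof is constructive: the $s=1$ case is an explicit Rellich--Ne\v{c}as type integration-by-parts that isolates the boundary pairing with $\bar\partial\omega\cdot N$, and the induction step is the purely algebraic reduction of $\partial_3$ to $\curl$, $\Div$, and tangential derivatives. The benefit of your approach is that it makes transparent \emph{why} only the normal trace $\bar\partial\omega\cdot N$ (and not the full tangential trace) enters, and exactly where the flatness of $\Gamma$ is used; the paper's approach hides this inside the cited reference but is shorter and automatically covers more general $s$ and curved boundaries (at the cost of curvature terms).

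One small correction: your stated pointwise identity $\partial_i\omega_j\,\partial_i\omega_j-\partial_i\omega_j\,\partial_j\omega_i=|\curl\omega|^2-(\Div\omega)^2$ is not right as written; pointwise one only has $\partial_i\omega_j\,\partial_i\omega_j-\partial_i\omega_j\,\partial_j\omega_i=|\curl\omega|^2$. The $(\Div\omega)^2$ term appears only after the two integrations by parts you subsequently perform on $\int_\Omega\partial_i\omega_j\,\partial_j\omega_i$, which is exactly the computation that yields your (correct) integrated identity in the next display. This is a typo-level issue and does not affect the remainder of your argument.
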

\begin{proof}
The estimates are well-known and follow from the identity $-\Delta\omega=\curl \curl \omega-\nabla\Div \omega$. We refer the reader to Section 5.9 of \cite{Taylor}.
\end{proof}

\subsection{Normal trace estimates}

For our use of the above Hodge-type elliptic estimates, we also need the following normal trace estimates.
\begin{lemma}\label{normal trace}
It holds that
\begin{equation}\label{gga}
\abs{\bar{\partial}\omega \cdot N}_{-1/2}\ls \norm{\bar{\partial}\omega}_0+\norm{\Div\omega}_0
\end{equation}
\begin{proof}
Let $\phi$ be a scalar function in $H^{1/2}(\Gamma)$, and let $\tilde \phi\in H^1(\Omega)$ be a bounded extension. Then
\begin{equation}\int_\Gamma \bp \omega \cdot N \phi=\int_\Omega \Div( \bp \omega  \tilde\phi)=\int_\Omega   \bp \omega \nabla \tilde\phi - \int_\Omega \Div  \omega \bp \tilde\phi  \ls \left(\norm{\bar{\partial}\omega}_0+\norm{\Div\omega}_0\right) {\abs{\phi}_{1/2}}.
\end{equation}
The estimate \eqref{gga} then follows.
\end{proof}
\end{lemma}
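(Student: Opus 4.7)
The plan is to estimate the negative-order boundary norm $\abs{\bp\omega\cdot N}_{-1/2}$ by duality, testing against an arbitrary $\phi \in H^{1/2}(\Gamma)$ and relocating the pairing into the bulk via an $H^1$ extension. The crucial structural point is that $\Gamma$ consists of flat pieces with normal $\pm e_3$, so the tangential derivative $\bp$ (namely $\pa_1$ or $\pa_2$) is parallel to $\Gamma$; therefore tangential integration by parts on $\Omega$ yields no boundary contribution, and moreover $\bp$ commutes with $\Div$.

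Concretely, I would fix $\phi \in H^{1/2}(\Gamma)$ and choose a bounded extension $\tilde\phi \in H^1(\Omega)$ with $\norm{\tilde\phi}_1 \ls \abs{\phi}_{1/2}$. The divergence theorem gives
\[
\int_\Gamma (\bp\omega\cdot N)\phi = \int_\Omega \Div(\bp\omega\,\tilde\phi) = \int_\Omega \bp\omega\cdot\nabla\tilde\phi + \int_\Omega \bp(\Div\omega)\,\tilde\phi,
\]
where I have used $\Div(\bp\omega)=\bp(\Div\omega)$ in the last rewriting. The rightmost term is then integrated by parts in the tangential variable, picking up no boundary term since $\bp$ is tangent to $\Gamma$, so that
\[
\int_\Gamma (\bp\omega\cdot N)\phi = \int_\Omega \bp\omega\cdot\nabla\tilde\phi - \int_\Omega (\Div\omega)\,\bp\tilde\phi.
\]
A Cauchy--Schwarz bound on both bulk integrals, combined with $\norm{\nabla\tilde\phi}_0,\ \norm{\bp\tilde\phi}_0 \leq \norm{\tilde\phi}_1 \ls \abs{\phi}_{1/2}$, produces the desired inequality after supremizing over $\phi$ in the unit ball of $H^{1/2}(\Gamma)$ in the duality definition of the $H^{-1/2}(\Gamma)$ norm.

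I do not expect any substantive obstacle; the argument is a textbook normal-trace-by-duality computation, and the two essential ingredients, namely the existence of a bounded extension operator $H^{1/2}(\Gamma)\to H^1(\Omega)$ and the commutator identity $\Div\,\bp=\bp\,\Div$, are both immediate in this geometry. The only point worth flagging explicitly is the appeal to the tangentiality of $\bp$ when integrating the second term by parts: on a curved piece of $\Gamma$ one would pay an additional curvature-type contribution involving $\omega$ itself, which would force a low-order $\norm{\omega}_0$ term on the right-hand side. In the present flat slab $\Omega=\mathbb{T}^2\times(0,1)$ this is avoided, and the clean estimate \eqref{gga} results.
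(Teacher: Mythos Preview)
Your proposal is correct and follows exactly the same approach as the paper: extend $\phi$ to $\tilde\phi\in H^1(\Omega)$, apply the divergence theorem, use $\Div\bp=\bp\Div$ together with a tangential integration by parts (which produces no boundary term on the flat slab), and conclude by Cauchy--Schwarz. Your write-up is in fact more detailed than the paper's, since you make explicit the role of the flat geometry in suppressing extra boundary terms.
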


\subsection{Horizontal convolution-by-layers and commutation estimates}

As \cite{CS07,DS_10}, we will use the operation of horizontal convolution-by-layers which is defined as follows. Let $0\le \rho(x_\ast)\in C_0^{\infty}(\bR^2)$ be a standard mollifier such that $\text{spt}(\rho)=\overline{B(0,1)}$ and $\int_{\bR^2} \rho \,dx_{\ast}=1$, with corresponding dilated function $\rho_{\kappa}(x_\ast)=\frac{1}{\kappa^2}\rho(\frac{x_\ast}{\kappa}), \kappa>0$. We then define
\begin{equation}\label{lambdakdef}
\Lambda_{\kappa}g(x_{\ast},x_3)=\int_{\bR^2}\rho_{\kappa}(x_{\ast}-y_{\ast})g(y_{\ast},x_3)\,dy_{\ast}.
\end{equation}
By standard properties of convolution, the following estimates hold:
\begin{align}
&\abs{\Lambda_{\kappa}h}_s\ls \abs{h}_s,\quad  s\ge 0, \label{test3}\\
&\abs{\bar\partial\Lambda_{\kappa}h}_0\ls \dfrac{1}{\kappa^s}\abs{h}_s,\quad 0\le s\le 1.\label{loss}
\end{align}

The following commutator estimates play an important role in the boundary estimates.
\begin{lemma}\label{comm11}
For $\kappa>0$, we define the commutator
\begin{equation}
\left[\Lambda_{\kappa}, h\right]g\equiv \Lambda_{\kappa}(h g)-h\Lambda_{\kappa}g.
\end{equation}
Then we have
\begin{align}
&\abs{[\Lambda_{\kappa}, h]g}_0\ls  \abs{h}_{L^\infty}|g|_0,\label{es0-0}\\
&\abs{[\Lambda_{\kappa}, h]\bar\partial g}_0\ls \abs{h}_{W^{1,\infty}}|g|_0,\label{es1-0}\\
&\abs{[\Lambda_{\kappa}, h]\bar\partial g}_{1/2}\ls \abs{h}_{W^{1,\infty}}\abs{g}_{1/2}.\label{es1-1/2}
\end{align}
\end{lemma}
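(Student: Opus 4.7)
The plan is to prove the three estimates in order, reducing \eqref{es1-1/2} to the $L^2$ and $H^1$ cases by interpolation. Estimate \eqref{es0-0} is immediate: Young's inequality for the horizontal convolution gives $\abs{\Lambda_\kappa f}_0 \leq \abs{f}_0$, so the triangle inequality applied to $[\Lambda_\kappa, h]g = \Lambda_\kappa(hg) - h\Lambda_\kappa g$ together with the pointwise bound $\abs{hg}\leq\abs{h}_{L^\infty}\abs{g}$ closes the estimate at once.

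For \eqref{es1-0} I would invoke the classical Friedrichs commutator trick. Write the commutator explicitly as
\[
([\Lambda_\kappa, h]\bar\partial g)(x_\ast,x_3) = \int_{\bR^2} \rho_\kappa(x_\ast - y_\ast)\bigl(h(y_\ast,x_3) - h(x_\ast,x_3)\bigr)\bar\partial_{y_\ast} g(y_\ast,x_3)\,dy_\ast
\]
and integrate the $\bar\partial_{y_\ast}$ by parts; the boundary term vanishes since $\rho$ has compact horizontal support. This produces $-\Lambda_\kappa(\bar\partial h\cdot g)$, whose $L^2$ norm is $\ls \abs{\bar\partial h}_{L^\infty}\abs{g}_0$, plus a remainder with kernel $(h(y_\ast) - h(x_\ast))\,\partial_{x_\ast}\rho_\kappa(x_\ast - y_\ast)$. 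On $\mathrm{supp}\,\rho_\kappa$ the mean value theorem gives $\abs{h(y_\ast) - h(x_\ast)} \leq \kappa\abs{\bar\partial h}_{L^\infty}$, while $\partial_{x_\ast}\rho_\kappa$ has $L^1$ mass of order $\kappa^{-1}$; hence the remainder is dominated by $\abs{\bar\partial h}_{L^\infty}$ times an $L^1$-bounded mollification kernel, and Young's inequality finishes the bound.

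For \eqref{es1-1/2} the plan is to interpolate in $g$ between the $L^2$ and $H^1$ endpoints. The $L^2$ endpoint is \eqref{es1-0}, while for the $H^1$ endpoint I exploit the fact that the horizontal derivative $\bar\partial$ commutes with $\Lambda_\kappa$, obtaining the identity
\[
\bar\partial\bigl([\Lambda_\kappa, h]\bar\partial g\bigr) = [\Lambda_\kappa, h]\bar\partial^2 g + [\Lambda_\kappa, \bar\partial h]\bar\partial g.
\]
The first summand is controlled by \eqref{es1-0} with $g\mapsto\bar\partial g$, and the second by \eqref{es0-0} with $h\mapsto\bar\partial h$ and $g\mapsto\bar\partial g$; both contributions are $\ls\abs{h}_{W^{1,\infty}}\abs{g}_1$. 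Real interpolation $H^{1/2}(\Gamma) = [L^2(\Gamma), H^1(\Gamma)]_{1/2,2}$ then yields \eqref{es1-1/2}. The main point requiring care is that the interpolation step must preserve $\kappa$-independence of the constants; this is automatic here because both endpoint bounds carry the same $\abs{h}_{W^{1,\infty}}$ prefactor and the interpolation functor preserves operator norms, so no $\kappa$-dependent blow-up can appear in the intermediate Sobolev scale.
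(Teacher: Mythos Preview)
Your proof is correct and follows essentially the same route as the paper: \eqref{es0-0} from the definition and Young's inequality, \eqref{es1-0} via the standard Friedrichs commutator argument (the paper simply cites Lemma~5.1 of \cite{DS_10}), and \eqref{es1-1/2} by the identical decomposition $\bar\partial\bigl([\Lambda_\kappa,h]\bar\partial g\bigr)=[\Lambda_\kappa,h]\bar\partial^2 g+[\Lambda_\kappa,\bar\partial h]\bar\partial g$ followed by interpolation between the $L^2$ and $H^1$ endpoints. Your explicit write-up of the Friedrichs step and the remark on $\kappa$-independence of the interpolated constant are more detailed than the paper's presentation, but the argument is the same.
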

\begin{proof}
The first estimate follows from the definition \eqref{lambdakdef}. The estimate \eqref{es1-0} can be found in Lemma 5.1 of \cite{DS_10}. To prove the last estimate, we note that
\begin{align}
\nonumber\bar\partial ([\Lambda_{\kappa}, h]\bar\partial g)&=\bar\partial\left (\Lambda_{\kappa}(h\bar\partial g)-h\Lambda_{\kappa} \bar\partial g\right)\\&=\Lambda_{\kappa}(\bar \partial h\bar\partial g)+\Lambda_{\kappa}(h\bar\partial^2 g)-\bar\partial h\Lambda_{\kappa} \bar\partial g-h\Lambda_{\kappa}\bar\partial^2g =[\Lambda_{\kappa}, \bar\partial h]\bar\partial g+[\Lambda_{\kappa}, h]\bar\partial^2 g.
\end{align}
Then by \eqref{es0-0} and \eqref{es1-0}, we have
\begin{align}\label{lekldaf}
\nonumber \abs{\bar\partial ([\Lambda_{\kappa}, h]\bar\partial g)}_0&\leq \abs{[\Lambda_{\kappa}, \bar\partial h]\bar\partial g}_0+\abs{[\Lambda_{\kappa}, h]\bar\partial^2 g}_0\\ &\ls |\bar\partial h|_{L^{\infty}}\abs{\bar\partial g}_0+  \abs{h}_{W^{1,\infty}}\abs{\bar\partial g}_0\ls\abs{h}_{W^{1,\infty}}\abs{g}_1.
\end{align}
Consequently, the estimate \eqref{es1-1/2} follows from \eqref{es1-0} and \eqref{lekldaf} by the interpolation.
\end{proof}

\subsection{Geometric identities}
We recall some useful identities which can be checked directly.
For a nonsingular matrix $\mathcal F$, we have the following identities for differentiating its determinant $J$ and $\mathcal A=\mathcal F^{-T}$:
\begin{align}
\label{dJ}
&\partial J=\dfrac{\partial J}{\partial {\mathcal F}_{ij}}\partial {\mathcal F}_{ij} =  J{\mathcal{A}}_{ij}\partial {\mathcal F}_{ij},\\
&\partial {\mathcal{A}}_{ij}  = -{\mathcal{A}}_{i\ell}\partial {\mathcal F}_{m\ell}{\mathcal{A}}_{mj},
\label{partialF}
\end{align}
where $\partial$ can be $D$, $\bar{\partial}$ and $\partial_t$ operators. Moreover, we have the Piola identity
\begin{equation}\label{polia}
\partial_j\left(J{\mathcal{A}}_{ij}\right) =0.
\end{equation}

\section{Nonlinear $\kappa$-approximate problem}

Our goal of this section is to introduce our approximation of \eqref{eq:mhd} and then derive the uniform estimates for the approximate solutions.

\subsection{The nonlinear approximate $\kappa$-problem}

For $\kappa>0$, we consider the following sequence of approximate problems:
\begin{equation}\label{approximate}
\begin{cases}
\partial_t\eta =v+\fk &\text{in } \Omega,\\
\partial_tv  +\nabla_{\a^\kappa} q =(b_0\cdot\nabla)^2\eta  &\text{in } \Omega,\\
 \Div_{\a^\kappa} v = 0 &\text{in  }\Omega,\\
  q=0 & \text{on  }\Gamma,\\
 (\eta,v)\mid_{t=0} =(\text{Id}, v_0).
 \end{cases}
\end{equation}
Here the matrix $\mathcal A^{\kappa}=\a(\eta^\kappa)$ (and $J^{\kappa}$, etc.) with $\eta^\kappa$ the boundary smoother of $\eta$ defined as the solution to the following elliptic equation
\begin{equation}
\label{etadef}
\begin{cases}
-\Delta \eta^{\kappa}=-\Delta\eta &\text{in }\Omega,\\
\eta^{\kappa}=\Lambda_{\kappa}^2\eta &\text{on }\Gamma.
\end{cases}
\end{equation}
In the first equation of \eqref{approximate} we have introduced the modification term $\psi^{\kappa}=\psi^{\kappa}(\eta,v)$ as the solution to the following elliptic equation
\begin{equation}
\label{etaaa}
\begin{cases}
-\Delta \psi^{\kappa}=0&\text{in } \Omega,
\\  \psi^{\kappa}= \Delta_{*}^{-1} \mathbb{P}\left(\Delta_{*}\eta_{j}\a^{\kappa}_{j\alpha}\partial_{\alpha}{\Lambda_{\kappa}^2 v}-\Delta_{*}{\Lambda_{\kappa}^2\eta}_{j}\a^{\kappa}_{j\alpha}\partial_{\alpha} v\right) &\text{on }\Gamma,
\end{cases}
\end{equation}
where $\mathbb{P} f=f- \int_{\mathbb T^2}f$ and $\Delta_{*}^{-1}$ is the standard inverse of the Laplacian $\Delta_\ast$ on $\mathbb T^2$.

\begin{remark}
 Note that the modification term $\fk\rightarrow 0$ as $\kappa\rightarrow 0$. The introduction of $\fk$ is to eliminate two troublesome terms arising in the tangential energy estimates, which combinedly vanish as $\kappa\rightarrow 0$ but are out of control when $\kappa>0$.
\end{remark}

\begin{remark}
 Note that our definition of $\eta^{\kappa}$ is different from $\Lambda_{\kappa}^2\eta$ used in \cite{CS07,DS_10}, it only smooths $\eta$ on the boundary $\Gamma$ but not smooths horizontally in the interior $\Omega$. Though, it is sufficient to restore the nonlinear symmetric structure on the boundary in the tangential energy estimates since our $\eta^{\kappa}$ still equals to $\Lambda_{\kappa}^2\eta$ on $\Gamma$. On the other hand, our definition of $\eta^\kappa$ enables us to have the control of $\norm{b_0\cdot\nabla  \eta^{\kappa}}_4$ which arises in the curl and divergence estimates, while $\norm{b_0\cdot\nabla  \Lambda_{\kappa}^2\eta}_4$ is out of control due to the bad commutator term $\norm{[b_0\cdot\nabla, \Lambda_{\kappa}^2]\eta}_4$ since generally $b_{03}\neq 0$ and the commutator estimates \eqref{es1-0} and \eqref{es1-1/2} only work in tangential directions.
\end{remark}

\subsection{$\kappa$-independent energy estimates}\label{apest}

For each $\kappa>0$, we will show in Section \ref{exist} that there exists a time $T_\kappa>0$ depending on the initial data and $\kappa>0$ such that there is a unique solution $(v, q, \eta)=(v(\kappa), q(\kappa), \eta(\kappa))$ to \eqref{approximate} on the time interval $[0,T_\kappa]$. For notational simplification, we will not explicitly write the dependence of the solution on $\kappa$. The purpose of this section is to derive the $\kappa$-independent estimates of the solutions to \eqref{approximate}, which enables us to consider the limit of this sequence of solutions as $\kappa\rightarrow 0$.

We take the time $T_{\kappa}>0$  sufficiently small so that for $t\in[0,T_{\kappa}]$,
\begin{align}
\label{ini2}
&-\nabla q (t)\cdot N\geq \dfrac{\lambda}{2} \text{ on }\Gamma,\\
\label{inin3}&\abs{J^{\kappa}(t)-1}\leq \dfrac{1}{8} \text{ and } \abs{\a_{ij}^{\kappa}(t)-\delta_{ij}}\leq \dfrac{1}{8} \text{ in }\Omega.
\end{align}
We define the high order energy functional:
\begin{equation}
\mathfrak{E}^{\kappa} =\norm{v}_4^2+\norm{\eta}_4^2+\norm{b_0\cdot\nabla \eta}_4^2+\abs{\bar\partial^4\Lambda_{\kappa}\eta_i\a^{\kappa}_{i3}}_0^2.
\end{equation}
We will prove that $\mathfrak{E}^{\kappa}$ remains bounded on a time interval independent of $\kappa$, which is stated as the following theorem.

\begin{theorem} \label{th43}
There exists a time $T_1$ independent of $\kappa$ such that
\begin{equation}
\label{bound}
\sup_{[0,T_1]}\mathfrak{E}^{\kappa}(t)\leq 2M_0,
\end{equation}
where $M_0=P(\norm{v_0}_4^2+\norm{b_0}_4^2).$
\end{theorem}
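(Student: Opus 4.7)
The plan is to bound $\mathfrak{E}^\kappa$ on a $\kappa$-independent time interval by a polynomial differential inequality, combined with a standard continuity argument to preserve the bootstrap hypotheses \eqref{ini2}-\eqref{inin3}. The derivation splits into three stages: (I) fourth-order tangential energy estimates carried out with Alinhac-type good unknowns, which couple the bulk norms $\|\bp^4 v\|_0^2$, $\|\bp^4(b_0\cdot\nabla\eta)\|_0^2$ with the Taylor boundary term $|\bp^4\Lambda_\kappa\eta_i\,\ak_{i3}|_0^2$; (II) curl and divergence estimates for $v$, $\eta$, and $b_0\cdot\nabla\eta$; (III) recovery of the full $H^4$-norms through Lemma \ref{hodge}.

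For stage (I), apply $\bp^4$ (indeed $\bp^4\Lambda_\kappa$, as in \eqref{ine}) to the momentum equation and introduce the good unknowns
\begin{equation*}
\mathcal{V}=\bp^4 v-\bp^4\eta^\kappa\cdot\nabak v,\qquad \mathcal{Q}=\bp^4 q-\bp^4\eta^\kappa\cdot\nabak q,
\end{equation*}
so that $\bp^4\nabak q=\nabak\mathcal{Q}+l.o.t.$ Testing the ensuing equation against $\bp^4 v$ (or $\mathcal{V}$) and exploiting $\divak v=0$, the pressure pairing becomes a boundary integral which, upon invoking $q|_\Gamma=0$ and $\dt\eta=v+\fk$, yields $\hal\dtt\int_\Gamma(-\nabla q\cdot N)|\bp^4\Lambda_\kappa\eta_i\,\ak_{i3}|^2$ plus exactly the two residuals displayed in \eqref{ine}. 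By its definition \eqref{etaaa}, the modification term $\fk$ contributes on the boundary
\begin{equation*}
\int_\Gamma(-\nabla q\cdot N)\,\bp^4\Lambda_\kappa^2\eta_j\,\ak_{j3}\,\bp^4\fk_i\,\ak_{i3},
\end{equation*}
which cancels those two residuals modulo $l.o.t.$; this is the raison d'être for the mean-zero projector $\mathbb{P}$ and the inverse tangential Laplacian $\Delta_\ast^{-1}$ in \eqref{etaaa}. The magnetic source $(b_0\cdot\nabla)^2\eta$ is then integrated by parts in $b_0\cdot\nabla$; no boundary or divergence corrections arise since $\Div b_0=0$ in $\Omega$ and $b_0\cdot N=0$ on $\Gamma$, and combining with $b_0\cdot\nabla v=\dt(b_0\cdot\nabla\eta)-b_0\cdot\nabla\fk$ produces $-\hal\dtt\|\bp^4(b_0\cdot\nabla\eta)\|_0^2$ together with controllable commutators.

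For stage (II), applying $\curl_{\ak}$ to the momentum equation annihilates $\nabak q$ modulo a $\dt\ak$-commutator, giving a transport-type identity for $\curl_{\ak}v$ forced by $\curl_{\ak}((b_0\cdot\nabla)^2\eta)$; time-integration yields $\|\curl v\|_3$ and then, via $\dt\eta=v+\fk$, $\|\curl\eta\|_3$. Divergence is controlled from $\divak v=0$ rewritten as $\Div v=-(\ak-I)_{ij}\partial_j v_i$, for which \eqref{inin3} furnishes the necessary smallness. For $b_0\cdot\nabla\eta$, time-integration of $\dt(b_0\cdot\nabla\eta)=b_0\cdot\nabla v+b_0\cdot\nabla\fk$ governs the evolution, while the Hodge step needs the boundary trace $|\bar\partial(b_0\cdot\nabla\eta^\kappa)\cdot N|_{5/2}$: here the choice of $\eta^\kappa$ as the elliptic boundary smoother \eqref{etadef} (instead of $\Lambda_\kappa^2\eta$) is decisive, since $\|b_0\cdot\nabla\eta^\kappa\|_4$ then admits a $\kappa$-uniform bound through the $H^4$ elliptic estimate for \eqref{etadef}, whereas $[b_0\cdot\nabla,\Lambda_\kappa^2]\eta$ would lose a derivative in the normal direction, the commutator estimates \eqref{es0-0}-\eqref{es1-1/2} being tangential only. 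Feeding the outputs of (I)-(II) into Lemma \ref{hodge} closes $\|v\|_4$, $\|\eta\|_4$ and $\|b_0\cdot\nabla\eta\|_4$, and summing the resulting identities produces, uniformly in $\kappa$,
\begin{equation*}
\dtt\,\mathfrak{E}^\kappa(t)\le P\bigl(\mathfrak{E}^\kappa(t)\bigr).
\end{equation*}
A continuity argument then selects $T_1>0$ independent of $\kappa$ on which \eqref{bound} holds while \eqref{ini2}-\eqref{inin3} persist.

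The main obstacle is the orchestrated cancellation in stage (I): one must simultaneously verify that the good-unknown remainders remain $l.o.t.$ at the boundary (delicate because $q$ is formally one derivative above $\eta$ through the Taylor term), that \eqref{etaaa} provides exactly the cancelling contribution computed above, and that no $\kappa^{-1}$ loss from \eqref{loss} propagates -- secured by applying the commutator estimates \eqref{es0-0}-\eqref{es1-1/2} exclusively in tangential directions and by the careful $\mathbb{P}\Delta_\ast^{-1}$ pairing in \eqref{etaaa}. This coordinated mechanism, absent from the pure incompressible Euler case, is what distinguishes the present MHD construction.
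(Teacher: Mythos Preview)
Your proposal is correct and follows the same architecture as the paper: tangential energy estimates via Alinhac good unknowns with the $\fk$-induced boundary cancellation, curl and divergence estimates, Hodge recovery through Lemma~\ref{hodge}, and a continuity argument closing \eqref{ini2}--\eqref{inin3}. Two minor placements differ from the paper: $\|\eta\|_4$ is obtained there by direct transport from $\partial_t\eta=v+\fk$ rather than via Hodge, and the $\kappa$-uniform bound on $\|b_0\cdot\nabla\eta^\kappa\|_4$ (Lemma~\ref{preest}) is needed not for the Hodge boundary trace---which is supplied by the normal trace estimate of Lemma~\ref{normal trace} from interior quantities---but inside the curl and divergence estimates, where commutators such as $[\curl_{\ak},b_0\cdot\nabla]$ generate $b_0\cdot\nabla\ak$.
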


\subsubsection{Preliminary estimates of $\eta^{\kappa}$ and $\psi^{\kappa}$}

We begin our estimates with the boundary smoother $\eta^{\kappa}$ defined by \eqref{etadef} and the modification term $\psi^{\kappa}$ defined by \eqref{etaaa}.

\begin{lemma}
\label{preest}
The following estimates hold:
\begin{align}
\label{tes1}\norm{\eta^{\kappa}}_4&\ls \norm{\eta}_4,\\
\label{tes2}\norm{b_0\cdot\nabla\eta^{\kappa}}_4^2&\leq P(\norm{\eta}_4,\norm{b_0}_4,\norm{b_0\cdot\nabla\eta}_4),\\
\label{tes0}\norm{\partial_t\eta^{\kappa} }_4&\leq P(\norm{\eta}_4, \norm{v}_4),\\
\label{fest1}\norm{\fk}_{4}&\leq P(\norm{\eta}_4,\norm{v}_3),\\
\label{fest22}\norm{b_0\cdot\nabla \fk}_4&\leq P(\norm{\eta}_4,\norm{v}_4,\norm{b_0\cdot\nabla\eta}_4),\\
\label{fest2}\norm{\partial_t \fk}_4&\leq P(\norm{\eta}_4,\norm{v}_4,\norm{\partial_t v}_3).
\end{align}
\end{lemma}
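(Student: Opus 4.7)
The plan is to establish the six bounds by applying standard $H^4$ elliptic regularity to the Dirichlet problems \eqref{etadef} and \eqref{etaaa} defining $\eta^\kappa$ and $\fk$, and then controlling the resulting source and boundary data via product estimates, the trace theorem, the $H^s$-boundedness of $\Lambda_\kappa$ from \eqref{test3}, and commutator estimates in the spirit of Lemma \ref{comm11}. Since several bounds feed into later ones, I would carry them out in the order \eqref{tes1}, \eqref{fest1}, \eqref{tes0}, \eqref{tes2}, \eqref{fest22}, \eqref{fest2}.

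\smallskip

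\textbf{Estimates \eqref{tes1}, \eqref{fest1}, \eqref{tes0}.} The $H^4$ estimate for \eqref{etadef} gives $\|\eta^\kappa\|_4 \ls \|\Delta\eta\|_2 + |\Lambda_\kappa^2\eta|_{7/2}$, and by \eqref{test3} together with the trace theorem both terms are $\ls \|\eta\|_4$, which is \eqref{tes1}. Since $\fk$ is harmonic, $\|\fk\|_4 \ls |\fk|_{7/2} \ls |F|_{3/2}$, where $F$ denotes the bracketed quantity in the boundary condition of \eqref{etaaa}; because $\dim\Gamma=2$ and $3/2>1$, $H^{3/2}(\Gamma)$ is a Banach algebra, so the product rule applied to each summand of $F$ combined with the trace bounds $|\Delta_*\eta|_{3/2}\ls\|\eta\|_4$, $|\partial_\alpha v|_{3/2}\ls\|v\|_3$, and $|\ak|_{3/2}\ls P(\|\eta\|_4)$ (using \eqref{tes1}) yields \eqref{fest1}. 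Finally, time-differentiating \eqref{etadef} and using $\partial_t\eta=v+\fk$ together with $\Delta\fk=0$, the function $\partial_t\eta^\kappa$ solves $-\Delta(\partial_t\eta^\kappa)=-\Delta v$ with $\partial_t\eta^\kappa=\Lambda_\kappa^2(v+\fk)$ on $\Gamma$; elliptic regularity then bounds $\|\partial_t\eta^\kappa\|_4$ by $\|v\|_4+\|\fk\|_4$, which closes \eqref{tes0} via \eqref{fest1}.

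\smallskip

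\textbf{Estimate \eqref{tes2}.} This is the main obstacle. Setting $w=b_0\cdot\nabla\eta^\kappa$ and applying $b_0\cdot\nabla$ to \eqref{etadef}, $w$ solves a Dirichlet problem whose source
\[
-\Delta w = -\Delta(b_0\cdot\nabla\eta) + [\Delta,b_0\cdot\nabla](\eta-\eta^\kappa)
\]
has $H^2$ norm bounded by $\|b_0\cdot\nabla\eta\|_4 + P(\|b_0\|_4)(\|\eta\|_4+\|\eta^\kappa\|_4)$ by \eqref{co0} and \eqref{tes1}. The delicate part is the $H^{7/2}$ control of the boundary trace, for which the decisive use of $b_{03}|_\Gamma = 0$ from \eqref{bcond} reduces $b_0\cdot\nabla$ to the tangential operator $b_{0\alpha}\partial_\alpha$ on $\Gamma$, giving
\[
(b_0\cdot\nabla\Lambda_\kappa^2\eta)\big|_\Gamma \;=\; \Lambda_\kappa^2\bigl((b_0\cdot\nabla\eta)|_\Gamma\bigr) \;+\; [b_{0\alpha},\Lambda_\kappa^2](\partial_\alpha\eta)|_\Gamma.
\]
The smoothed term is bounded in $H^{7/2}$ by $\|b_0\cdot\nabla\eta\|_4$ via \eqref{test3} and trace, while the commutator term demands an $H^{7/2}$ analogue of Lemma \ref{comm11}, which I would obtain by iteratively applying the identity $\bar\partial([\Lambda_\kappa,h]\bar\partial g) = [\Lambda_\kappa,\bar\partial h]\bar\partial g + [\Lambda_\kappa,h]\bar\partial^2 g$ from the proof of \eqref{es1-1/2} and interpolating, producing $|[b_{0\alpha},\Lambda_\kappa^2]\partial_\alpha\eta|_{7/2} \ls P(\|b_0\|_4)\|\eta\|_4$.

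\smallskip

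\textbf{Estimates \eqref{fest22} and \eqref{fest2}.} For \eqref{fest22}, $W:=b_0\cdot\nabla\fk$ satisfies $-\Delta W = -[\Delta,b_0\cdot\nabla]\fk$ in $\Omega$ (interior bounded by \eqref{fest1}), with trace obtained by applying $b_{0\alpha}\partial_\alpha$ to $\Delta_*^{-1}\mathbb{P}F$; commuting past $\Delta_*^{-1}$ (smoothing of order $2$) and $\mathbb{P}$ reduces the trace to combinations such as $b_0\cdot\nabla\eta$, $b_0\cdot\nabla v$, their $\Lambda_\kappa^2$-smoothings, and $b_0\cdot\nabla\ak$, all bounded by the right-hand side of \eqref{fest22} using the hypothesized norms, \eqref{tes1}, and \eqref{tes2}. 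The estimate \eqref{fest2} follows by the identical scheme with $\partial_t$ in place of $b_0\cdot\nabla$: $\partial_t\fk$ is harmonic with trace $\Delta_*^{-1}\mathbb{P}(\partial_tF)$, and differentiating $F$ in time produces the quantities $\partial_t\eta=v+\fk$, $\partial_tv$, and $\partial_t\ak$ (the last controlled by \eqref{tes0} via \eqref{partialF}), which close via \eqref{fest1}, \eqref{tes0}, and the hypothesis $\|\partial_tv\|_3$.
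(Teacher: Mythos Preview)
Your proposal is correct and follows essentially the same approach as the paper: elliptic $H^4$ regularity on \eqref{etadef} and \eqref{etaaa}, the crucial use of $b_{03}|_\Gamma=0$ to reduce $b_0\cdot\nabla$ to a tangential operator on $\Gamma$ for \eqref{tes2} and \eqref{fest22}, and the $H^{7/2}$ commutator bound $|[\Lambda_\kappa^2,b_{0\alpha}]\partial_\alpha\eta|_{7/2}\ls |b_0|_{7/2}|\eta|_{7/2}$ obtained by iterating the identity from the proof of \eqref{es1-1/2}. The only cosmetic differences are that the paper proves \eqref{tes2} before \eqref{fest1} and, for \eqref{fest1}, lifts the boundary product to $\Omega$ and uses the $H^2(\Omega)$ algebra property rather than the $H^{3/2}(\Gamma)$ algebra property you invoke---both are equivalent here.
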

\begin{proof}
First, the standard elliptic regularity theory on the problem \eqref{etadef}, the trace theorem and the estimate \eqref{test3} yield
\begin{equation*}
\norm{\eta^{\kappa}}_4\ls \norm{\Delta\eta}_2+\abs{\Lambda_{\kappa}^2\eta}_{7/2}\ls\norm{\eta}_4+\abs{\eta}_{7/2}\ls \norm{\eta}_4,
\end{equation*}
which implies \eqref{tes1}. To prove \eqref{tes2}, we apply $b_0\cdot\nabla$ to \eqref{etadef} to find that, since $b_0\cdot N=0$ on $\Gamma$,
\begin{equation*}
\begin{cases}
-\Delta(b_0\cdot\nabla\eta^{\kappa})=-\Delta(b_0\cdot\nabla\eta )-[b_0\cdot\nabla, \Delta]\eta +[b_0\cdot\nabla, \Delta]\eta^{\kappa} &\text{in } \Omega,\\
b_0\cdot\nabla\eta^{\kappa}=b_0\cdot\nabla\Lambda_{\kappa}^2\eta &\text{on } \Gamma.
\end{cases}
\end{equation*}
We then have, since $H^2$ is a multiplicative algebra and by \eqref{tes1},
\begin{equation*}
\begin{split}
\norm{b_0\cdot\nabla\eta^{\kappa}}_4&\ls \norm{-\Delta(b_0\cdot\nabla\eta )-[b_0\cdot\nabla, \Delta]\eta +[b_0\cdot\nabla, \Delta]\eta^{\kappa}}_2
+\abs{b_0\cdot\nabla\Lambda_{\kappa}^2\eta}_{7/2}
\\&\ls \norm{b_0\cdot\nabla\eta}_4+\norm{b_0}_4(\norm{ \eta}_4+\norm{ \eta^{\kappa}}_4)
+\abs{\Lambda_{\kappa}^2(b_0\cdot\nabla\eta)}_{7/2}+\abs{[b_0\cdot\nabla, \Lambda_{\kappa}^2]\eta}_{7/2}
\\&\ls \norm{b_0\cdot\nabla\eta}_4+\norm{b_0}_4\norm{\eta}_4+\abs{b_0\cdot\nabla\eta}_{7/2}
+\abs{b_0}_{7/2}\abs{\eta}_{7/2} \\
&\ls \norm{b_0\cdot\nabla\eta}_4+\norm{b_0}_4\norm{\eta}_4.
\end{split}
\end{equation*}
Here we have used the estimates \eqref{es1-1/2} to estimate
\begin{equation*}
\begin{split}
 \abs{[b_0\cdot\nabla, \Lambda_{\kappa}^2]\eta}_{7/2}&\le \abs{[\Lambda_{\kappa}^2,b_{0\alpha}]\pa_\alpha\eta}_{1/2}+\abs{[\Lambda_{\kappa}^2,b_{0\alpha}]\pa_\alpha \bp^3\eta}_{1/2}
 +\abs{\left[\bp^3,[\Lambda_{\kappa}^2,b_{0\alpha}]\pa_\alpha \right]\eta}_{1/2}
\\&\ls   \abs{b_0}_{W^{1,\infty}}\abs{\eta}_{7/2}
+\abs{b_0}_{7/2}\abs{\eta}_{7/2}\ls \abs{b_0}_{7/2}\abs{\eta}_{7/2}.
\end{split}
\end{equation*}
This proves \eqref{tes2}.

We now turn to prove \eqref{fest1}. By the boundary condition in \eqref{etaaa} and the elliptic theory, we obtain, using the identity \eqref{partialF}, the a priori assumption \eqref{inin3} and the estimates \eqref{tes1},
\begin{equation*}
\begin{split}
\abs{\fk}_{7/2}&\ls\abs{\Delta_{*}\eta_{j}\a^{\kappa}_{j\alpha}\partial_{\alpha}{\Lambda_{\kappa}^2 v}-\Delta_{*}{\Lambda_{\kappa}^2\eta}_{j}\a^{\kappa}_{j\alpha}\partial_{\alpha} v}_{3/2}
\ls\norm{\Delta_{*}\eta_{j}\a^{\kappa}_{j\alpha}\partial_{\alpha}{\Lambda_{\kappa}^2 v}-\Delta_{*}{\Lambda_{\kappa}^2\eta}_{j}\a^{\kappa}_{j\alpha}\partial_{\alpha} v}_{2}
\\&\ls\norm{ \eta}_4\norm{\a^{\kappa}}_{2}\norm{v}_{3}
\leq P(\norm{\eta}_4,\norm{v}_3).
\end{split}
\end{equation*}
This proves \eqref{fest1} by using further the elliptic theory and the trace theorem.

Finally, the estimate \eqref{tes0} can be obtained similarly as \eqref{tes1} by applying $\partial_t$ to \eqref{etadef} and then using the equation $\partial_t\eta =v+\fk$ and the estimate \eqref{fest1}.
The estimates \eqref{fest22} and \eqref{fest2} could be achieved similarly as \eqref{tes2} and \eqref{tes0} by applying $b_0\cdot\nabla$ and $\partial_t$ to \eqref{etaaa} and using the estimates \eqref{tes1}--\eqref{fest1}. This concludes the lemma.
\end{proof}

\subsubsection{Transport estimates of  $\eta$}

The transport estimate of $\eta$ is recorded as follows.
\begin{proposition}
For $t\in [0,T]$ with $T\le T_\kappa$, it holds that
\begin{equation}\label{etaest}
\norm{\eta(t)}_4^2\leq M_0+TP\left(\sup_{t\in[0,T]} \mathfrak{E}^{\kappa}(t)\right).
\end{equation}
\end{proposition}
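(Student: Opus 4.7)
The approach is direct: since the first equation of \eqref{approximate} reads $\partial_t\eta = v + \psi^\kappa$, the estimate reduces to integrating in time and controlling the right-hand side via the preliminary bounds of Lemma \ref{preest}. The plan is as follows.

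First, I would use the initial condition $\eta|_{t=0} = \mathrm{Id}$ together with the fundamental theorem of calculus to write
\begin{equation}
\eta(t) = \mathrm{Id} + \int_0^t \bigl(v(s) + \psi^\kappa(s)\bigr)\, ds,
\end{equation}
and then take the $H^4(\Omega)$ norm, applying Minkowski's integral inequality to obtain
\begin{equation}
\norm{\eta(t)}_4 \leq \norm{\mathrm{Id}}_4 + \int_0^t \bigl(\norm{v(s)}_4 + \norm{\psi^\kappa(s)}_4\bigr)\, ds.
\end{equation}
The first term is a geometric constant depending only on the fixed reference domain $\Omega = \mathbb{T}^2 \times (0,1)$.

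Next, I would invoke the estimate \eqref{fest1} from Lemma \ref{preest}, which yields the pointwise-in-time bound $\norm{\psi^\kappa(s)}_4 \leq P(\norm{\eta(s)}_4, \norm{v(s)}_3)$. Combined with the definition of the energy functional $\mathfrak{E}^\kappa$, which controls both $\norm{v}_4^2$ and $\norm{\eta}_4^2$, this gives
\begin{equation}
\norm{v(s)}_4 + \norm{\psi^\kappa(s)}_4 \leq P\bigl(\norm{\eta(s)}_4, \norm{v(s)}_4\bigr) \leq P\Bigl(\sup_{s\in[0,T]}\mathfrak{E}^\kappa(s)\Bigr).
\end{equation}
Integrating over $[0,t] \subset [0,T]$ and squaring the resulting inequality via $(a+b)^2 \leq 2a^2 + 2b^2$, I would absorb the geometric constant $2\norm{\mathrm{Id}}_4^2$ into $M_0 = P(\norm{v_0}_4^2 + \norm{b_0}_4^2)$ (adjusting the polynomial $P$ in $M_0$ by a fixed constant depending only on $\Omega$), and use $T \leq T_\kappa \leq 1$ (we may of course shrink $T_\kappa$ further) to bound $T^2$ by $T$ inside the polynomial. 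This produces precisely \eqref{etaest}.

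There is no real obstacle here: the proposition is essentially a textbook time-integration of a transport-type equation, and the only point requiring care is to verify that the polynomial bound on $\psi^\kappa$ involves no derivative loss at the top order. This is exactly what \eqref{fest1} provides, since $\psi^\kappa$ is defined via an elliptic problem whose boundary data is one derivative smoother than $v$ through the mollifier $\Lambda_\kappa^2$, so the $H^4$-regularity of $\psi^\kappa$ is controlled by $\norm{v}_3$ alone (with $\norm{\eta}_4$ entering only through $\mathcal{A}^\kappa$). Hence the argument closes cleanly.
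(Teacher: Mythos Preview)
Your proposal is correct and follows exactly the approach indicated in the paper: the paper's proof is the single line ``It follows by using $\partial_t\eta = v + \psi^\kappa$ and the estimate \eqref{fest1},'' and you have simply written out the details of that time-integration argument. The only cosmetic point is that one can avoid the squaring step (and the attendant $T\le 1$ reduction) by differentiating $\norm{\eta}_4^2$ directly in time, but your version is equally valid.
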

\begin{proof}
It follows by using  $\partial_t\eta =v+\fk$ and the estimate \eqref{fest1}.
\end{proof}

\subsubsection{Pressure estimate}\label{pressure1}

In the estimates in the later sections, one needs to estimate the pressure $q$. For this, applying $J^{\kappa}\divak$ to the second equation in \eqref{approximate}, by the third equation and the Piloa indentity \eqref{polia}, one gets
\begin{equation}\label{ell}
-\Div(E\nabla q )=G:=G^1+b_0\cdot\nabla G^2,
\end{equation}
where the matrix $E=J^{\kappa}(\a^{\kappa})^T\a^{\kappa}$ and the function $G$ is given by
\begin{align}
&G^1_i=J^{\kappa}\partial_t\a^{\kappa}_{ij}\partial_j v_i+\left[ J^{\kappa}\a^{\kappa}_{ij}\partial_j,b_0\cdot\nabla\right]b_0\cdot\nabla\eta_i,
 \\& G^2_i= J^{\kappa}\a^{\kappa}_{ij}\partial_j(b_0\cdot\nabla\eta_i).
\end{align}
Note that  by \eqref{inin3} the matrix $E$ is symmetric and positive.

We shall now prove the estimate for the pressure $q$.
\begin{proposition}\label{pressure}
The following estimate holds:
\begin{equation}
\label{press}
\norm{q }_4^2+\norm{\partial_tq }_3^2\leq P\left( \mathfrak{E}^{\kappa} \right).
\end{equation}
\end{proposition}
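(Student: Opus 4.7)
The plan is to read \eqref{press} off from the Dirichlet problem \eqref{ell} via standard elliptic regularity. The a priori bound \eqref{inin3} guarantees that the matrix $E = J^\kappa(\a^\kappa)^T\a^\kappa$ is symmetric and uniformly positive definite, and combining \eqref{partialF} with \eqref{tes1} gives $\norm{E}_3 \leq P(\norm{\eta}_4) \leq P(\mathfrak{E}^\kappa)$. First I would invoke the $H^4$ Dirichlet estimate for divergence-form operators,
\begin{equation*}
\norm{q}_4 \leq P(\norm{E}_3)\,\norm{G}_2,
\end{equation*}
which reduces the first half of \eqref{press} to the bound $\norm{G^1}_2 + \norm{b_0\cdot\nabla G^2}_2 \leq P(\mathfrak{E}^\kappa)$.

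For $G^1$, I would expand $\partial_t\a^\kappa$ via \eqref{partialF} as a rational function of $\nabla\eta^\kappa$ and $\nabla\partial_t\eta^\kappa$; the bounds \eqref{tes1} and \eqref{tes0}, together with the product estimate \eqref{co0}, give $\norm{J^\kappa\partial_t\a^\kappa\nabla v}_2 \leq P(\mathfrak{E}^\kappa)$. The commutator $[J^\kappa\a^\kappa_{ij}\partial_j, b_0\cdot\nabla](b_0\cdot\nabla\eta_i)$ expands, after the derivative hitting $b_0\cdot\nabla\eta$ cancels, into sums of products of $b_0\cdot\nabla(J^\kappa\a^\kappa)$ with derivatives of $b_0\cdot\nabla\eta$, each of which is controlled by $P(\norm{b_0}_4,\norm{\eta}_4,\norm{b_0\cdot\nabla\eta^\kappa}_4) \leq P(\mathfrak{E}^\kappa)$ thanks to \eqref{tes2}. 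For the second piece I would observe directly that $\norm{G^2}_3 \leq \norm{J^\kappa\a^\kappa}_3\,\norm{\nabla(b_0\cdot\nabla\eta)}_3 \leq P(\mathfrak{E}^\kappa)$, and then $\norm{b_0\cdot\nabla G^2}_2 \leq \norm{b_0}_3\,\norm{G^2}_3 \leq P(\mathfrak{E}^\kappa)$ by a further application of \eqref{co0}.

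For $\norm{\partial_t q}_3$ I would differentiate \eqref{ell} in time to obtain
\begin{equation*}
-\Div(E\nabla\partial_t q) = \partial_t G + \Div(\partial_t E\,\nabla q),\qquad \partial_t q\big|_\Gamma = 0,
\end{equation*}
and apply the $H^3$ Dirichlet estimate. The drift term $\Div(\partial_t E\,\nabla q)$ is controlled by the $\norm{q}_4$ bound just established together with $\norm{\partial_t E}_2 \leq P(\mathfrak{E}^\kappa)$ (from \eqref{tes0}). In $\partial_t G$ the only new object is $\partial_t v$; here I would use the momentum equation in \eqref{approximate} to write $\partial_t v = -\nak q + (b_0\cdot\nabla)^2\eta$, so that $\norm{\partial_t v}_3 \leq \norm{q}_4 + \norm{b_0}_3\norm{b_0\cdot\nabla\eta}_4 \leq P(\mathfrak{E}^\kappa)$, with no circularity. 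The remaining pieces of $\partial_t G$ are handled via $\partial_t(b_0\cdot\nabla\eta) = b_0\cdot\nabla(v+\fk)$ together with \eqref{fest22}--\eqref{fest2} and standard product/commutator estimates.

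None of these steps is analytically subtle; the main obstacle is book-keeping the derivative count on the $b_0\cdot\nabla G^2$ term. A half-derivative loss there would force one to control $b_0\cdot\nabla\eta$ in $H^{9/2}$, which is exactly what is unavailable in the MHD setting for the reason explained in the introduction. What makes the count close on the nose is the inclusion of $\norm{b_0\cdot\nabla\eta}_4^2$ in $\mathfrak{E}^\kappa$, and the choice to smooth $\eta$ via the boundary smoother \eqref{etadef} rather than by $\Lambda_\kappa^2$ in the interior, so that the bad commutator $[b_0\cdot\nabla,\Lambda_\kappa^2]\eta$ never enters the estimate of $G^2$.
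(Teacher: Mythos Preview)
Your proposal is correct, and the derivative counts all close. The route, however, differs from the paper's. You invoke the $H^4$ Dirichlet estimate for $-\Div(E\nabla q)=G$ as a black box and then check $\norm{G}_2\le P(\mathfrak{E}^\kappa)$ directly, in particular bounding $\norm{b_0\cdot\nabla G^2}_2$ via $\norm{b_0}_3\norm{G^2}_3$. The paper instead does the elliptic estimate by hand: it first tests \eqref{ell} against $q$ and integrates the $b_0\cdot\nabla G^2$ term by parts (using $\Div b_0=0$ and $b_0\cdot N=0$) to obtain $\norm{q}_1\lesssim\norm{G^1}_0+\norm{G^2}_0$, then applies $\bar\partial^\ell$ for $\ell=1,2,3$ and repeats, and finally recovers normal derivatives from the expression \eqref{np} for $\partial_{33}q$. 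The paper's integration-by-parts trick buys one derivative on $G^2$ (only $\norm{G^2}_{\ell}$ is needed rather than $\norm{b_0\cdot\nabla G^2}_{\ell}$), but as your count shows this extra room is not actually needed here since $\norm{G^2}_3\le P(\mathfrak{E}^\kappa)$ already holds. Your approach is shorter; the paper's is more self-contained and makes the role of the divergence structure $G=G^1+b_0\cdot\nabla G^2$ explicit. For $\partial_t q$ the two arguments coincide.
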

\begin{proof}
Multiplying \eqref{ell} by $q$ and then integrating by parts over $\Omega$, since $q=0$ and $b_0$ satisfies \eqref{bcond}, one obtains
\begin{equation}
\int_\Omega E\nabla q \cdot \nabla q = \int_\Omega G^1 q -\int_\Omega G^2  b_0\cdot\nabla q.
\end{equation}
Then we have
\begin{equation}
\norm{\nabla q}^2_0\ls \norm{b_0}_4\left(\norm{G^1}_{0}+\norm{G^2}_{0}\right)\norm{q}_{1}\leq \hal \norm{q}_1^2+ C\left(\norm{G^1}_{0}^2+\norm{G^2}_{0}^2\right)\norm{b_0}_4^2.
\end{equation}
By the Poincar\'e inequality $\norm{q}_1^2\ls \norm{\nabla q}_0^2$, one can get
\begin{equation} \label{eeeq}
\begin{split}
\norm{  q}^2_1 &\ls \norm{b_0}_4^2(\norm{ G^1}_{0}^2+\norm{ G^2}_{0}^2)
\\&\le P(\norm{b_0}_4^2, \norm{Dv}_0^2, \norm{D\partial_t\eta^{\kappa}}_0^2, \norm{D(b_0\cdot\nabla\eta)}_0^2, \norm{D\eta^{\kappa}}_0^2, \norm{J^{\kappa}\ak}_{L^{\infty}}^2)\leq P\left(\mathfrak{E}^{\kappa} \right).
\end{split}
 \end{equation}

Next, applying $\bar\partial^\ell $ with $\ell=1,2,3$ to the equation \eqref{ell} leads to
\begin{equation}
-\Div(E\nabla \bar\partial^\ell q )= \bar\partial^\ell G^1+b_0\cdot\nabla \bar\partial^\ell G^2+\left[\bar\partial^\ell, b_0\cdot\nabla \right]G^2
+\Div \left[\bar\partial^\ell, E\nabla \right]q.
\end{equation}
Then as for \eqref{eeeq}, we obtain
\begin{equation}\label{dddddddd}
\norm{ \bar\partial^\ell q}_{1}^2
\ls \norm{ \bar\partial^\ell G^1}_{0}^2+\norm{ \bar\partial^\ell G^2}_{0}^2
+ \norm{\left[\bar\partial^{\alpha},b_0\cdot\nabla\right]G_2}_{0}^2+\norm{\left[\bar\partial^{\alpha},E\nabla\right] q}_{0}^2.
\end{equation}
We then estimate the right hand side of \eqref{dddddddd}. By the estimates \eqref{tes1}--\eqref{tes0}, we may estimate
\begin{equation}\label{qqqq0}
\begin{split}
&\norm{\bar\partial^\ell G^1}_{0}^2+\norm{b_0\cdot\nabla \bar\partial^\ell G^2}_{0}^2+\norm{\left[\bar\partial^\ell, b_0\cdot\nabla \right]G^2}_{0}^2\\&\le P\left(\norm{\bar\partial^\ell \partial_t\eta^{\kappa}}_1,\norm{\bar\partial^\ell v}_1, \norm{\bar\partial^\ell \eta^{\kappa}}_1, \norm{\bar\partial^\ell (b_0\cdot\nabla\eta^{\kappa})}_1,\norm{\bar\partial^\ell (b_0\cdot\nabla\eta)}_1,\norm{b_0}_4,\norm{\eta^{\kappa}}_4\right)\\&\leq P\left( \mathfrak{E}^{\kappa} \right).
\end{split}
\end{equation}
For the last commutator term, by the H\"older inequality, we estimate for each $\ell=1,2,3,$
\begin{align}\label{qqqq1}
&\norm{[\bar\partial,E\nabla] q}_{0}\ls\norm{\bar\partial E}_{L^{\infty} }\norm{\nabla q}_{0}\ls\norm{  E}_{3}\norm{\nabla q}_{0},\\
\label{qqqq2}&\norm{[\bar\partial^2,E\nabla] q}_{0}\ls \norm{\bar\partial E}_{L^{\infty} }\norm{\bar\partial\nabla q}_{0}+\norm{\bar\partial^2E}_{L^3 }\norm{\nabla q}_{L^6 }\ls\norm{  E}_{3}\norm{\nabla q}_{1},\\
\label{qqqq3}&\norm{[\bar\partial^3,E\nabla] q}_{0}\ls\norm{\bar\partial E}_{L^{\infty} }\norm{\bar\partial^2\nabla q}_{0}+\norm{\bar\partial^2E}_{L^3 }\norm{\nabla q}_{L^6 }+\norm{\bar\partial^3 E}_{0}\norm{\nabla q}_{L^{\infty} }\ls\norm{  E}_{3}\norm{\nabla q}_{2}.
\end{align}
By \eqref{partialF}, \eqref{inin3} and \eqref{tes1} imply $\norm{E}_3\leq P(\norm{\eta}_4)$, plugging the estimates \eqref{qqqq0}--\eqref{qqqq3} into \eqref{dddddddd}, we obtain
\begin{equation}
\label{q1}
\norm{ \bar\partial^\ell q}_{1}^2  \leq  P\left( \mathfrak{E}^{\kappa} \right)\left(1+\norm{\nabla q}_{\ell-1}^2\right).
\end{equation}
On the other hand, the equation \eqref{ell} gives
\begin{equation}
\label{np}
\partial_{33}q=\dfrac{1}{E_{33}}\left(G-\sum_{i+j\neq6}\partial_i(E_{ij}\partial_jq)-\partial_3E_{3j}\partial_jq\right).
\end{equation}
This implies that we can estimate the normal derivatives of $q$ in terms of those $q$ terms with less normal derivatives. Hence, using the equation \eqref{np} and the estimates \eqref{eeeq} and \eqref{q1}, inductively on $\ell$, we obtain
\begin{equation}
\label{3q}
\norm{  q}^2_4\leq P\left( \mathfrak{E}^{\kappa} \right).
\end{equation}

We now estimate $\partial_t q$. Applying $\partial_t$ to the equation \eqref{ell} leads to
\begin{equation}
-\Div(E\nabla \partial_t q)=\partial_tG_1+b_0\cdot\nabla\partial_tG_2+\Div(\partial_tE\nabla q).
\end{equation}
By arguing similarly as for \eqref{3q}, we can obtain
\begin{equation}\label{3qt}
\norm{ \dt q}^2_3\leq P\left( \mathfrak{E}^{\kappa} \right).
\end{equation}
Here we have used the estimates \eqref{tes1}--\eqref{fest2} and noted that by using the second equation in \eqref{approximate} and the estimates \eqref{3q}:
\begin{equation}
\label{vt}
\norm{\partial_tv}_3^2=\norm{-\nabak q+(b_0\cdot \nabla)^2 \eta}_3^2 \leq P\left( \mathfrak{E}^{\kappa} \right).
\end{equation}
Consequently, the estimates \eqref{3q} and \eqref{3qt} give \eqref{press}.
\end{proof}

\subsubsection{Tangential energy estimates}\label{tan}

We start with the basic $L^2$ energy estimates.
\begin{proposition}\label{basic}
For $t\in [0,T]$ with $T\le T_\kappa$, it holds that
\begin{equation}\label{00estimate}
\norm{v(t)}_0^2+\norm{(b_0\cdot\nabla\eta)(t)}_0^2\leq M_0+TP\left(\sup_{t\in[0,T]}\mathfrak{E}^{\kappa}(t)\right).
\end{equation}
\end{proposition}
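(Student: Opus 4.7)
The plan is to test the momentum equation against $v$ in $L^2(\Omega)$, compute the time derivative of $\norm{b_0\cdot\nabla\eta}_0^2$ separately, and combine so that the Lorentz force terms cancel. First, taking the $L^2(\Omega)$ inner product of the second equation in \eqref{approximate} with $v$ gives
\begin{equation*}
\tfrac{1}{2}\tfrac{d}{dt}\norm{v}_0^2+\int_{\Omega}\a^{\kappa}_{ij}\partial_j q\,v_i=\int_{\Omega}(b_0\cdot\nabla)^2\eta_i\, v_i.
\end{equation*}
For the pressure term I would integrate by parts: the boundary contribution vanishes because $q=0$ on $\Gamma$, and the piece $\int_{\Omega}q\,\a^{\kappa}_{ij}\partial_j v_i$ vanishes because $\Div_{\a^{\kappa}}v=0$. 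Thus the only leftover is $-\int_{\Omega}q\,(\partial_j\a^{\kappa}_{ij})\,v_i$. Note that in the unapproximated problem the Piola identity $\partial_j(J\a_{ij})=0$ combined with $J=1$ would make this vanish; here, since $J^{\kappa}\ne 1$, it survives but is lower order and controlled by $\norm{q}_0\norm{D\a^{\kappa}}_{L^\infty}\norm{v}_0\le P(\mathfrak E^{\kappa})$ using \eqref{press}, \eqref{tes1}, and the $H^2\hookrightarrow L^\infty$ embedding applied to $\a^{\kappa}=\a(\eta^{\kappa})$.

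Next, differentiating the first equation in \eqref{approximate} along $b_0\cdot\nabla$ (which commutes with $\partial_t$ since $b_0$ is time-independent) yields $\partial_t(b_0\cdot\nabla\eta)=b_0\cdot\nabla v+b_0\cdot\nabla\psi^{\kappa}$, and hence
\begin{equation*}
\tfrac{1}{2}\tfrac{d}{dt}\norm{b_0\cdot\nabla\eta}_0^2=\int_{\Omega}(b_0\cdot\nabla\eta_i)(b_0\cdot\nabla v_i)+\int_{\Omega}(b_0\cdot\nabla\eta_i)(b_0\cdot\nabla\psi^{\kappa}_i).
\end{equation*}
The crucial observation is that, because $\Div b_0=0$ in $\Omega$ and $b_0\cdot N=0$ on $\Gamma$ by \eqref{bcond}, the differential operator $b_0\cdot\nabla$ is antisymmetric on $L^2(\Omega)$, so integration by parts gives $\int_{\Omega}(b_0\cdot\nabla\eta_i)(b_0\cdot\nabla v_i)=-\int_{\Omega}((b_0\cdot\nabla)^2\eta_i)v_i$. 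Adding the two identities, the Lorentz force contributions on the right-hand sides cancel exactly, leaving
\begin{equation*}
\tfrac{1}{2}\tfrac{d}{dt}\bigl(\norm{v}_0^2+\norm{b_0\cdot\nabla\eta}_0^2\bigr)=\int_{\Omega}q\,(\partial_j\a^{\kappa}_{ij})\,v_i+\int_{\Omega}(b_0\cdot\nabla\eta_i)(b_0\cdot\nabla\psi^{\kappa}_i).
\end{equation*}

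I would then bound both remainders by $P(\mathfrak E^{\kappa})$: the first as noted above, and the second by $\norm{b_0\cdot\nabla\eta}_0\norm{b_0\cdot\nabla\psi^{\kappa}}_0\le\norm{b_0\cdot\nabla\eta}_4\cdot P(\norm{\eta}_4,\norm{v}_4,\norm{b_0\cdot\nabla\eta}_4)\le P(\mathfrak E^{\kappa})$, using \eqref{fest22}. Integrating in time from $0$ to $t\le T\le T_{\kappa}$ and recalling $\eta(0)=\mathrm{Id}$ (so $b_0\cdot\nabla\eta(0)=b_0$), we obtain
\begin{equation*}
\norm{v(t)}_0^2+\norm{(b_0\cdot\nabla\eta)(t)}_0^2\le\norm{v_0}_0^2+\norm{b_0}_0^2+T\sup_{s\in[0,T]}P(\mathfrak E^{\kappa}(s))\le M_0+TP\!\left(\sup_{[0,T]}\mathfrak E^{\kappa}\right),
\end{equation*}
which is \eqref{00estimate}. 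The main obstacle, though modest at this basic level, is that testing directly against $v$ (rather than against $J^{\kappa}v$) leaves a nonzero pressure remainder because the Piola identity only gives $\partial_j(J^{\kappa}\a^{\kappa}_{ij})=0$ and $J^{\kappa}\not\equiv 1$ for the approximate system; one must verify that this remainder is of lower order and absorbable into $P(\mathfrak E^{\kappa})$, which follows from the pressure bound \eqref{press} and the boundary smoother estimate \eqref{tes1}.
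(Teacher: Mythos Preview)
Your proof is correct and follows essentially the same approach as the paper's: both test the momentum equation against $v$, integrate by parts in the pressure term (leaving the lower-order $\int q\,\partial_j\a^{\kappa}_{ij}\,v_i$ remainder bounded via \eqref{press} and \eqref{tes1}), and use the antisymmetry of $b_0\cdot\nabla$ coming from \eqref{bcond} together with $\partial_t\eta=v+\psi^{\kappa}$ to produce the $\tfrac{d}{dt}\norm{b_0\cdot\nabla\eta}_0^2$ term and a $\psi^{\kappa}$-remainder. The only cosmetic difference is organizational: the paper manipulates the Lorentz term $-\int(b_0\cdot\nabla)^2\eta\cdot v$ directly inside the single energy identity to extract $\tfrac{1}{2}\tfrac{d}{dt}\norm{b_0\cdot\nabla\eta}_0^2$, whereas you compute $\tfrac{d}{dt}\norm{b_0\cdot\nabla\eta}_0^2$ separately and then add; and the paper cites \eqref{fest1} (bounding $\norm{b_0\cdot\nabla\psi^{\kappa}}_0$ crudely via $\norm{b_0}_{L^\infty}\norm{D\psi^{\kappa}}_0$) while you invoke \eqref{fest22}.
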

\begin{proof}
Taking the $L^2(\Omega)$ inner product of the second equation in \eqref{approximate} with $v$ yields
\begin{equation}\label{hhl1}
 \dfrac{1}{2}\dfrac{d}{dt}\int_{\Omega}\abs{v}^2 +\int_{\Omega} \nak q\cdot v -\int_{\Omega}(b_0\cdot\nabla)^2\eta \cdot v =0.
\end{equation}
By the integration by parts and using the third equation and the boundary condition $q=0$ on $\Gamma$, using the pressure estimates \eqref{press}, we have
\begin{equation}
-\int_{\Omega} \nak q\cdot v=\int_{\Omega}\partial_j\a_{ij} q v_i\le\norm{D \a^\kappa}_{L^{\infty}}\norm{v}_0\norm{q}_0\le P\left(\sup_{t\in[0,T]}\mathfrak{E}^{\kappa}(t)\right).
\end{equation}
Since $b_0$ satisfies \eqref{bcond}, by the integration by parts and using $\dt \eta=v+\fk$, we obtain
\begin{equation}\label{hhl3}
\begin{split}
-\int_{\Omega}(b_0\cdot\nabla)^2\eta\cdot v&=\int_{\Omega}b_0\cdot\nabla\eta_ib_0\cdot\nabla v_i\\&=\int_{\Omega}b_0\cdot\nabla\eta_ib_0\cdot\nabla \partial_t\eta_i\,dx-\int_{\Omega}b_0\cdot\nabla\eta_ib_0\cdot\nabla \psi^\kappa_i
\\&=\dfrac{1}{2}\dfrac{d}{dt}\int_{\Omega}\abs{b_0\cdot\nabla\eta}^2\,dx-\int_{\Omega}b_0\cdot\nabla\eta_ib_0\cdot\nabla \psi^\kappa_i
\end{split}
\end{equation}
Then \eqref{hhl1}--\eqref{hhl3} implies, using the estimates \eqref{fest1},
\begin{equation}
\dfrac{d}{dt}\int_{\Omega}\abs{v}^2+\abs{b_0\cdot\nabla\eta}^2 \leq P\left(\sup_{t\in[0,T]}\mathfrak E^{\kappa}(t)\right).
\end{equation}
Integrating directly in time of the above yields \eqref{00estimate}.
\end{proof}
In order to perform higher order tangential energy estimates, one needs to compute the equations satisfied by $(\bp^4 v, \bp^4 q, \bp^4 \eta)$, which requires to commutate $\bp^4$ with each term of $\pa^\ak_i$. It is thus useful to establish the following general expressions and estimates for commutators.
It turns out that it is a bit more convenient to consider the equivalent differential operator $\bp^2\Delta_\ast$ so that we can employ the structure of $\Delta_\ast \fk$ on $\Gamma$. For $i=1,2,3,$ we have
\begin{equation}
 \bp^2\Delta_\ast (\pa^\ak_if) =  \pa^\ak_i \bar\partial^2\Delta_* f + \bp^2\Delta_* {\a^{\kappa}_{ij}} \pa_j f+\left[\bar\partial^2\Delta_*, {\a^{\kappa}_{ij}} ,\pa_j f\right].
\end{equation}
By the identity \eqref{partialF}, we have that
\begin{equation}
\begin{split}
&\bp^2\Delta_* (\a^\kappa_{ij} \pa_j f)=-\bp\Delta_*(\a^\kappa_{i\ell}\bp\pa_\ell  \eta^{\kappa}_m  \a^\kappa_{mj})\pa_j f
\\&\quad=-\a^\kappa_{i\ell}\pa_\ell \bp^2\Delta_*\eta^{\kappa}_m\a^\kappa_{mj}\pa_j f
-\left[\bp\Delta_*, \a^\kappa_{i\ell}\a^\kappa_{mj} \right]\bp \pa_\ell  \eta^{\kappa}_m \pa_j f
\\&\quad=-\pa^\ak_i( \bp^2\Delta_*\eta^{\kappa}\cdot\nak f)+\bp^2\Delta_*\eta^{\kappa}\cdot\nak( \pa^\ak_i f)
-\left[\bp\Delta_*, \a^\kappa_{i\ell}\a^\kappa_{mj}\right]\bp\pa_\ell \eta^{\kappa}_m\pa_j f.
\end{split}
\end{equation}
It then holds that
\begin{equation}\label{commf}
 \bp^2\Delta_\ast (\pa^\ak_if) =  \pa^\ak_i\left(\bar\partial^2\Delta_* f- \bp^2\Delta_*\eta^{\kappa}\cdot\nak f\right)+ \mathcal{C}_i(f).
\end{equation}
where the commutator $\mathcal{C}_i(f)$ is given by
\begin{equation}
\mathcal{C}_i(f)=\left[\bar\partial^2\Delta_*, {\a^{\kappa}_{ij}} ,\pa_j f\right]-\bp^2\Delta_*\eta^{\kappa}\cdot\nak( \pa^\ak_i f)
+\left[\bp\Delta_*, \a^\kappa_{i\ell}\a^\kappa_{mj}\right]\bp\pa_\ell \eta^{\kappa}_m\pa_j f
\end{equation}
It was first observed by Alinhac \cite{Alinhac} that the highest order term of $\eta$ will be cancelled when one uses the good unknown $\bar\partial^2\Delta_* f- \bp^2\Delta_*\eta^{\kappa}\cdot\nak f$, which allows one to perform high order energy estimates.

The following lemma deals with the estimates of the commutator $\mathcal C_i(f)$.
\begin{lemma}
The following estimate holds:
\begin{equation}\label{comest}
\norm{\mathcal C_i (f)}_0\leq  P(\norm{\eta}_4) \norm{f}_4.
\end{equation}
\end{lemma}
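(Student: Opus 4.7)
The plan is to decompose $\mathcal{C}_i(f)$ into its three explicitly listed pieces and estimate each separately, using the product/commutator lemmas \eqref{co0}--\eqref{co2} of the preliminaries. The guiding observation is that the full tangential operator has order four, so the ``worst'' surviving term should carry at most four derivatives on $f$ (giving $\norm{f}_4$) or at most four derivatives on $\eta^\kappa$ (bounded by $\norm{\eta^\kappa}_4 \ls \norm{\eta}_4$ via \eqref{tes1}). Throughout, one systematically replaces $\norm{\eta^\kappa}_4$ by $\norm{\eta}_4$ using \eqref{tes1}, and uses that $\a^\kappa$ is a smooth function of $\nabla\eta^\kappa$ so that $\norm{\a^\kappa}_3 \le P(\norm{\eta^\kappa}_4) \le P(\norm{\eta}_4)$ (invoking that $H^3(\Omega)$ is an algebra in three dimensions, together with the a priori bound \eqref{inin3}).

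For the first term $[\bar\partial^2\Delta_*, \a^\kappa_{ij}, \pa_j f]$, I would apply the symmetric-commutator estimate \eqref{co2} with $|\alpha|=k=4$, $g=\a^\kappa_{ij}$, $h=\pa_j f$. This produces the bound $\norm{D\a^\kappa}_2 \norm{D\pa_j f}_2 \ls \norm{\a^\kappa}_3 \norm{f}_4$, which by the above yields $P(\norm{\eta}_4)\norm{f}_4$. For the second term $\bp^2\Delta_*\eta^\kappa\cdot\nak(\pa^\ak_i f)$, the strategy is to put the high-order factor in $L^2$ and the low-order factor in $L^\infty$: since $\bp^2\Delta_*\eta^\kappa$ uses four tangential derivatives on $\eta^\kappa$, we have $\norm{\bp^2\Delta_*\eta^\kappa}_0 \ls \norm{\eta^\kappa}_4$, while $\nak(\pa^\ak_i f)$ contains at most two derivatives of $f$ (plus up to one derivative of $\a^\kappa$), so by Sobolev embedding $H^2 \hookrightarrow L^\infty$ together with the product rule one gets $\norm{\nak(\pa^\ak_i f)}_{L^\infty} \ls \norm{\nak(\pa^\ak_i f)}_2 \le P(\norm{\a^\kappa}_3)\norm{f}_4$. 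Multiplying gives the desired bound.

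The most delicate piece is the third term $[\bp\Delta_*,\a^\kappa_{i\ell}\a^\kappa_{mj}]\bp\pa_\ell\eta^\kappa_m\pa_j f$, since the argument $h:=\bp\pa_\ell\eta^\kappa_m\pa_j f$ already carries three derivatives. Apply \eqref{co1} with $|\alpha|=k=3$ and $g=\a^\kappa_{i\ell}\a^\kappa_{mj}$; the commutator always transfers at least one derivative off of $h$, so only $\norm{h}_2$ and $\norm{Dg}_2$ appear on the right-hand side. Then use the product-rule estimate together with $H^2$ being an algebra in three dimensions to bound
\begin{equation*}
\norm{h}_2 = \norm{\bp\pa_\ell\eta^\kappa_m\,\pa_j f}_2 \ls \norm{\eta^\kappa}_4 \norm{f}_3,
\end{equation*}
and $\norm{Dg}_2 \ls \norm{\a^\kappa}_3^2 \le P(\norm{\eta}_4)$; combining these produces a bound of the form $P(\norm{\eta}_4)\norm{f}_4$, as required.

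The main obstacle is keeping strict control of the derivative counting in the third term, because a naive application of the Leibniz rule to $\bp\Delta_*(g\,\bp\pa_\ell\eta^\kappa_m\pa_j f)$ would appear to demand five derivatives of $\eta^\kappa$ and is hence out of reach. The key is to exploit the fact that it is a \emph{commutator}, so the pure term $g\bp\Delta_* h$ (with three derivatives on $h$, hence four on $\eta^\kappa$) is subtracted off, leaving only contributions with at most two derivatives on $h$. Assembling the three bounds and taking the maximum of the polynomials yields the estimate \eqref{comest}.
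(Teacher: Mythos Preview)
Your strategy matches the paper's: split $\mathcal{C}_i(f)$ into its three pieces and bound each by $P(\norm{\eta}_4)\norm{f}_4$, using \eqref{co2} for the symmetric commutator, an $L^2$--$L^\infty$ Sobolev split for the second piece, and a commutator estimate for the third.

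There is one slip in your reading of the third term. The operator $[\bp\Delta_*,\a^\kappa_{i\ell}\a^\kappa_{mj}]$ acts only on $\bp\pa_\ell\eta^\kappa_m$; the factor $\pa_j f$ sits \emph{outside} (trace the derivation: one first computes $\bp^2\Delta_*\a^\kappa_{ij}$ via \eqref{partialF} and only afterwards multiplies by $\pa_j f$). So the quantity to estimate is $\big([\bp\Delta_*,g]\bp\pa_\ell\eta^\kappa_m\big)\pa_j f$, not $[\bp\Delta_*,g]h$ with your $h=\bp\pa_\ell\eta^\kappa_m\,\pa_j f$. The paper handles this simply by pulling $\pa_j f$ out in $L^\infty$ (costing $\norm{f}_3$) and then bounding $\norm{[\bp\Delta_*,g]\bp\pa_\ell\eta^\kappa_m}_0$. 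Your derivative-counting heuristic (``the commutator transfers at least one derivative, leaving at most two on the inner argument'') is correct and applies verbatim once you make this correction, so the final bound is unaffected. As a side remark, note that the literal inequality \eqref{co1} with $k=3$ does not give ``only $\norm{h}_2$ and $\norm{Dg}_2$'' as you write; what you are really invoking---and what suffices---is the direct Leibniz expansion of a third-order commutator.
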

\begin{proof}
First, by the commutator estimates \eqref{co2} and the estimates \eqref{tes1}, we have
\begin{equation}\label{Calpha1}
\norm{[\bp^2\Delta_*, \a^{\kappa}_{ij}, \partial_j f]}_0\ls\norm{\a^{\kappa}}_{3}\norm{D f}_{3}\leq P(\norm{\eta}_4) \norm{f}_4.
\end{equation}
Next, by the estimates \eqref{tes1} again, we get
\begin{equation}
\norm{\bp^2\Delta_*\eta^{\kappa}\cdot\nak( \pa^\ak_i f)}_0\le \norm{\bp^4\eta^\kappa}_0\norm{\nak( \pa^\ak_i f)}_{L^{\infty} } \leq P(\norm{\eta}_4) \norm{f}_4.
\end{equation}
Finally, by the commutator estimates \eqref{co1} and the estimates \eqref{tes1}, we obtain
\begin{equation}\label{Calpha3}
\norm{\left[\bp\Delta_*, \a^\kappa_{i\ell}\a^\kappa_{mj}\right]\bp\pa_\ell \eta^{\kappa}_m\pa_j f}_0\le \norm{\left[\bp\Delta_*, \a^\kappa_{i\ell}\a^\kappa_{mj}\right]\bp\pa_\ell \eta^{\kappa}_m}_0\norm{Df}_{L^{\infty} }
 \leq P(\norm{\eta}_4) \norm{f}_3.
\end{equation}

Consequently, the estimate \eqref{comest} follows by collecting \eqref{Calpha1}--\eqref{Calpha3}.
\end{proof}

We now introduce the good unknowns
\begin{equation}
\label{gun}
\mathcal{V}=\bar\partial^2\Delta_* v- \bp^2\Delta_*\eta^{\kappa}\cdot\nak v,\quad \mathcal{Q}=\bar\partial^2\Delta_* q- \bp^2\Delta_*\eta^{\kappa}\cdot\nak q.
\end{equation}
Applying $\bar\partial^2\Delta_*$ to the second and third equations in \eqref{approximate}, by \eqref{commf}, one gets
\begin{equation}\label{eqValpha}
\begin{split}
& \mathcal{V}_t + \nak \mathcal{Q}-(b_0\cdot\nabla)\left(\bar\partial^2\Delta_*(b_0\cdot\nabla\eta)\right)\\&\quad =F:=  \dt\left(\bp^2\Delta_*\eta^{\kappa}\cdot\nak v\right) - \mathcal{C}_i(q) +\left[\bar\partial^2\Delta_*, b_0\cdot\nabla\right]b_0\cdot\nabla\eta \text{ in }\Omega,
\end{split}
\end{equation}
and
\begin{equation} \label{divValpha}
  \nabla_\ak\cdot \mathcal{V}=- \mathcal{C}_i(v_i) \text{ in }\Omega.
\end{equation}
Note that $q=0$ on $\Gamma$ implies,
\begin{equation}\label{qValpha}
\mathcal{Q}=- \bp^2\Delta_*\Lambda_{\kappa}^2\eta_i\a^\kappa_{i3}\pa_3 q \text{ on }\Gamma.
\end{equation}

We shall now derive the $\bp^4$-energy estimates.
\begin{proposition}\label{tane}
For $t\in [0,T]$ with $T\le T_\kappa$, it holds that
\begin{equation}
\label{teee}
\norm{\bar\partial^4 v(t)}_0^2+\norm{\bar\partial^4(b_0\cdot\nabla\eta)(t)}_0^2+\abs{\bar\partial^4 \Lambda_{\kappa}\eta_i \a^{\kappa}_{i3}(t)}_0^2\leq M_0+TP\left(\sup_{t\in[0,T]}\mathfrak{E}^{\kappa}(t)\right).
\end{equation}
\end{proposition}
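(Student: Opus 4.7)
The strategy is to test the evolution equation \eqref{eqValpha} for the good unknown $\mathcal V$ against $J^\kappa\mathcal V$ in $L^2(\Omega)$, use the incompressibility-type identity \eqref{divValpha} and Piola's identity \eqref{polia} to convert the pressure term into a boundary integral, and exploit the modification $\fk$ to produce the desired energy with the correct scaling of $\Lambda_\kappa$ on the boundary. The magnetic term is turned into $\tfrac12\tfrac{d}{dt}\|\bar\partial^4(b_0\cdot\nabla\eta)\|_0^2$ by integrating by parts (using $b_0\cdot N=0$ from \eqref{bcond}) and substituting $\dt\eta=v+\fk$; all other contributions are shown to be $P(\mathfrak E^\kappa)$ by the preparatory lemmas already established.

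First I would compute the time derivative: pairing \eqref{eqValpha} with $J^\kappa\mathcal V$ gives
$$\tfrac12\tfrac{d}{dt}\int_\Omega J^\kappa|\mathcal V|^2+\int_\Omega J^\kappa\nak\mathcal Q\cdot\mathcal V-\int_\Omega J^\kappa(b_0\cdot\nabla)\bar\partial^2\Delta_*(b_0\cdot\nabla\eta)\cdot\mathcal V=\int_\Omega J^\kappa F\cdot\mathcal V+\tfrac12\int_\Omega\dt J^\kappa|\mathcal V|^2.$$
Using \eqref{gun} and the estimates \eqref{tes1}, the first quantity equals $\|\bar\partial^4v\|_0^2+O(\text{lower})$ up to a remainder bounded by $P(\mathfrak E^\kappa)$. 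The right-hand side forcing is controlled by the commutator bound \eqref{comest}, the pressure estimate \eqref{press}, and Lemma \ref{preest}. For the magnetic term, integrating by parts in $b_0\cdot\nabla$ (no boundary contribution thanks to \eqref{bcond}) and then writing $\bar\partial^2\Delta_*v=\dt(\bar\partial^2\Delta_*\eta)-\bar\partial^2\Delta_*\fk$ via $\dt\eta=v+\fk$, one obtains
$$\tfrac12\tfrac{d}{dt}\|\bar\partial^4(b_0\cdot\nabla\eta)\|_0^2+R_B,$$
where the commutator $[\bar\partial^2\Delta_*,b_0\cdot\nabla]b_0\cdot\nabla\eta$ and the $\fk$ remainders $R_B$ are controlled using \eqref{fest1}, \eqref{fest22}, \eqref{tes2}, and \eqref{co1}.

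The crux is the pressure term. Piola's identity \eqref{polia} and \eqref{divValpha} give
$$\int_\Omega J^\kappa\nak\mathcal Q\cdot\mathcal V=-\int_\Omega J^\kappa\mathcal Q\,\divak\mathcal V+\int_\Gamma \mathcal Q\,\mathcal V_i\,J^\kappa\a^\kappa_{i3}N_3,$$
and the interior term is absorbed into $P(\mathfrak E^\kappa)$ by \eqref{comest} and \eqref{press}. Substituting \eqref{qValpha} and the Taylor sign \eqref{ini2} (noting $-\pa_3q=-\nabla q\cdot N$ on the top face, with the symmetric identity on the bottom), the boundary term becomes
$$\int_\Gamma(-\nabla q\cdot N)\,\bar\partial^2\Delta_*\Lambda_\kappa^2\eta_i\,\a^\kappa_{i3}\,\mathcal V_j\,\a^\kappa_{j3}.$$
Now expand $\mathcal V|_\Gamma$ using \eqref{gun} and the trace $\eta^\kappa|_\Gamma=\Lambda_\kappa^2\eta$; then replace $v|_\Gamma$ by $(\dt\eta-\fk)|_\Gamma$. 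The $\dt\eta$ part produces
$$\tfrac12\tfrac{d}{dt}\int_\Gamma(-\nabla q\cdot N)\big|\bar\partial^4\Lambda_\kappa\eta_i\,\a^\kappa_{i3}\big|^2$$
after rewriting $\bar\partial^2\Delta_*\Lambda_\kappa^2\eta_i\,\a^\kappa_{i3}\,\bar\partial^2\Delta_*\dt\eta_j\,\a^\kappa_{j3}$ as a perfect $t$-derivative and absorbing $\dt\a^\kappa$ and $\dt(\nabla q\cdot N)$ into $P(\mathfrak E^\kappa)$ via \eqref{press}, \eqref{tes0}, \eqref{vt}, Lemma \ref{normal trace}, and the commutator bounds of Lemma \ref{comm11} together with \eqref{co123}. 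What remains are precisely the two mismatched boundary terms displayed in \eqref{ine}, in which one factor carries $\Lambda_\kappa^2$ and the other no smoother at all; these are the troublesome terms, and they are cancelled by the $\fk$ contribution
$$-\int_\Gamma(-\nabla q\cdot N)\,\bar\partial^2\Delta_*\Lambda_\kappa^2\eta_i\,\a^\kappa_{i3}\,\bar\partial^2\Delta_*\fk_j\,\a^\kappa_{j3},$$
which, after using the explicit boundary formula \eqref{etaaa} for $\fk$ and the projection $\mathbb P$, reduces exactly to minus the sum of the two mismatched terms modulo $P(\mathfrak E^\kappa)$; this is where the definition \eqref{etaaa} is tuned to do its job.

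Collecting the three time derivatives and integrating the resulting inequality on $[0,t]$ with $t\le T\le T_\kappa$ yields \eqref{teee}. The main obstacle, as signaled in the strategy section around \eqref{ine}, is the bookkeeping of the boundary term: verifying that the $\fk$ piece exactly annihilates the $\Lambda_\kappa^2$-versus-$\Lambda_\kappa$ asymmetry in the pressure boundary integral, so that only the clean energy $|\bar\partial^4\Lambda_\kappa\eta_i\,\a^\kappa_{i3}|_0^2$ survives, with all other pieces being either perfect time derivatives already accounted for on the left or remainders controlled uniformly in $\kappa$ by $P(\mathfrak E^\kappa)$ through Lemmas \ref{preest}, \ref{comm11}, \ref{normal trace} and Proposition \ref{pressure}.
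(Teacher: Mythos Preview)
Your overall strategy matches the paper's: test \eqref{eqValpha} against (a weight times) $\mathcal V$, convert the pressure term to a boundary integral via \eqref{divValpha}--\eqref{qValpha}, pull out the time derivative of $\int_\Gamma(-\nabla q\cdot N)|\bar\partial^4\Lambda_\kappa\eta_i\,\a^\kappa_{i3}|^2$, and use the boundary formula in \eqref{etaaa} so that the $\fk$ contribution cancels the two ``bad'' terms of \eqref{ine}. The choice of $J^\kappa\mathcal V$ versus $\mathcal V$ as test function is inessential.

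There is, however, a genuine gap in your treatment of the $\dt\a^\kappa$ contribution. You assert that this term is absorbed into $P(\mathfrak E^\kappa)$, but in fact its tangential part is exactly where one of the two bad terms originates, and it also produces an additional dangerous piece that you do not address. Concretely, on $\Gamma$ one has $\dt\a^\kappa_{i3}=-\a^\kappa_{i\ell}\partial_\ell\dt\eta^\kappa_m\,\a^\kappa_{m3}$ with $\dt\eta^\kappa=\Lambda_\kappa^2(v+\fk)$; the normal component $\ell=3$ is harmless (it carries $\a^\kappa_{i3}$ and is controlled by the boundary energy), but the tangential components $\ell=\alpha$ give
\[
\int_\Gamma(-\nabla q\cdot N)\,\a^\kappa_{j3}\bar\partial^4\Lambda_\kappa\eta_j\;\a^\kappa_{i\alpha}\partial_\alpha\Lambda_\kappa^2 v_m\,\a^\kappa_{m3}\;\bar\partial^4\Lambda_\kappa\eta_i
\;+\;\int_\Gamma(-\nabla q\cdot N)\,\a^\kappa_{j3}\bar\partial^4\Lambda_\kappa\eta_j\;\a^\kappa_{i\alpha}\partial_\alpha\Lambda_\kappa^2\fk_m\,\a^\kappa_{m3}\;\bar\partial^4\Lambda_\kappa\eta_i.
\]
The first integral is precisely one of the two mismatched terms (it is the second line of \eqref{ine}); it is \emph{not} $P(\mathfrak E^\kappa)$ and must be cancelled by $\mathcal I_5$. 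The second integral (call it $\mathcal I_{2c}$) contains the factor $\bar\partial^4\Lambda_\kappa\eta_i$ contracted with $\a^\kappa_{i\alpha}$, not with $\a^\kappa_{i3}$, so it is \emph{not} controlled by the boundary energy $|\bar\partial^4\Lambda_\kappa\eta_i\,\a^\kappa_{i3}|_0$, and the estimates you cite (\eqref{fest1}, \eqref{tes0}, Lemma~\ref{comm11}) do not close it uniformly in $\kappa$. The paper handles $\mathcal I_{2c}$ by proving the quantitative smallness $|\bar\partial\fk|_{L^\infty(\Gamma)}\lesssim\sqrt{\kappa}\,P(\|\eta\|_4,\|v\|_3)$ (see \eqref{lwuqing}), which exactly compensates the blow-up $|\bar\partial^4\Lambda_\kappa\eta|_0\lesssim\kappa^{-1/2}|\eta|_{7/2}$ from \eqref{loss}. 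This $\sqrt{\kappa}$ estimate is a key technical step that your outline omits; without it the argument does not yield a $\kappa$-independent bound.
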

\begin{proof}
Taking the $L^2(\Omega)$ inner product of \eqref{eqValpha} with $\mathcal{V}$ yields
\begin{equation}\label{ttt1}
 \dfrac{1}{2}\dfrac{d}{dt}\int_{\Omega}\abs{\mathcal{V}}^2+\int_{\Omega} \nak \mathcal{Q}\cdot \mathcal{V} +\int_{\Omega} \bar\partial^2\Delta_*(b_0\cdot\nabla\eta_i)   b_0\cdot\nabla \mathcal{V}_i =\int_{\Omega} F\cdot \mathcal{V}.
\end{equation}
By the estimates \eqref{tes1}, \eqref{tes0}, \eqref{comest}, \eqref{vt} and  \eqref{press}, by the definition of $\mathcal{V}$, we have
\begin{equation}
\label{ggg0}
\begin{split}
\int_{\Omega} F\cdot \mathcal{V}&\le\left(\norm{\dt(\bp^4\eta^{\kappa}\cdot\nabla_{\a^{\kappa}}v) }_0+\norm{\mathcal C_i (q)}_0+\norm{[\bp^2\Delta_*, b_0\cdot\nabla]b_0\cdot\nabla\eta}_0  \right)\norm{\mathcal{V}}_0
\\&\leq \left( P(\norm{\eta}_4, \norm{v}_4, \norm{\dt v}_3)+P(\norm{\eta}_4)\norm{q}_4+\norm{b_0}_4\norm{b_0\cdot\nabla\eta}_4\right)P(\norm{\eta^\kappa}_4, \norm{v}_4)
\\   &\leq P\left(\sup_{t\in[0,T]} \mathfrak{E}^{\kappa}(t)\right).
\end{split}
\end{equation}
We now estimate the second and third terms on the left hand side of \eqref{ttt1}. By the definition of $\mathcal V$ and  recalling that $v=\partial_t\eta-\fk$, we have that for the third term
\begin{align}
\label{gg1}
&\int_{\Omega}  \bar\partial^2\Delta_*(b_0\cdot\nabla\eta_i)    b_0\cdot\nabla \mathcal{V}_i\nonumber
\\&\quad= \int_{\Omega}  \bar\partial^2\Delta_*(b_0\cdot\nabla\eta_i)   b_0\cdot\nabla\left(\bar\partial^2\Delta_* v_i- \bp^2\Delta_*\eta^{\kappa}\cdot\nak v_i\right)\nonumber
\\&\quad=\int_{\Omega}  \bar\partial^2\Delta_*(b_0\cdot\nabla\eta_i)   \left(\bar\partial^2\Delta_*(b_0\cdot\nabla v_i)-[\bar\partial^2\Delta_*,b_0\cdot\nabla] v_i-b_0\cdot\nabla(\bp^2\Delta_*\eta^{\kappa}\cdot\nak v_i)\right)\nonumber
\\&\quad=\hal \dfrac{d}{dt}\int_{\Omega}\abs{\bar\partial^2\Delta_*(b_0\cdot\nabla\eta)}^2\nonumber
\\&\quad\quad-\int_{\Omega}  \bar\partial^2\Delta_*(b_0\cdot\nabla\eta_i)   \left(\bar\partial^2\Delta_*(b_0\cdot\nabla  \psi^\kappa_i)+[\bar\partial^2\Delta_*,b_0\cdot\nabla] v_i+b_0\cdot\nabla(\bp^2\Delta_*\eta^{\kappa}\cdot\nak v_i)\right)\nonumber
\\ &\quad\ge \hal \dfrac{d}{dt}\int_{\Omega}\abs{\bar\partial^2\Delta_*(b_0\cdot\nabla\eta)}^2  - \norm{b_0\cdot\nabla\eta}_4 P(\norm{b_0\cdot\nabla\fk}_4,\norm{b_0}_4,\norm{v}_4,\norm{\eta^\kappa}_4,\norm{b_0\cdot\nabla\eta^\kappa}_4)\nonumber
\\ &\quad\ge \hal \dfrac{d}{dt}\int_{\Omega}\abs{\bar\partial^2\Delta_*(b_0\cdot\nabla\eta)}^2  - P\left(\sup_{t\in[0,T]} \mathfrak{E}^{\kappa}(t)\right) .
\end{align}
Here we have used the commutator estimates \eqref{co1}, the estimates \eqref{tes1}, \eqref{tes2} and \eqref{fest22}.

Now for the second term, by the integration by parts and using the equation \eqref{divValpha} and the boundary condition \eqref{qValpha}, we obtain
\begin{equation}
\label{gg2}
\begin{split}
\int_{\Omega} \nak \mathcal{Q}\cdot \mathcal{V}&=\int_{\Gamma} \mathcal{Q}\a^{\kappa}_{i\ell}N_\ell\mathcal{V}_i -\int_{\Omega} \mathcal{Q} \nabla_\ak\cdot \mathcal{V}-\int_{\Omega}\pa_\ell (\a^{\kappa}_{i\ell} )\mathcal{Q} \mathcal{V}_i\\&=
- \int_{\Gamma} \pa_3 q \bp^2\Delta_*(\Lambda_\kappa^2\eta_j)\a^\kappa_{j3} \a^{\kappa}_{i\ell}N_\ell\mathcal{V}_i +\underbrace{\int_{\Omega} \mathcal{Q} \mathcal{C}_i(v_i)-\pa_\ell (\a^{\kappa}_{i\ell} )\mathcal{Q} \mathcal{V}_i }_{\mathcal R}.
\end{split}
\end{equation}
By the definition of $\mathcal{Q}$ and $\mathcal{V}$ and the estimates \eqref{press}, \eqref{comest} and \eqref{tes1},  we have
\begin{equation}
\label{gggg3}
\begin{split}
  \mathcal R&\ls\norm{\mathcal Q}_0(\norm{\mathcal C_i(v_i)}_0+\norm{D\a^{\kappa}}_{L^{\infty} }\norm{\mathcal V}_0)
 \\&\le P(\norm{\eta^\kappa}_4, \norm{q}_4)\left(P(\norm{\eta}_4)\norm{v}_4
 +\norm{D\a^{\kappa}}_{L^{\infty} }P(\norm{\eta^\kappa}_4, \norm{v}_4)\right)
 \\&\leq  P\left(\sup_{t\in[0,T]} \mathfrak{E}^{\kappa}(t)\right).
 \end{split}
\end{equation}
By the definition of $\mathcal{V}$, since $\eta^{\kappa}=\Lambda_\kappa^2\eta$ on $\Gamma$ and $v=\partial_t\eta-\fk$, we have
\begin{equation}
\begin{split}
&- \int_{\Gamma} \pa_3 q\bp^2\Delta_* \Lambda_\kappa^2\eta_j \a^\kappa_{j3} \a^{\kappa}_{i\ell}N_\ell\mathcal{V}_i
\\&\quad = \int_{\Gamma} (-\nabla q\cdot N )\a^\kappa_{j3}\bp^2\Delta_* \Lambda_\kappa^2\eta_j  \a^{\kappa}_{i3} \mathcal{V}_i
 \\&\quad=\int_{\Gamma}(-\nabla q\cdot N )\a^\kappa_{j3}\bp^2\Delta_* \Lambda_\kappa^2\eta_j  \a^{\kappa}_{i3} (\bar\partial^2\Delta_* v_i- \bp^2\Delta_* \Lambda_\kappa^2\eta \cdot\nak v_i)
  \\&\quad=\int_{\Gamma}(-\nabla q\cdot N )\a^\kappa_{j3}\bp^2\Delta_* \Lambda_\kappa^2\eta_j  \a^{\kappa}_{i3} \left(\bar\partial^2\Delta_* \partial_t\eta_i - \bar\partial^2\Delta_* \psi^\kappa _i - \bp^2\Delta_* \Lambda_\kappa^2\eta \cdot\nak v_i\right)  .
\end{split}
\end{equation}
Note that
\begin{equation}
\begin{split}
&\int_{\Gamma}(-\nabla q\cdot N )\a^\kappa_{j3}\bp^2\Delta_* \Lambda_\kappa^2\eta_j  \a^{\kappa}_{i3} \bar\partial^2\Delta_* \partial_t\eta_i  \\
  &\quad
=\int_{\Gamma}(-\nabla q\cdot N )\a^\kappa_{j3}\bp^2\Delta_* \Lambda_\kappa \eta_j  \a^{\kappa}_{i3} \bar\partial^2\Delta_* \Lambda_\kappa \partial_t\eta_i
 + \int_{\Gamma}\bp^2\Delta_* \Lambda_\kappa \eta_j \left[\Lambda_\kappa,  -\nabla q\cdot N \a^\kappa_{j3}\a^{\kappa}_{i3}\right] \bar\partial^2\Delta_* \partial_t\eta_i \\
&\quad
=\hal \frac{d}{dt}\int_{\Gamma}(-\nabla q\cdot N )\abs{\a^\kappa_{i3}\bp^2\Delta_* \Lambda_\kappa \eta_i  }^2  +\hal \int_{\Gamma}\dt( \nabla q\cdot N ) \abs{\a^\kappa_{i3}\bp^2\Delta_* \Lambda_\kappa \eta_i  }^2
\\&\qquad +\int_{\Gamma} \nabla q\cdot N  \a^\kappa_{j3}\bp^2\Delta_* \Lambda_\kappa \eta_j  \dt\a^{\kappa}_{i3} \bar\partial^2\Delta_*  \Lambda_\kappa \eta_i
  +\int_{\Gamma}\bp^2\Delta_* \Lambda_\kappa \eta_j \left[\Lambda_\kappa, -\nabla q\cdot N \a^\kappa_{j3}\a^{\kappa}_{i3}\right] \bar\partial^2\Delta_* \partial_t\eta_i .
\end{split}
\end{equation}
Therefore, we obtain
\begin{equation}
\label{gg3}
\begin{split}
&- \int_{\Gamma} \pa_3 q\bp^2\Delta_* \Lambda_\kappa^2\eta_j \a^\kappa_{j3} \a^{\kappa}_{i\ell}N_\ell\mathcal{V}_i
 \\
&\quad
=\hal \frac{d}{dt}\int_{\Gamma}(-\nabla q\cdot N )\abs{\a^\kappa_{i3}\bp^2\Delta_* \Lambda_\kappa \eta_i  }^2
+\underbrace{\hal \int_{\Gamma}\dt(\nabla q\cdot N )\abs{\a^\kappa_{i3}\bp^2\Delta_* \Lambda_\kappa \eta_i  }^2   }_{\mathcal{I}_1}
\\&\qquad +\underbrace{\int_{\Gamma} \nabla q\cdot N  \a^\kappa_{j3}\bp^2\Delta_* \Lambda_\kappa \eta_j  \dt\a^{\kappa}_{i3} \bar\partial^2\Delta_*  \Lambda_\kappa \eta_i   }_{\mathcal{I}_2}
+\underbrace{\int_{\Gamma}\bp^2\Delta_* \Lambda_\kappa \eta_j \left[\Lambda_\kappa,  -\nabla q\cdot N \a^\kappa_{j3}\a^{\kappa}_{i3}\right] \bar\partial^2\Delta_* \partial_t\eta_i }_{\mathcal{I}_3}
\\&\qquad +\underbrace{\int_{\Gamma} \nabla q\cdot N \a^\kappa_{j3}\bp^2\Delta_* \Lambda_\kappa^2\eta_j  \a^{\kappa}_{i3}  \bp^2\Delta_* \Lambda_\kappa^2\eta \cdot\nak v_i   }_{\mathcal{I}_4}
  +\underbrace{\int_{\Gamma} \nabla q\cdot N \a^\kappa_{j3}\bp^2\Delta_* \Lambda_\kappa^2\eta_j  \a^{\kappa}_{i3}  \bar\partial^2\Delta_* \psi^\kappa_i  }_{\mathcal{I}_5}.
\end{split}
\end{equation}

We now estimate $\mathcal{I}_1$--$\i_5$. By the estimates \eqref{press}, we deduce
\begin{equation}
\label{i0}
\mathcal{I}_1 \ls \abs{\partial_3\partial_t q}_{L^{\infty}}\abs{\a^\kappa_{i3}\bp^2\Delta_* \Lambda_\kappa \eta_i  }^2\ls \norm{\partial_tq}_3\abs{\a^\kappa_{i3}\bp^2\Delta_* \Lambda_\kappa \eta_i  }^2\le P\left(\sup_{t\in[0,T]} \mathfrak{E}^{\kappa}(t)\right).
\end{equation}
By the identity \eqref{partialF}, we have
\begin{align}
\nonumber \i_2&= \int_{\Gamma} (-\nabla q\cdot N) \a^\kappa_{j3}\bp^2\Delta_* \Lambda_\kappa \eta_j   \a^{\kappa}_{i\ell}\pa_\ell\dt\eta^{\kappa}_m\a^\kappa_{m3} \bar\partial^2\Delta_*  \Lambda_\kappa \eta_i
\\&= \underbrace{\int_{\Gamma}  (-\nabla q\cdot N) \a^\kappa_{j3}\bp^2\Delta_* \Lambda_\kappa \eta_j   \a^{\kappa}_{i3}\pa_3\dt\eta^{\kappa}_m\a^\kappa_{m3} \bar\partial^2\Delta_*  \Lambda_\kappa \eta_i   }_{\i_{2a}}\nonumber
\\&\quad+\int_{\Gamma}  (-\nabla q\cdot N)  \a^\kappa_{j3}\bp^2\Delta_* \Lambda_\kappa \eta_j   \a^{\kappa}_{i\alpha}\pa_\alpha\dt\Lambda_{\kappa}^2\eta_m\a^\kappa_{m3} \bar\partial^2\Delta_*  \Lambda_\kappa \eta_i.\label{tmp1}
\end{align}
As usual, we obtain
\begin{equation}
\label{1a}
\i_{2a} \ls\abs{\a^\kappa_{j3}\bp^2\Delta_* \Lambda_\kappa \eta_j  }_0^2\abs{\partial_3 q\pa_3\dt\eta^{\kappa}_m\a^\kappa_{m3}}_{L^{\infty}}\le P\left(\sup_{t\in[0,T]} \mathfrak{E}^{\kappa}(t)\right).
\end{equation}
On the other hand, using $\dt\eta=v+\fk$ we have
\begin{align}
&\int_{\Gamma} (-\nabla q\cdot N) \a^\kappa_{j3}\bp^2\Delta_* \Lambda_\kappa \eta_j   \nonumber \a^{\kappa}_{i\alpha}\pa_{\alpha}\dt\Lambda_{\kappa}^2\eta_m\a^\kappa_{m3} \bar\partial^2\Delta_*  \Lambda_\kappa \eta_i
\\&\quad=\underbrace{\int_{\Gamma}  (-\nabla q\cdot N) \a^\kappa_{j3}\bp^2\Delta_* \Lambda_\kappa \eta_j   \a^{\kappa}_{i\alpha}\pa_\alpha \Lambda_{\kappa}^2 v_m \a^\kappa_{m3} \bar\partial^2\Delta_*  \Lambda_\kappa \eta_i   }_{\i_{2b}}\nonumber
\\&\qquad+\underbrace{\int_{\Gamma}  (-\nabla q\cdot N) \a^\kappa_{j3}\bp^2\Delta_* \Lambda_\kappa \eta_j   \a^{\kappa}_{i\alpha}\pa_\alpha \Lambda_{\kappa}^2 \psi^\kappa_m\a^\kappa_{m3} \bar\partial^2\Delta_*  \Lambda_\kappa \eta_i   }_{\i_{2c}}
\label{ttt}
\end{align}
To estimate $\i_{2c}$, the difficulty is that one can not have an $\kappa$-independent control of $\abs{\bar\partial^2\Delta_*  \Lambda_\kappa \eta_i }_0$. Our observation is that since $\fk\rightarrow 0$ as $\kappa\rightarrow 0$, this motives us to deduce the following estimates:
 \begin{equation}\label{lwuqing}
 \abs{\bar\partial   \fk}_{L^{\infty} }\leq \sqrt\kappa P(\norm{\eta}_4,\norm{v}_3).
 \end{equation}
Indeed, we can rewrite the boundary condition in \eqref{etaaa} as
\begin{align}
 \fk=\Delta_{*}^{-1}\mathbb{P} f^\kappa,\ f^\kappa:= \Delta_{*} (\eta_j-{\Lambda_{\kappa}^2\eta_j})\a^{\kappa}_{j\alpha}\partial_{\alpha}{\Lambda_{\kappa}^2 v}-\Delta_{*}{\Lambda_{\kappa}^2\eta}_j\a^{\kappa}_{j\alpha}\partial_{\alpha}(v-{\Lambda_{\kappa}^2 v}) .
\end{align}
By using Morrey's inequality and the Sobolev embeddings and the trace theorem,
\begin{equation}
\abs{g-\Lambda_{\kappa}g}_{L^\infty}\ls \sqrt{\kappa}\abs{\bp g}_{L^4} \ls \sqrt{\kappa}\abs{g}_{3/2}\ls \sqrt{\kappa}\norm{g}_{2},
\end{equation}
we obtain
\begin{equation}
\begin{split}
\abs{f^\kappa}_{L^\infty}&\ls\abs{\a^{\kappa}_{j\alpha}\partial_{\alpha}\Lambda_{\kappa}^2 v}_{L^\infty}\abs{\Delta_*\eta-\Lambda_{\kappa}^2\Delta_*\eta}_{L^\infty}+ \abs{\Delta_{*}{\Lambda_{\kappa}^2\eta}_j\a^{\kappa}_{j\alpha}}_{L^\infty}\abs{\partial_{\alpha}v-\Lambda_{\kappa}^2 \partial_{\alpha}v}_{L^\infty} \\&\ls\sqrt\kappa P(\norm{\eta}_4,\norm{v}_3).
\end{split}
\end{equation}
Then by the elliptic estimate and the Sobolev embeddings, we deduce
 \begin{equation}
\abs{\bar\partial \fk}_{L^{\infty} }\ls\abs{\bar\partial \fk}_{W^{1,4}}\ls\abs{f^\kappa}_{L^4}\ls\abs{f^\kappa}_{L^\infty} \ls \sqrt\kappa P(\norm{\eta}_4,\norm{v}_3),
 \end{equation}
 which proves \eqref{lwuqing}. Hence, by \eqref{lwuqing} together with \eqref{loss}, we have
\begin{equation}\label{22c}
\begin{split}
\i_{2c}&\ls \abs{\partial_3q\a^\kappa_{m3}\a^{\kappa}_{i\alpha}}_{L^{\infty} }\abs{\a^\kappa_{j3}\bp^2\Delta_* \Lambda_\kappa \eta_j }_0\abs{\bar\partial^2\Delta_* \Lambda_\kappa \eta_i}_0\abs{\pa_\alpha \Lambda_{\kappa}^2 \psi^\kappa_m}_{L^{\infty} }
\\ &\ls\abs{\partial_3q\a^\kappa_{m3}\a^{\kappa}_{i\alpha}}_{L^{\infty} }\abs{\a^\kappa_{j3}\bp^2\Delta_* \Lambda_\kappa \eta_j }_0\dfrac{1}{\sqrt{\kappa}}\abs{\eta}_{7/2}\sqrt{\kappa}P(\norm{\eta}_4,\norm{v}_3)
\\&\leq P\left(\sup_{t\in[0,T]} \mathfrak{E}^{\kappa}(t)\right).
\end{split}
\end{equation}
Note that the term $\i_{2b}$ is out of control by an $\kappa$-independent bound alone.

For $\i_3$, by the commutator estimates \eqref{es1-1/2}, \eqref{co123}, \eqref{press}, \eqref{tes1} and \eqref{fest1}, we obtain
\begin{align}
\nonumber
\i_3&\leq \abs{\bp^2\Delta_* \Lambda_\kappa \eta_j }_{-1/2}\abs{\left[\Lambda_\kappa, (-\nabla q\cdot N )\a^\kappa_{j3}\a^{\kappa}_{i3}\right] \bar\partial(\bp\Delta_* \partial_t\eta_i)}_{1/2}
\\\nonumber &\ls \abs{\bp\Delta_* \Lambda_\kappa \eta_j }_{1/2}\abs{ \partial_3 q\a^{\kappa}_{j3}\a^{\kappa}_{i3}}_{W^{1,\infty} }\abs{\bp\Delta_*\partial_t\eta}_{1/2}\nonumber
\\&\ls \norm{\eta}_{4}\norm{\partial_3 q\a^{\kappa}_{j3}\a^{\kappa}_{i3}}_3\norm{v+\psi^{\kappa}}_{4} \leq P\left(\sup_{t\in[0,T]} \mathfrak{E}^{\kappa}(t)\right).
\label{commm1}
\end{align}

To control $\i_4$, similarly as \eqref{tmp1}, we write
\begin{align}
\nonumber
\i_4=&\underbrace{\int_{\Gamma}(\nabla q\cdot N )\a^\kappa_{j3}\bp^2\Delta_* \Lambda_\kappa^2\eta_j  \a^{\kappa}_{m3}  \bp^2\Delta_*\Lambda_\kappa^2\eta_{i}\a^{\kappa}_{i3}\partial_3 v_m   }_{\i_{4a}}\\&+\underbrace{\int_{\Gamma}(\nabla q\cdot N )\a^\kappa_{j3}\bp^2\Delta_* \Lambda_\kappa^2\eta_j \a^{\kappa}_{i\alpha} \partial_{\alpha} v_m\a^{\kappa}_{m3} \bp^2\Delta_*\Lambda_\kappa^2\eta_i  }_{\i_{4b}}.\label{ttttt}
\end{align}
By the commutator estimates \eqref{es1-0}, we have
\begin{equation}
\label{ennennene}
\begin{split}
\abs{\a^\kappa_{j3}\bp^2\Delta_* \Lambda_\kappa^2\eta_j }_0&\ls \abs{[\Lambda_{\kappa}, \a^\kappa_{j3}]\bp (\bp\Delta_*\Lambda_{\kappa}\eta_j)}_0+\abs{\a^\kappa_{j3}\bp^2\Delta_*(\Lambda_\kappa\eta_j)}_0
\\&\ls\abs{\a^{\kappa}}_{W^{1,\infty} }\abs{\bp\Delta_*\Lambda_{\kappa}\eta_j}_0+\abs{\a^\kappa_{j3}\bp^2\Delta_*(\Lambda_\kappa\eta_j)}_0.
\end{split}
\end{equation}
Then we obtain
\begin{equation}\label{3b}
\i_{4a}\ls
\left(\abs{\a^\kappa_{j3}\bp^2\Delta_* \Lambda_\kappa^2\eta_j }_0^2+\abs{\ak}_{W^{1,\infty}}^2\norm{\eta}_4^2\right)\abs{\partial_3q\a^{\kappa}_{m3}\partial_3 v_m}_{L^{\infty} }
 \leq P\left(\sup_{t\in[0,T]} \mathfrak{E}^{\kappa}(t)\right).
\end{equation}
Note that the term $\i_{4b}$ is also out of control by an $\kappa$-independent bound alone.

Now we take care of $\i_{2b}$ and $\i_{4b}$. Notice that $\mathcal{I}_{2b}$ and $\mathcal{I}_{4b}$ are cancelled out in the limit $\kappa\rightarrow0$, however, it is certainly not the case when $\kappa>0$. This is most involved thing in the tangential energy estimates. Note also that we can not use the commutator estimate to interchange the position of the mollifier operator $\Lambda_\kappa$ in each of two terms since  $\abs{\bar\partial^2\Delta_*\eta}_{L^\infty}$ is out of control.
The key point here is to use the term $\i_5$, by the definition of the modification term $\fk$, to kill out both $\i_{2b}$ and $\i_{4b}$; this is exactly the reason that we have introduced $\fk$. By the boundary condition in \eqref{etaaa}, we deduce
\begin{equation}
\begin{split}
\i_5&=\int_{\Gamma}\nabla q\cdot N \a^\kappa_{j3}\bp^2\Delta_* \Lambda_\kappa^2\eta_j  \a^{\kappa}_{i3}  \bar\partial^2 \left( \Delta_{*}\eta_m \a^\kappa_{m\alpha}\pa_\alpha{\Lambda_{\kappa}^2 v_i}-\Delta_{*}\Lambda_{\kappa}^2\eta_m \a^\kappa_{m\alpha}\pa_\alpha{ v_i}\right)
\\&=\int_{\Gamma} \nabla q\cdot N  \a^\kappa_{j3}\bp^2\Delta_* \Lambda_\kappa^2\eta_j  \a^{\kappa}_{i3}    \bar\partial^2 \Delta_{*}\eta_m \a^\kappa_{m\alpha}\pa_\alpha{\Lambda_{\kappa}^2 v_i}
\\&\quad+\underbrace{\int_{\Gamma} (-\nabla q\cdot N ) \a^\kappa_{j3}\bp^2\Delta_* \Lambda_\kappa^2\eta_j  \a^{\kappa}_{i3}
\bar\partial^2 \Delta_{*}\Lambda_{\kappa}^2\eta_m \a^\kappa_{m\alpha}\pa_\alpha{ v_i} }_{-\i_{4b}}
\\&\quad+\underbrace{\int_{\Gamma} \nabla q\cdot N \a^\kappa_{j3}\bp^2\Delta_* \Lambda_\kappa^2\eta_j  \a^{\kappa}_{i3}  \left( \left[\bar\partial^2, \a^\kappa_{m\alpha}\pa_\alpha{\Lambda_{\kappa}^2 v_i}\right]\Delta_{*}\eta_m -\left[\bar\partial^2,\a^\kappa_{m\alpha}\pa_\alpha{ v_i}\right] \Delta_{*}\Lambda_{\kappa}^2\eta_m \right)  }_{\i_{5a}}
\label{tmp2}
\end{split}
\end{equation}
By doing estimates as usual and using \eqref{ennennene} again, we have
\begin{equation}
\label{2a}
\begin{split}
\i_{5a} &\ls |\partial_3q\a^{\kappa}_{i3}|_{L^{\infty}(\Omega)}\abs{\a^\kappa_{j3}\bp^2\Delta_* \Lambda_\kappa^2\eta_j }_0\abs{\left(\left[\bar\partial^2, \a^\kappa_{m\alpha}\pa_\alpha{\Lambda_{\kappa}^2 v_i}\right]\Delta_{*}\eta_m -\left[\bar\partial^2,\a^\kappa_{m\alpha}\pa_\alpha{ v_i}\right] \Delta_{*}\Lambda_{\kappa}^2\eta_m \right)}_0
\\
&\le P\left(\sup_{t\in[0,T]} \mathfrak{E}^{\kappa}(t)\right).
\end{split}
\end{equation}
We rewrite the first term as
\begin{align}
\nonumber&\int_{\Gamma} \nabla q\cdot N \a^\kappa_{j3}\bp^2\Delta_* \Lambda_\kappa^2\eta_j  \a^{\kappa}_{i3}    \bar\partial^2 \Delta_{*}\eta_m \a^\kappa_{m\alpha}\pa_\alpha{\Lambda_{\kappa}^2 v_i}
\\\nonumber&\quad=\underbrace{\int_{\Gamma} \nabla q\cdot N  \a^\kappa_{j3}\bp^2\Delta_* \Lambda_\kappa \eta_j  \a^{\kappa}_{i3}    \bar\partial^2 \Delta_{*} \Lambda_\kappa \eta_m \a^\kappa_{m\alpha}\pa_\alpha{\Lambda_{\kappa}^2 v_i}}_{-\i_{2b}}
\\ &\qquad+\underbrace{\int_{\Gamma}\bp^2\Delta_* \Lambda_\kappa \eta_j \left[\Lambda_\kappa,  \nabla q\cdot N \a^\kappa_{j3}\a^{\kappa}_{i3} \a^\kappa_{m\alpha}\pa_\alpha{\Lambda_{\kappa}^2 v_i}\right]\bar\partial^2 \Delta_{*}\eta_m   }_{\i_{5b}}.\label{temp3}
\end{align}
By arguing similarly as \eqref{commm1} for $\i_3$, we have
\begin{equation}
\label{pw}
\i_{5b}\le \norm{\eta}_{4}\norm{\partial_3 q\a^\kappa_{j3}\a^{\kappa}_{i3} \a^\kappa_{m\alpha}\pa_\alpha{\Lambda_{\kappa}^2 v_i}}_3\norm{\eta}_{4}\leq P\left(\sup_{t\in[0,T]} \mathfrak{E}^{\kappa}(t)\right).
\end{equation}

Now combining \eqref{tmp1}, \eqref{ttt}, \eqref{ttttt}, \eqref{tmp2} and \eqref{temp3},  and using the estimates \eqref{1a}, \eqref{22c}, \eqref{3b}, \eqref{2a} and \eqref{pw}, we deduce
\begin{equation}\label{ggiip}
\i_2+\i_4+\i_5=\i_{2a}+\i_{2c}+\i_{4a}+\i_{5a}+\i_{5b}\le P\left(\sup_{t\in[0,T]} \mathfrak{E}^{\kappa}(t)\right).
\end{equation}

Finally, combining \eqref{gg1}, \eqref{gg2} and \eqref{gg3}, and using the estimates \eqref{ggg0}, \eqref{gggg3}, \eqref{commm1} and \eqref{ggiip}, we obtain
\begin{equation}\label{ohoh}
 \dfrac{d}{dt}\left(\int_{\Omega}\abs{\mathcal{V}}^2+\abs{\bar\partial^2\Delta_*( b_0\cdot\nabla\eta)}^2 + \int_{\Gamma}(-\nabla q\cdot N )\abs{\bp^2\Delta_* \Lambda_\kappa \eta_i \a^\kappa_{i3} }^2   \right)\leq  P\left(\sup_{t\in[0,T]} \mathfrak{E}^{\kappa}(t)\right).
\end{equation}
Integrating \eqref{ohoh} directly in time, by the a priori assumption \eqref{ini2}, we have
\begin{equation}\label{ohoh1}
\norm{\mathcal V(t)}_0^2+\norm{\bar\partial^4 (b_0\cdot\nabla \eta)(t)}_0^2+\abs{\bar\partial^2\Delta_*\Lambda_{\kappa}\eta_i \a^{\kappa}_{i3}(t)}_0^2\leq M_0+TP\left(\sup_{t\in[0,T]} \mathfrak{E}^{\kappa}(t)\right).
\end{equation}
By the definition of $\mathcal{V}$, using \eqref{ohoh1} and  \eqref{etaest}, using the fundamental theorem of calculous, we get
\begin{equation}
\begin{split}
\norm{\bar\partial^4 v(t)}_0^2&\ls \norm{\bar\partial^2\Delta_* v(t)}_0^2\ls\norm{\mathcal V(t)}_0^2+\norm{\bar\partial^2\Delta_*\eta}_0^2\norm{\a^{\kappa}_{j\ell}\partial_{\ell} v}_{L^{\infty}}^2
\\&\leq M_0+TP\left(\sup_{t\in[0,T]} \mathfrak{E}^{\kappa}(t)\right).
\end{split}
\end{equation}
We thus conclude the proposition.
\end{proof}
\subsubsection{Curl and divergence estimates}\label{curle}
In view of the Hodge-type elliptic estimates, we can control one vector's all derivatives just by its curl, divergence and normal trace. Thus, we now derive the curl and divergence estimates.
We begin with the $\curl$ estimates.
\begin{proposition}
For $t\in [0,T]$ with $T\le T_\kappa$, it holds that
\begin{equation}
\label{curlest}
\norm{\curl v(t)}_{3}^2+\norm{\curl(b_0\cdot\nabla\eta)(t)}_{3}^2\leq M_0+TP\left(\sup_{t\in[0,T]} \mathfrak{E}^{\kappa}(t)\right).
\end{equation}
\end{proposition}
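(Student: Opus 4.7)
The plan is to derive $H^3$ estimates for $\curl v$ and $\curl(b_0\cdot\nabla\eta)$ via an evolution-based coupled argument, exploiting the geometric conditions $\diverge b_0=0$ in $\Omega$ and $b_0\cdot N=0$ on $\Gamma$ from \eqref{bcond}. The natural objects to work with are the geometric vorticities $\omega^\kappa:=\curl_{\mathcal{A}^\kappa}v$ and $\zeta^\kappa:=\curl_{\mathcal{A}^\kappa}(b_0\cdot\nabla\eta)$, where $(\curl_{\mathcal{A}^\kappa}f)_i:=\epsilon_{ijk}\mathcal{A}^\kappa_{jm}\partial_m f_k$, rather than standard $\curl$. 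The reason is that $\curl_{\mathcal{A}^\kappa}\nabla_{\mathcal{A}^\kappa}q$ is of strictly reduced order: by the antisymmetry of $\epsilon_{ijk}\mathcal{A}^\kappa_{jm}\mathcal{A}^\kappa_{k\ell}$ in $(m,\ell)$ together with the symmetry of $\partial_m\partial_\ell q$,
\begin{equation*}
\curl_{\mathcal{A}^\kappa}\nabla_{\mathcal{A}^\kappa}q = \epsilon_{ijk}\mathcal{A}^\kappa_{jm}(\partial_m\mathcal{A}^\kappa_{k\ell})\partial_\ell q,
\end{equation*}
which carries only one derivative of $q$ rather than $\partial^2 q$, and is therefore controlled through Proposition \ref{pressure} and Lemma \ref{preest}.

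Applying $\epsilon_{ijk}\mathcal{A}^\kappa_{jm}\partial_m$ to the momentum equation in \eqref{approximate} and commuting $b_0\cdot\nabla$ past $\curl_{\mathcal{A}^\kappa}$ in the Lorentz force term $(b_0\cdot\nabla)(b_0\cdot\nabla\eta)$, I obtain
\begin{equation*}
\partial_t\omega^\kappa - (b_0\cdot\nabla)\zeta^\kappa = R_1,
\end{equation*}
with $R_1$ collecting $[\partial_t,\curl_{\mathcal{A}^\kappa}]v$, the pressure commutator, and $[\curl_{\mathcal{A}^\kappa},b_0\cdot\nabla](b_0\cdot\nabla\eta)$. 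Parallel manipulation of $\partial_t(b_0\cdot\nabla\eta)=b_0\cdot\nabla v+b_0\cdot\nabla\psi^\kappa$, coming from $\partial_t\eta=v+\psi^\kappa$, gives
\begin{equation*}
\partial_t\zeta^\kappa - (b_0\cdot\nabla)\omega^\kappa = R_2,
\end{equation*}
where $R_2$ contains $\curl_{\mathcal{A}^\kappa}(b_0\cdot\nabla\psi^\kappa)$ and analogous commutators, bounded in $H^3$ by $P(\mathfrak E^\kappa)$ using \eqref{fest22} and Section 3.2. The key step is an $H^3$ energy estimate: applying $D^\alpha$ with $|\alpha|\leq 3$, pairing in $L^2$ against $D^\alpha\omega^\kappa$ and $D^\alpha\zeta^\kappa$ respectively, and summing, the leading cross-terms assemble into a perfect divergence,
\begin{equation*}
\int_\Omega\!\bigl(D^\alpha\omega^\kappa\cdot(b_0\cdot\nabla)D^\alpha\zeta^\kappa + D^\alpha\zeta^\kappa\cdot(b_0\cdot\nabla)D^\alpha\omega^\kappa\bigr) = \int_\Gamma(b_0\cdot N)\,D^\alpha\omega^\kappa\cdot D^\alpha\zeta^\kappa - \int_\Omega(\diverge b_0)\,D^\alpha\omega^\kappa\cdot D^\alpha\zeta^\kappa = 0
\end{equation*}
by \eqref{bcond}. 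The remaining $[D^\alpha,b_0\cdot\nabla]$ commutators are controlled by \eqref{co1}, and $D^\alpha R_j$ are absorbed into $P(\mathfrak E^\kappa)$. Integration in time then yields $\|\omega^\kappa(t)\|_3^2+\|\zeta^\kappa(t)\|_3^2\leq M_0+TP(\sup_{[0,T]}\mathfrak E^\kappa)$. Finally, since $\mathcal{A}^\kappa|_{t=0}=I$ and the time-integration of $\partial_t\eta=v+\psi^\kappa$ together with \eqref{tes0} and \eqref{fest1} gives $\|\mathcal{A}^\kappa-I\|_3\leq TP(\mathfrak E^\kappa)$, one recovers the bounds on $\curl v$ and $\curl(b_0\cdot\nabla\eta)$ from those on $\omega^\kappa$ and $\zeta^\kappa$ up to an $\mathcal{O}(T)P(\mathfrak E^\kappa)$ error, yielding \eqref{curlest}.

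The chief obstacle I anticipate is the apparent derivative loss in the Lorentz force: a naive $H^3$ bound for $\curl_{\mathcal{A}^\kappa}((b_0\cdot\nabla)^2\eta)$ would demand $\eta\in H^5$, which exceeds the regularity encoded in $\mathfrak E^\kappa$. The symmetric coupling of $\omega^\kappa$ and $\zeta^\kappa$ is precisely what bypasses this loss: the top-order pieces $(b_0\cdot\nabla)\zeta^\kappa$ and $(b_0\cdot\nabla)\omega^\kappa$ cancel exactly via the divergence identity above, forcing one to estimate the vorticity and the current as a pair rather than separately. A secondary subtlety is to verify that the $\partial\mathcal{A}^\kappa\cdot\partial q$-type pressure commutator in $R_1$ is absorbed uniformly in $\kappa$; this will rely on the combination of $\|q\|_4$ and $\|\partial_t q\|_3$ from Proposition \ref{pressure} with the product estimate \eqref{co0} and, where needed, integration by parts on the $\partial_m$ in the geometric identity above to shift the top derivative off $\mathcal{A}^\kappa$.
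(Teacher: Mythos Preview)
Your approach is correct and essentially the same as the paper's: the paper also works with $\curl_{\mathcal{A}^\kappa}v$ and $\curl_{\mathcal{A}^\kappa}(b_0\cdot\nabla\eta)$, obtains the coupled evolution by applying $\curl_{\mathcal{A}^\kappa}$ to the momentum equation, integrates by parts along $b_0\cdot\nabla$ using \eqref{bcond} to produce the time derivative of $\|D^3\curl_{\mathcal{A}^\kappa}(b_0\cdot\nabla\eta)\|_0^2$, and passes back to Euclidean curl via $\mathcal{A}^\kappa-I=\int_0^t\partial_t\mathcal{A}^\kappa$. One minor simplification you missed: in fact $\curl_{\mathcal{A}^\kappa}\nabla_{\mathcal{A}^\kappa}q=0$ identically (the residual $\epsilon_{ijk}\mathcal{A}^\kappa_{jm}(\partial_m\mathcal{A}^\kappa_{k\ell})\partial_\ell q$ also vanishes after expanding $\partial_m\mathcal{A}^\kappa_{k\ell}$ via \eqref{partialF} and reusing the same antisymmetry), so the pressure drops out cleanly and your secondary concern about it is moot.
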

\begin{proof}
By taking the $\curl_{\a^{\kappa}}$ of the second equation in \eqref{approximate}, we have that
\begin{equation}
\curl_{\a^{\kappa}} \partial_tv=\curl_{\a^{\kappa}}\left((b_0 \cdot\nabla)^2\eta\right),
\end{equation}
where $(\curl_{\ak}g)_i=\epsilon_{ij\ell}\a^{\kappa}_{jm}\partial_m g_{\ell}$.
It follows that
\begin{equation}\label{curl}
\partial_t(\curl_{\a^{\kappa}}v)_i-b_0\cdot\nabla\left(\curl_{\a^{\kappa}}\left(b_0\cdot\nabla \eta\right)\right)_i=\left(\left[\curl_{\a^{\kappa}}, b_0\cdot\nabla\right](b_0\cdot\nabla \eta)\right)_i+\epsilon_{ij\ell} \partial_t\a^{\kappa}_{jm} \partial_mv_{\ell}.
\end{equation}
Apply further $D^3$ to \eqref{curl} to get
\begin{equation}\label{curl2}
\partial_t(D^3\curl_{\a^{\kappa}}v)_i-b_0\cdot\nabla\left(D^3\curl_{\a^{\kappa}}\left(b_0\cdot\nabla \eta\right)\right)_i=F_i,
\end{equation}
with
\begin{equation}
F_i:=[D^3,b_0\cdot\nabla](\curl_{\a^{\kappa}}(b_0\cdot\nabla \eta))+D^3\left(\left(\left[\curl_{\a^{\kappa}}, b_0\cdot\nabla\right](b_0\cdot\nabla \eta)\right)_i+\epsilon_{ij\ell} \partial_t\a^{\kappa}_{jm} \partial_mv_{\ell}\right).
\end{equation}

Taking the $L^2$ inner product of \eqref{curl2} with $D^3\curl_{\a^{\kappa}}v$, by the integration by parts, we get
\begin{align}\label{j00}
&\dfrac{1}{2}\dfrac{d}{dt}\int_{\Omega}\abs{D^3\curl_{\a^{\kappa}}v}^2+\underbrace{\int_{\Omega}D^3\curl_{\a^{\kappa}}(b_0\cdot\nabla \eta)\cdot D^3\curl_{\a^{\kappa}}(b_0\cdot\nabla v)}_{\mathcal{J}_1}\\&\quad=\underbrace{\int_\Omega F \cdot D^3\curl_{\a^{\kappa}}v}_{\j_2}+\underbrace{\int_{\Omega}D^{3}\curl_{\a^{\kappa}}(b_0\cdot\nabla \eta)\cdot \left[D^3\curl_{\a^{\kappa}}, b_0\cdot\nabla\right] v}_{\j_3}.\nonumber
\end{align}
Since $v=\dt\eta-\fk$, we have
\begin{equation}
\label{j0}
\begin{split}
\j_1=&\dfrac{1}{2}\dfrac{d}{dt}\int_{\Omega}\abs{D^3\curl_{\a^{\kappa}}(b_0\cdot\nabla \eta)}^2
\underbrace{-\int_{\Omega}D^3\curl_{\a^{\kappa}}(b_0\cdot\nabla \eta)\cdot D^3\curl_{\a^{\kappa}}(b_0\cdot\nabla \fk)}_{\j_{1a}}\\&-\underbrace{\dfrac{1}{2}\int_{\Omega}D^3(\curl_{\a^{\kappa}}(b_0\cdot\nabla \eta))_i\cdot D^3(\epsilon_{ij\ell} \partial_t\a^{\kappa}_{jm} \partial_m(b_0\cdot\nabla\eta_\ell))}_{\j_{1b}}.
\end{split}
\end{equation}
By the estimates \eqref{fest22} and \eqref{tes1}, we obtain
\begin{equation}
\label{j0b}
\j_{1a}\ls \norm{b_0\cdot\nabla\eta}_4\norm{\a^{\kappa}}_3^2\norm{b_0\cdot\nabla\fk}_4\leq P\left(\sup_{t\in[0,T]} \mathfrak{E}^{\kappa}(t)\right).
\end{equation}
By the identity \eqref{partialF} and the estimates \eqref{tes0} and \eqref{fest1}, we have
\begin{equation}
\j_{1b}
\ls  \norm{\a^{\kappa}}_3\norm{D(b_0\cdot\nabla\eta)}_3\norm{\partial_t\a^{\kappa}}_3\norm{D(b_0\cdot\nabla\eta)}_3
  \le P\left(\sup_{t\in[0,T]} \mathfrak{E}^{\kappa}(t)\right).
\end{equation}
Hence, we obtain
\begin{equation}
\label{j1}
\j_1\ge \dfrac{1}{2}\dfrac{d}{dt}\int_{\Omega}\abs{D^3\curl_{\a^{\kappa}}(b_0\cdot\nabla \eta)}^2
-P\left(\sup_{t\in[0,T]} \mathfrak{E}^{\kappa}(t)\right).
\end{equation}

We now turn to estimate the right hand side of \eqref{j00}. By the estimates \eqref{tes1}--\eqref{tes0} and the identity \eqref{partialF}, we may have
\begin{equation}
\label{j2}
\begin{split}
\j_2&\leq  \norm{F}_0 \norm{\curl_{\ak} v}_3
\leq P\left(\norm{b_0}_4,  \norm{b_0\cdot\nabla\eta}_4, \norm{Dv}_3, \norm{b_0\cdot\nabla\a^{\kappa}}_3, \norm{\a^{\kappa}}_3\right)
\\&\leq P\left(\sup_{t\in[0,T]} \mathfrak{E}^{\kappa}(t)\right).
\end{split}
\end{equation}
Similarly,
\begin{align}\label{j3}
\nonumber\j_3&\ls \norm{D^3\curl_{\a^{\kappa}}(b_0\cdot\nabla \eta)}_0\norm{[D^3\curl_{\a^{\kappa}}, b_0\cdot\nabla] v}_0
\\&\leq P\left(\norm{b_0}_4,  \norm{b_0\cdot\nabla\eta}_4, \norm{Dv}_3, \norm{b_0\cdot\nabla\a^{\kappa}}_3, \norm{\a^{\kappa}}_3\right)
 \leq P\left(\sup_{t\in[0,T]} \mathfrak{E}^{\kappa}(t)\right).
\end{align}

Consequently, plugging the estimates \eqref{j1}--\eqref{j3} into \eqref{j00}, we obtain
\begin{equation} \label{kllj}
\dfrac{d}{dt}\int_{\Omega}\abs{D^3\curl_{\a^{\kappa}}v}^2+\abs{D^3\curl_{\a^{\kappa}}(b_0\cdot\nabla \eta)}^2
\le P\left(\sup_{t\in[0,T]} \mathfrak{E}^{\kappa}(t)\right).
\end{equation}
Integrating \eqref{kllj} directly in time, and applying the fundamental theorem of calculous,
\begin{equation}
\norm{\curl  f(t)}_3\le \norm{\curl_{\a^{\kappa}} f(t)}_3+\norm{\int_0^t\partial_t\a^{\kappa} d\tau D f(t)}_3,
\end{equation}
we then conclude the proposition.
\end{proof}

We now derive the divergence estimates.
\begin{proposition}
For $t\in [0,T]$ with $T\le T_\kappa$, it holds that
\begin{equation}
\label{divest}
\norm{\Div v(t)}_3^2+\norm{\Div(b_0\cdot\nabla\eta)(t)}_3^2 \leq TP\left(\sup_{t\in[0,T]} \mathfrak{E}^{\kappa}(t)\right).
\end{equation}
\end{proposition}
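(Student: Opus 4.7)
The two quantities enjoy different mechanisms of smallness. For $\Div v$, the smallness will come from the incompressibility $\Div_{\a^\kappa}v=0$, which, after rewriting $\Div v = (\delta_{ij}-\a^\kappa_{ij})\partial_j v_i$, reduces the bound to controlling the product of $\delta-\a^\kappa$ (small in time since $\a^\kappa|_{t=0}=I$) with $\nabla v$. The plan is to combine the fundamental theorem of calculus with the identity \eqref{partialF} for $\partial_t\a^\kappa$ and the estimate $\norm{\partial_t\eta^\kappa}_4\le P(\mathfrak{E}^\kappa)$ from \eqref{tes0} to deduce $\norm{\delta-\a^\kappa(t)}_3\le TP(\sup\mathfrak{E}^\kappa)$. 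The $H^3$-algebra property together with the product estimate \eqref{co0} will then immediately give $\norm{\Div v(t)}_3 \le TP(\sup\mathfrak{E}^\kappa)$, and (using $T\le 1$) squaring yields the desired bound.

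For $\Div(b_0\cdot\nabla\eta)$, the smallness will instead come from the initial datum: $\Div(b_0\cdot\nabla\eta)|_{t=0}=\Div b_0=0$. I will integrate the first equation of \eqref{approximate} in time,
\begin{equation*}
\Div(b_0\cdot\nabla\eta)(t) = \int_0^t \Div\bigl(b_0\cdot\nabla(v+\psi^\kappa)\bigr)(s)\,ds,
\end{equation*}
and aim to bound the integrand uniformly in $H^3$ by $P(\mathfrak{E}^\kappa)$. Expanding $\Div(b_0\cdot\nabla v)=(\partial_i b_{0k})\partial_k v_i + b_0\cdot\nabla\Div v$, the first summand is already in $H^3$ by the algebra property, while for the second summand I will exploit $\Div_{\a^\kappa}v=0$ through the decomposition
\begin{equation*}
\Div(b_0\cdot\nabla v) = \Div_{\a^\kappa}(b_0\cdot\nabla v) + (\delta-\a^\kappa):\nabla(b_0\cdot\nabla v).
\end{equation*}
Applying $b_0\cdot\nabla$ to the constraint $\a^\kappa_{ij}\partial_j v_i = 0$ yields the crucial identity
\begin{equation*}
\Div_{\a^\kappa}(b_0\cdot\nabla v) = \a^\kappa_{ij}(\partial_j b_{0k})\partial_k v_i - \bigl((b_0\cdot\nabla)\a^\kappa_{ij}\bigr)\partial_j v_i,
\end{equation*}
which contains no second-order derivatives of $v$ and is therefore a product of uniformly $H^3$ quantities once $\norm{b_0\cdot\nabla\a^\kappa}_3$ is controlled through \eqref{partialF} and the estimate \eqref{tes2} for $b_0\cdot\nabla\eta^\kappa$. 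The correction $(\delta-\a^\kappa):\nabla(b_0\cdot\nabla v)$ will be handled by the smallness $\norm{\delta-\a^\kappa}_3\le TP$ already secured in the first step. The $\psi^\kappa$-contribution is treated identically using \eqref{fest1}--\eqref{fest22}. Integrating in time then supplies the required factor of $T$.

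The main obstacle will be the $H^3$ bookkeeping of the residual term $(\delta-\a^\kappa):\nabla(b_0\cdot\nabla v)$: although $\delta-\a^\kappa$ carries a small factor of $T$, the factor $b_0\cdot\nabla v$ sits uniformly only in $H^3$, so its gradient is a priori only in $H^2$, and a naive product estimate falls short of the $H^3$-bound we need. The expected remedy is to integrate by parts inside the $H^3$-norm, transferring one derivative from $b_0\cdot\nabla v$ onto $\delta-\a^\kappa$, and to exploit the Piola identity \eqref{polia} to rewrite the resulting $\partial_j\a^\kappa_{ij}$ as a lower-order quantity involving $\a^\kappa\nabla\log J^\kappa$. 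This converts the dangerous product into a divergence of an $H^3$-field plus tame lower-order contributions that fit into the desired estimate.
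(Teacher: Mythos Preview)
Your treatment of $\Div v$ is fine and coincides with the paper's. The gap is in the second part, and it is precisely where you flagged the ``main obstacle'': your proposed remedy does not actually close the derivative count.

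You decompose $\Div(b_0\cdot\nabla v)=\Div_{\a^\kappa}(b_0\cdot\nabla v)+(\delta-\a^\kappa):\nabla(b_0\cdot\nabla v)$ and then rewrite the dangerous piece as
\[
(\delta_{ij}-\a^\kappa_{ij})\partial_j(b_0\cdot\nabla v)_i=\partial_j\bigl[(\delta_{ij}-\a^\kappa_{ij})(b_0\cdot\nabla v)_i\bigr]+(\partial_j\a^\kappa_{ij})(b_0\cdot\nabla v)_i.
\]
Neither term on the right is in $H^3$. The first is the divergence of an $H^3$-field, hence only in $H^2$; the second is the product of $\partial_j\a^\kappa_{ij}\in H^2$ (one derivative of $\a^\kappa\in H^3$) with $b_0\cdot\nabla v\in H^3$, hence again only in $H^2$. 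The Piola identity merely rewrites $\partial_j\a^\kappa_{ij}$ as $-\a^\kappa_{ij}\partial_j\log J^\kappa$, which is still $H^2$ and buys nothing. The underlying issue is that $b_0\cdot\nabla v$ sits only in $H^3$, so any expression involving its gradient cannot be placed in $H^3$ by algebraic manipulation.

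The paper avoids this by evolving $\Div_{\a^\kappa}(b_0\cdot\nabla\eta)$ rather than $\Div(b_0\cdot\nabla\eta)$: from $\Div_{\a^\kappa}v=0$ one obtains
\[
\partial_t\bigl(\Div_{\a^\kappa}(b_0\cdot\nabla\eta)\bigr)=\Div_{\a^\kappa}(b_0\cdot\nabla\psi^\kappa)+[\Div_{\a^\kappa},b_0\cdot\nabla]v+\partial_t\a^\kappa_{i\ell}\,\partial_\ell(b_0\cdot\nabla\eta_i),
\]
and each term on the right is genuinely in $H^3$ (using $\norm{b_0\cdot\nabla\psi^\kappa}_4$ from \eqref{fest22}, $\norm{b_0\cdot\nabla\eta^\kappa}_4$ from \eqref{tes2} to place $b_0\cdot\nabla\a^\kappa\in H^3$, and $b_0\cdot\nabla\eta\in H^4$ from $\mathfrak{E}^\kappa$). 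Integrating in time yields $\norm{\Div_{\a^\kappa}(b_0\cdot\nabla\eta)}_3^2\le TP$. Only then is the conversion to the Euclidean divergence made, via
\[
\Div(b_0\cdot\nabla\eta)=\Div_{\a^\kappa}(b_0\cdot\nabla\eta)+(\delta-\a^\kappa):\nabla(b_0\cdot\nabla\eta),
\]
and now the correction involves $\nabla(b_0\cdot\nabla\eta)\in H^3$ rather than $\nabla(b_0\cdot\nabla v)\in H^2$. That single swap---$\eta$ for $v$ in the gradient term---is what makes the estimate close; your scheme differentiates $v$ once too often.
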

\begin{proof}
From $\divak v=0$, we see that
\begin{equation}
\Div v=-\int_0^t\partial_t\a^{\kappa}_{ij}\,d\tau\partial_jv_i.
\end{equation}
Hence, it is clear that by the identity \eqref{partialF} and the estimates \eqref{tes1} and \eqref{tes0},
\begin{equation}\label{divv1}
\norm{\Div v(t)}_{3}^2\le TP\left(\sup_{t\in[0,T]} \mathfrak{E}^{\kappa}(t)\right).
\end{equation}

From $\divak v=0$ again, we have
\begin{equation*}
\Div_{\a^{\kappa}}(b_0\cdot\nabla v)=[\Div_{\a^{\kappa}},b_0\cdot\nabla] v.
\end{equation*}
This together with the equation $v=\partial_t\eta-\fk$, we have
\begin{equation}
\label{divb}
\begin{split}
\partial_t\left(\Div_{\ak}\left(b_0\cdot \nabla \eta\right)\right)=\Div_{\a^{\kappa}}\left(b_0\cdot\nabla \fk\right)+[\Div_{\a^{\kappa}},b_0\cdot\nabla] v+ \partial_t \a^{\kappa}_{i\ell} \partial_{\ell}\left(b_0\cdot\nabla\eta_i\right).
\end{split}
\end{equation}
This implies that, by doing the $D^3$ energy estimate and using the estimates \eqref{tes1}--\eqref{fest22} and the identity \eqref{partialF},
\begin{equation}\label{di2}
\norm{\Div_{\ak}\left(b_0\cdot \nabla \eta\right)}_{3}^2
  \leq  TP\left(\sup_{t\in[0,T]} \mathfrak{E}^{\kappa}(t)\right).
\end{equation}
And then applying the fundamental theorem of calculous, and
\begin{equation}
\norm{\Div  f(t)}_3\le \norm{\Div_{\a^{\kappa}} f(t)}_3+\norm{\int_0^t\partial_t\a^{\kappa} d\tau D f(t)}_3,
\end{equation}
we arrive at
\begin{equation}\label{divv12}
\norm{\Div(b_0\cdot \nabla \eta)}_{3}^2
  \leq  TP\left(\sup_{t\in[0,T]} \mathfrak{E}^{\kappa}(t)\right).
\end{equation}

Consequently, we conclude the proposition by the estimates \eqref{divv1} and \eqref{divv12}.
\end{proof}

\subsubsection{Synthesis}

We now collect the estimates derived previously to conclude our estimates and also verify the a priori assumptions \eqref{ini2} and \eqref{inin3}. That is, we shall now present the
\begin{proof}[Proof of Theorem \ref{th43}]
It follows from the normal trace estimates \eqref{gga} that
\begin{equation}
\abs{\bar\partial^4 v\cdot N}_{-1/2}\ls \norm{\bar\partial^4 v}_0+\norm{\Div\bar\partial^3v}_0.
\end{equation}
Combining this and the estimates \eqref{00estimate}, \eqref{teee}, \eqref{curlest} and \eqref{divest},  by using the Hodge-type elliptic estimates \eqref{hodd} of Lemma \ref{hodge}, we obtain
\begin{align}
\norm{v}_4^2 \ls \norm{v}_0^2+\norm{\Div v}_{3}^2+\norm{\curl v}_{3}^2+\abs{\bar\partial v\cdot N}_{5/2}^2
\leq M_0+TP\left(\sup_{t\in[0,T]} \mathfrak{E}^{\kappa}(t)\right).
\end{align}
Similarly, we have
\begin{align}
\norm{b_0\cdot\nabla\eta}_4^2 \leq M_0+TP\left(\sup_{t\in[0,T]} \mathfrak{E}^{\kappa}(t)\right).
\end{align}
By these two estimates and \eqref{etaest}, \eqref{teee}, we finally get that
\begin{equation*}
\sup_{[0,T]}\mathfrak{E}^{\kappa}(t)\leq M_0+TP\left(\sup_{t\in[0,T]} \mathfrak{E}^{\kappa}(t)\right).
\end{equation*}
This provides us with a time of existence $T_1$ independent of $\kappa$ and an estimate on $[0,T_1]$ independent of $\kappa$ of the type:
\begin{equation}
\sup_{[0,T_1]}\mathfrak{E}^{\kappa}(t)\leq 2M_0.
\end{equation}
Since by \eqref{taylor}, $-\nabla q_0\cdot N\ge \lambda$ on $\Gamma$, $\a^\kappa(0)=I$ and $J^\kappa(0)=1$, the bound \eqref{bound} and \eqref{press} verify in turn the a priori bounds \eqref{ini2} and \eqref{inin3} by the fundamental theorem of calculous with taking $T_1$ smaller if necessary.
The proof of Theorem \ref{th43} is thus completed.
\end{proof}

\section{Construction of solutions to nonlinear $\kappa$-approximate problem}\label{exist}

Throughout this section, we fix $0<\kappa<1$. Our goal of this section is to construct solutions to the nonlinear $\kappa$-approximate problem \eqref{approximate} on a time interval $[0,T_\kappa]$ for some $T_\kappa>0$.
\subsection{The linearized $\kappa$-approximate problem}\label{lso}

Given $\psi$ and $\tilde\a^\kappa =\a(\tilde \eta^\kappa)$ (and $\tilde J^{\kappa}$, etc.) with $\tilde \eta^{\kappa}$  determined by the given $\tilde\eta$ through \eqref{etadef}. We consider the following linearized $\kappa$-approximate problem
\begin{equation}\label{lvapproximate}
	\begin{cases}
	\partial_t\eta =v+\psi &\text{in } \Omega,\\
	\partial_tv  +\nabla_{\tilde\a^\kappa} q =(b_0\cdot\nabla)^2\eta  &\text{in } \Omega,\\
	\Div_{\tilde\a^\kappa} v = 0 &\text{in  }\Omega,\\
	q=0 & \text{on  }\Gamma,\\
	(\eta,v)\mid_{t=0} =(\text{Id}, v_0).
	\end{cases}
	\end{equation}	
We assume $T>0$ and that $\tilde\eta, b_0\cdot\nabla\tilde\eta$, $\partial_t\tilde\eta$, $\psi$, $b_0\cdot\nabla\psi$ $\in L^{\infty}(0,T;H^4(\Omega))$, $\tilde\eta\mid_{t=0}=Id$ and that
\begin{align}
\abs{\tilde J^{\kappa}(t)-1}\leq \dfrac{1}{8} \text{ and } \abs{\tilde\a_{ij}^{\kappa}(t)-\delta_{ij}}\leq \dfrac{1}{8} \text{ in }\Omega.
\end{align}

 It is simple to see \eqref{lvapproximate} as two coupled linear
problems: a transport problem for $\eta$ with $v$ given, and a linear forced Euler problem for $(v,q)$ with $\eta$ given.
This perspective works well for the incompressible Euler equations \cite{CS07,DS_10}, however, in the presence of the magnetic field the regularity demand of the term $(b_0\cdot\nabla)^2\eta$ is one order regularity higher than what can be recovered from the transport equation for $\eta$, given the target regularity
for $v$. Our way of getting around this difficulty is to recover a solution to \eqref{lvapproximate} as a limit as $\varepsilon\rightarrow0$ of the following regularized linear $\varepsilon$-$\kappa$-approximate problem:
\begin{equation}\label{epsapproximate}
	\begin{cases}
	\partial_t\eta-\varepsilon (b_0\cdot\nabla)^2\eta =v+\psi &\text{in } \Omega,\\
	\partial_tv  +\nabla_{\tilde\a^\kappa} q =(b_0\cdot\nabla)^2\eta  &\text{in } \Omega,\\
	\Div_{\tilde\a^\kappa} v = 0 &\text{in  }\Omega,\\
	q=0 & \text{on  }\Gamma,\\
	(\eta,v)\mid_{t=0} =(\text{Id}, v_0).
	\end{cases}
	\end{equation}
Here $0<\ep<1$ is the artificial viscosity coefficient.	By adding this artificial viscosity term, the regularity of $(b_0\cdot\nabla)^2\eta$ obtaining from the first
equation of \eqref{epsapproximate} is just sufficient for solving the linear forced Euler problem for $(v,q)$. It is important that since $b_0\cdot N=0$ on $\Gamma$, we do not need to impose boundary conditions for $\eta$  and thus there is no boundary layer appearing  as $\varepsilon\rightarrow0$.

We first construct solutions to the linear $\varepsilon$-$\kappa$-approximate problem \eqref{epsapproximate} by using a fixed point argument which is based on the solvability of three linear problems, i.e., \eqref{eqforeta}, \eqref{eqforv} and \eqref{eqforq}. Then, we derive an $\varepsilon$-independent estimates of the solutions, which allows us to pass to the limit as $\varepsilon \rightarrow 0$ to produce the solution to  the linear $\kappa$-approximate problem \eqref{lvapproximate}. Finally, the existence of a unique solution to \eqref{lvapproximate} is recorded in Theorem \ref{linearthm}.

We write the dependence of constants and polynomials on $\kappa$ and $\varepsilon$ as $C_\kappa, C_\varepsilon, P_\kappa, P_\varepsilon$, etc. 
\subsubsection{Three linear problems}
The first linear problem is the following linear degenerate parabolic problem of $\eta$ with given $\mathfrak{f}^1$:
\begin{equation}
\label{eqforeta}
\begin{cases}
\partial_t\eta-\epsilon(b_0\cdot\nabla)^2\eta=\mathfrak{f}^1 \,\, &\text{in } \Omega,\\
\eta|_{t=0}=\eta_0.
\end{cases}
\end{equation}
\begin{proposition}\label{f1}
Given $\mathfrak{f}^1 \in L^{2}(0,T; H^4(\Omega))$ and suppose that $\eta_0,b_0\cdot\nabla\eta_0 \in H^4(\Omega)$. Then the problem \eqref{eqforeta}
admits a unique solution $\eta$ that achieves
the initial data $\eta_0$ and satisfies
\begin{equation}\label{b1}
\norm{  \eta(t)}_4+ \varepsilon\norm{ b_0\cdot\nabla\eta(t) }_4^2 + \varepsilon^2\int_0^t\norm{ ( b_0\cdot\nabla)^2\eta }_4^2d\tau
\ls  \norm{  \eta_0}_4+ \varepsilon\norm{ b_0\cdot\nabla\eta_0 }_4^2  +\int_0^t\norm{\mathfrak{f}^1}_4^2d\tau.
\end{equation}
\end{proposition}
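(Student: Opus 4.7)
The plan is to produce $\eta$ through a Faedo--Galerkin scheme and to close the bound \eqref{b1} as an $\epsilon$-independent $H^4$ energy estimate that exploits the symmetric structure of $(b_0\cdot\nabla)^2$ on $\Omega$. The crucial structural observation is that, because $b_0$ satisfies \eqref{bcond}, the operator $-(b_0\cdot\nabla)^2$ is symmetric and nonnegative on $L^2(\Omega)$ \emph{without} imposing any boundary condition on the test function: for $\phi\in H^2(\Omega)$,
\begin{equation*}
-\int_\Omega (b_0\cdot\nabla)^2\phi\,\phi
=\int_\Omega \abs{b_0\cdot\nabla\phi}^2+\int_\Omega(\Div b_0)\phi(b_0\cdot\nabla\phi)-\int_\Gamma(b_0\cdot N)\phi(b_0\cdot\nabla\phi)
=\int_\Omega\abs{b_0\cdot\nabla\phi}^2.
\end{equation*}
This is exactly why \eqref{eqforeta} requires no boundary data on $\eta$, and it makes every integration by parts below boundary-free.

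The a priori estimate would be derived in two tiers by differentiating \eqref{eqforeta}. For the first tier, apply $D^\alpha$ with $\abs{\alpha}\le 4$, pair with $D^\alpha\eta$ in $L^2(\Omega)$, and sum to obtain
\begin{equation*}
\hal\frac{d}{dt}\norm{\eta}_4^2 + \epsilon\sum_{\abs{\alpha}\le 4}\norm{b_0\cdot\nabla D^\alpha\eta}_0^2
\le (\mathfrak{f}^1,\eta)_{H^4} + \epsilon\sum_{\abs{\alpha}\le 4}\int_\Omega [D^\alpha,(b_0\cdot\nabla)^2]\eta\,D^\alpha\eta.
\end{equation*}
Writing $[D^\alpha,(b_0\cdot\nabla)^2]=[D^\alpha,b_0\cdot\nabla](b_0\cdot\nabla)+(b_0\cdot\nabla)[D^\alpha,b_0\cdot\nabla]$ and bounding $\norm{[D^\alpha,b_0\cdot\nabla]f}_0\ls\norm{b_0}_4\norm{f}_4$ by \eqref{co0}--\eqref{co1}, then integrating the second piece by parts once in the $b_0$ direction (again boundary-free), the commutator contribution is controlled by $P(\norm{b_0}_4)(\norm{\eta}_4^2+\norm{b_0\cdot\nabla\eta}_4\norm{\eta}_4)$; Cauchy--Schwarz absorbs the cross term into half the dissipation, yielding $\tfrac{d}{dt}\norm{\eta}_4^2+\epsilon\norm{b_0\cdot\nabla\eta}_4^2\ls\norm{\mathfrak{f}^1}_4^2+P(\norm{b_0}_4)\norm{\eta}_4^2$. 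For the second tier, pair $D^\alpha$\eqref{eqforeta} with $-\epsilon(b_0\cdot\nabla)^2 D^\alpha\eta$. One boundary-free integration by parts along $b_0$ converts the time-derivative pairing into $\tfrac{\epsilon}{2}\tfrac{d}{dt}\norm{b_0\cdot\nabla D^\alpha\eta}_0^2$, the viscous pairing produces $\epsilon^2\norm{(b_0\cdot\nabla)^2 D^\alpha\eta}_0^2$, and the forcing and commutator terms are absorbed by Cauchy--Schwarz into $\norm{\mathfrak{f}^1}_4^2+\tfrac{\epsilon^2}{2}\norm{(b_0\cdot\nabla)^2 D^\alpha\eta}_0^2+P(\norm{b_0}_4)(\epsilon\norm{b_0\cdot\nabla\eta}_4^2+\norm{\eta}_4^2)$. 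Adding the two tiers and applying Gronwall then gives \eqref{b1}.

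Existence would be obtained by the Faedo--Galerkin method: choose a smooth Hilbert basis $\{e_k\}$ of $L^2(\Omega)$ dense in $H^4(\Omega)$ (for instance tensor products of trigonometric modes on $\mathbb{T}^2$ with Legendre polynomials on $[0,1]$; no boundary condition need be encoded since \eqref{eqforeta} imposes none). Projecting \eqref{eqforeta} onto $\operatorname{span}\{e_1,\dots,e_N\}$ gives a linear ODE system that is globally solvable, and the two-tier estimate above applies uniformly in $N$, so standard weak-$\ast$ compactness produces a limit $\eta\in L^\infty(0,T;H^4)$ with $b_0\cdot\nabla\eta\in L^\infty(0,T;H^4)$ and $(b_0\cdot\nabla)^2\eta\in L^2(0,T;H^4)$ solving \eqref{eqforeta} in the distributional sense and attaining $\eta_0$ at $t=0$. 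Uniqueness follows from linearity plus the $L^2$ version of tier one applied to the difference of two solutions.

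The main analytic obstacle is the second tier. Since $\epsilon(b_0\cdot\nabla)^2$ is directionally degenerate and provides no $H^5$ control of $\eta$, the commutator $[D^\alpha,(b_0\cdot\nabla)^2]\eta$, whose symbol is formally of order $\abs{\alpha}+1$, cannot be bounded naively in $L^2$ by $\norm{\eta}_4$ alone. The splitting $[D^\alpha,(b_0\cdot\nabla)^2]=[D^\alpha,b_0\cdot\nabla](b_0\cdot\nabla)+(b_0\cdot\nabla)[D^\alpha,b_0\cdot\nabla]$, combined with the boundary-free integration by parts along $b_0$, converts the top-order piece into a quadratic form already controlled by $\epsilon^2\norm{(b_0\cdot\nabla)^2 D^\alpha\eta}_0^2$ and by tier-one quantities; this is where the weights $\epsilon$ and $\epsilon^2$ in \eqref{b1} originate, and it is precisely why the hypothesis $b_0\cdot\nabla\eta_0\in H^4$ is needed for the initial data.
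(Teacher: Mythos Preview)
Your proposal is correct and follows essentially the same two-tier energy strategy as the paper: both exploit that $b_0\cdot\nabla$ is skew-adjoint (thanks to \eqref{bcond}) to obtain boundary-free integrations by parts, and both combine a pairing against $D^\alpha\eta$ with a second pairing that produces $\tfrac{\varepsilon}{2}\tfrac{d}{dt}\norm{b_0\cdot\nabla\eta}_4^2$ and the $\varepsilon^2$-weighted dissipation. The only organizational difference is that the paper writes $D^4(b_0\cdot\nabla)^2\eta = b_0\cdot\nabla D^4(b_0\cdot\nabla\eta) + [D^4,b_0\cdot\nabla](b_0\cdot\nabla\eta)$ (treating $b_0\cdot\nabla\eta$ as the basic unknown, which keeps the commutator manifestly bounded by $\norm{b_0}_4\norm{b_0\cdot\nabla\eta}_4$) and takes $D^4\partial_t\eta$ as the tier-two test function, recovering $\varepsilon^2\norm{(b_0\cdot\nabla)^2\eta}_4^2$ afterward from the equation itself; your choice of $-\varepsilon(b_0\cdot\nabla)^2 D^\alpha\eta$ reaches the same place but requires the extra commutator splitting you describe.
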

\begin{proof}
The solvability of \eqref{eqforeta} is classical; one may first prove the existence of weak solutions by using the Galerkin method, and then improve the regularity of weak solutions by employing the standard arguments of using the difference quotients. We omit such a procedure and only focus on the derivation of
the estimate \eqref{b1}.

We apply $D^4$ to the equation \eqref{eqforeta} to find that
\begin{equation}
\label{eqforeta12}
\partial_t (D^4\eta)-\epsilon b_0\cdot\nabla  D^4(b_0\cdot\nabla \eta) =\epsilon\left[D^4,b_0\cdot\nabla\right](b_0\cdot\nabla\eta)+D^4\mathfrak{f}^1  .
\end{equation}
Taking the $L^2(\Omega)$ inner product of \eqref{eqforeta12} with $D^4\eta$, by the integration by parts, we obtain
\begin{equation}
\label{ee0}
\begin{split}
&\dfrac{1}{2}\dfrac{d}{dt}\int_{\Omega}\abs{D^4\eta}^2 +\epsilon\int_{\Omega}\abs{D^4(b_0\cdot\nabla\eta)}^2
\\&\quad=\int_{\Omega}\varepsilon D^4(b_0\cdot\nabla\eta_i)\left[D^4,b_0\cdot\nabla\right]  \eta_i+\epsilon \left[D^4,b_0\cdot\nabla\right](b_0\cdot\nabla\eta_i)D^4\eta_i +D^4 \mathfrak{f}^1\cdot D^4\eta
 \\&\quad\ls \epsilon\norm{b_0\cdot\nabla\eta}_4\norm{b_0}_4\norm{ \eta}_4+\left(\epsilon\norm{ b_0}_4\norm{b_0\cdot\nabla\eta}_4+\norm{\mathfrak{f}^1}_4 \right)\norm{ \eta}_4
  \\&\quad\ls  \left(\epsilon\norm{ b_0}_4\norm{b_0\cdot\nabla\eta}_4+\norm{\mathfrak{f}^1}_4 \right)\norm{ \eta}_4.
\end{split}
\end{equation}
While taking the $L^2(\Omega)$ inner product of \eqref{eqforeta12} with $D^4\dt\eta$ yields
\begin{equation}
\label{ee01}
\begin{split}
&\int_{\Omega}\abs{D^4\dt\eta}^2 +\dfrac{\epsilon}{2}\dfrac{d}{dt}\int_{\Omega}\abs{D^4(b_0\cdot\nabla\eta)}^2
\\&\quad=\int_{\Omega}\varepsilon D^4(b_0\cdot\nabla\eta_i)\left[D^4,b_0\cdot\nabla\right] \dt \eta_i+\epsilon \left[D^4,b_0\cdot\nabla\right](b_0\cdot\nabla\eta_i)D^4\dt\eta_i +D^4 \mathfrak{f}^1\cdot D^4\dt\eta
 \\&\quad\ls  \left(\epsilon\norm{ b_0}_4\norm{b_0\cdot\nabla\eta}_4+\norm{\mathfrak{f}^1}_4 \right)\norm{ \dt\eta}_4.
\end{split}
\end{equation}
These two imply, by using Cauchy's inequality and the equation \eqref{eqforeta},
\begin{equation}
\label{ee0121212}
  \dfrac{d}{dt}\left(\norm{  \eta}_4^2+ \varepsilon\norm{ b_0\cdot\nabla\eta }_4^2\right)+\varepsilon \norm{ b_0\cdot\nabla\eta }_4^2+\varepsilon^2\norm{ ( b_0\cdot\nabla)^2\eta }_4^2
 \ls  \norm{ b_0}_4^2\norm{  \eta}_4^2  +\norm{\mathfrak{f}^1}_4^2.
\end{equation}
An application of the Gronwall lemma of \eqref{ee0121212} yields the estimate \eqref{b1}.
\end{proof}

The second linear problem is the simple transport problem of $v$ with given $\mathfrak{f}^2$:
\begin{equation}
\label{eqforv}
\begin{cases}
	\partial_t v =\mathfrak{f}^2 &\text{in } \Omega,\\
v|_{t=0}=v_0.
\end{cases}
\end{equation}

\begin{proposition}\label{f2}
Given $\mathfrak{f}^2 \in L^{1}(0,T; H^4(\Omega))$ and suppose that $v_0 \in H^4(\Omega)$. Then the problem \eqref{eqforv}
admits a unique solution $v$ that achieves
the initial data $v_0$ and  satisfies
\begin{equation}
\label{b2}
\norm{v(t)}_4  \leq \norm{v_0}_4 + \int_0^t\norm{\mathfrak{f}^2 }_4 \,d\tau .
\end{equation}
\end{proposition}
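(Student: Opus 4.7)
The problem \eqref{eqforv} is not really a PDE but an ODE in the Banach space $H^4(\Omega)$: there are no spatial derivatives of $v$ on the left, and the spatial variable $x\in\Omega$ plays the role of a parameter. So the plan is simply to define
\begin{equation}
v(t,x) := v_0(x) + \int_0^t \mathfrak{f}^2(\tau,x)\,d\tau
\end{equation}
interpreted as a Bochner integral in $H^4(\Omega)$. Since by assumption $\mathfrak{f}^2 \in L^1(0,T;H^4(\Omega))$ and $v_0 \in H^4(\Omega)$, the right-hand side is well-defined and yields $v \in C([0,T];H^4(\Omega))$ with $v(0)=v_0$ and $\partial_t v = \mathfrak{f}^2$ in the sense of distributions (and a.e.\ in $t$).

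Uniqueness is immediate: if $v_1,v_2$ are two such solutions, then $w=v_1-v_2$ satisfies $\partial_t w = 0$ with $w|_{t=0}=0$, so $w\equiv 0$.

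For the estimate \eqref{b2}, I would just apply Minkowski's inequality (triangle inequality for the Bochner integral) to the explicit formula:
\begin{equation}
\norm{v(t)}_4 \;\leq\; \norm{v_0}_4 + \left\| \int_0^t \mathfrak{f}^2(\tau)\,d\tau \right\|_4 \;\leq\; \norm{v_0}_4 + \int_0^t \norm{\mathfrak{f}^2(\tau)}_4\,d\tau.
\end{equation}
Alternatively, one can differentiate $\norm{v(t)}_4^2$ in $t$, use $\partial_t v=\mathfrak{f}^2$, apply Cauchy--Schwarz in the $H^4$ inner product, and integrate; both routes are routine. There is no serious obstacle here — this proposition is essentially a bookkeeping statement whose role is to be combined with Propositions concerning $\eta$ and $q$ inside the fixed-point argument for \eqref{epsapproximate}.
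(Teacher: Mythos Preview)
Your proof is correct and matches the paper's approach exactly: the paper's entire proof is the single sentence ``The proof is trivial by integrating \eqref{eqforv} in time directly,'' which is precisely what you have spelled out with the Bochner integral and Minkowski's inequality.
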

\begin{proof}
The proof is trivial by integrating \eqref{eqforv} in time directly.
\end{proof}

The last linear problem is the most substantial elliptic problem of $q$ with given $\mathfrak{f}^3$:
\begin{equation}
\label{eqforq}
\begin{cases}
-\tilde\a^\kappa_{ij}\pa_j( \tilde\a^\kappa_{i\ell}\pa_\ell q)= \mathfrak{f}^3 \,\, &\text{in } \Omega,\\
q=0 \,\, &\text{on } \Gamma.
\end{cases}
\end{equation}

\begin{proposition}\label{f3}
Given $\mathfrak{f}^3 \in H^3(\Omega)$. Then the problem \eqref{eqforq}
admits a unique solution $q$ that satisfies
\begin{equation}\label{b3}
\norm{\nabla_{\tilde\a^\kappa} q}_{4}
 \leq  \kappa^{-4} P(\norm{\tilde\eta }_{4})\norm{\mathfrak{f}^3}_{3}.
\end{equation}
\end{proposition}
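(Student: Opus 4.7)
The plan is to carry out a standard second-order elliptic regularity argument for $q$, with the nonstandard ingredient being a careful tracking of how the coefficient $\tilde\a^\kappa$'s $H^k$ norms blow up in $\kappa$, which is what drives the explicit $\kappa^{-4}$ factor.

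First I would symmetrize. Using the Piola identity \eqref{polia}, $\tilde\a^\kappa_{ij}\partial_j f=\frac{1}{\tilde J^\kappa}\partial_j(\tilde J^\kappa\tilde\a^\kappa_{ij}f)$, so the PDE in \eqref{eqforq} is equivalent to the divergence-form problem $-\partial_j(\tilde E_{j\ell}\partial_\ell q)=\tilde J^\kappa\mathfrak{f}^3$ with $\tilde E=\tilde J^\kappa(\tilde\a^\kappa)^T\tilde\a^\kappa$, which by the a priori assumption on $\tilde\a^\kappa$ is symmetric and uniformly positive. Existence and uniqueness of a weak solution $q\in H^1_0(\Omega)$ then follow from Lax--Milgram applied to $a(q,\phi)=\int_\Omega \tilde E_{j\ell}\partial_\ell q\,\partial_j\phi$, with $\norm{q}_1\lesssim\norm{\mathfrak{f}^3}_0$.

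Next I would boost regularity by successive tangential differentiation. Because $q$ vanishes identically on $\Gamma$, so does $\bar\partial^k q$, so differentiating the divergence-form equation by $\bar\partial^k$ and performing an $H^2$-energy estimate on $\bar\partial^k q$ produces, inductively, $\norm{\bar\partial^k q}_2\lesssim P(\norm{\tilde E}_{k+1})(\norm{\mathfrak{f}^3}_k+\norm{q}_{k+1})$. Normal derivatives are then recovered from the equation in the manner of \eqref{np}, solving algebraically for $\partial_3^2 q$ using the uniform positivity of $\tilde E_{33}$; this is exactly the procedure already used in the proof of Proposition~\ref{pressure}. Iterating to $k=3$ gives $\norm{q}_5\lesssim P(\norm{\tilde E}_4)\norm{\mathfrak{f}^3}_3$, after which $\norm{\nabla_{\tilde\a^\kappa}q}_4$ is controlled by $P(\norm{\tilde\a^\kappa}_4)\norm{q}_5$ through the algebra property of $H^4$ in three dimensions.

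The remaining task, and the main obstacle, is to bound $\norm{\tilde E}_4$ (equivalently $\norm{\tilde\a^\kappa}_4$) in terms of $\kappa$ and $\norm{\tilde\eta}_4$. For this I would use the elliptic problem \eqref{etadef} defining $\tilde\eta^\kappa$: the interior forcing is only $-\Delta\tilde\eta\in H^2$, but the boundary data $\Lambda_\kappa^2\tilde\eta$ can be pushed to arbitrarily high regularity at the cost of powers of $\kappa^{-1}$ via the mollifier gain $|\Lambda_\kappa^2\tilde\eta|_{7/2+s}\lesssim\kappa^{-s}|\tilde\eta|_{7/2}$. Using tangential differentiation of \eqref{etadef} combined with normal recovery through $\Delta$, one arrives at an estimate of the shape $\norm{\tilde\a^\kappa}_4\lesssim\kappa^{-a}P(\norm{\tilde\eta}_4)$ for some integer $a$. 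Chaining this through the elliptic estimate in Step~3 (each coefficient loss is raised to a polynomial power coming from $P(\norm{\tilde E}_4)$) yields the stated $\kappa^{-4}$ factor in \eqref{b3}.

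The delicate point in implementing this plan is precisely matching the exponent: one has to count how many powers of $\norm{\tilde E}_4$ appear in the elliptic regularity argument and how many powers of $\kappa^{-1}$ each such factor costs, so that the product is exactly $\kappa^{-4}$. This is book-keeping rather than a conceptual difficulty, but it is the only place where $\kappa$-dependence genuinely enters the proof — all other estimates (Lax--Milgram, tangential differentiation, normal recovery) are completely standard and give $\kappa$-independent constants.
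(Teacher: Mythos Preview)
Your argument has a genuine gap at the step you flag as ``the main obstacle'': you cannot bound $\norm{\tilde\a^\kappa}_4$ (equivalently $\norm{\tilde E}_4$) by $\kappa^{-a}P(\norm{\tilde\eta}_4)$ for any finite $a$. Recall that $\tilde\a^\kappa=(\nabla\tilde\eta^\kappa)^{-T}$, so $\norm{\tilde\a^\kappa}_4$ requires control of $\norm{\tilde\eta^\kappa}_5$. But $\tilde\eta^\kappa$ solves the elliptic problem \eqref{etadef} with \emph{interior} source $-\Delta\tilde\eta$, which lies only in $H^2$ when $\tilde\eta\in H^4$. Elliptic regularity therefore gives at best $\norm{\tilde\eta^\kappa}_4\ls\norm{\Delta\tilde\eta}_2+|\Lambda_\kappa^2\tilde\eta|_{7/2}$; no amount of boundary smoothing by $\Lambda_\kappa$ can push $\tilde\eta^\kappa$ into $H^5$, because the interior forcing is the bottleneck and does not see $\kappa$ at all. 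Consequently your $H^5$ elliptic estimate $\norm{q}_5\ls P(\norm{\tilde E}_4)\norm{\mathfrak f^3}_3$ has an uncontrolled right-hand side.

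The paper circumvents exactly this obstruction by a two-step change of variables. First it pulls $q$ forward through $\tilde\eta^\kappa$ to obtain $p=q\circ(\tilde\eta^\kappa)^{-1}$, which satisfies the constant-coefficient problem $-\Delta p=\mathfrak h$ on the moving domain $\tilde\eta^\kappa(\Omega)$; this costs nothing because the coefficients disappear. Then it pulls $p$ back to $\Omega$ not via $\tilde\eta^\kappa$ but via the \emph{harmonic extension} $\tilde\Phi$ of the boundary data $\Lambda_\kappa^2\tilde\eta|_\Gamma$. The point is that $\tilde\Phi$ has no interior source term, so its regularity is governed solely by the boundary trace, and the mollifier gain \eqref{loss} gives $\norm{\tilde\Phi}_5\ls\kappa^{-4}|\tilde\eta|_{1/2}$. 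One then runs the $H^5$ elliptic estimate on $Q=p\circ\tilde\Phi$ with coefficients $\tilde{\mathcal B}=(\nabla\tilde\Phi)^{-T}$, for which $\norm{\tilde{\mathcal B}}_4$ \emph{is} controlled by $\kappa^{-4}P(\norm{\tilde\eta}_4)$, and finally converts back via the chain rule. The introduction of the auxiliary harmonic diffeomorphism $\tilde\Phi$ is the missing idea in your plan.
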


\begin{proof}
The solvability of \eqref{eqforq} is standard; the point lies in the estimate \eqref{b3} is that it only involves $\norm{\tilde\eta }_{4}$ rather than $\norm{\tilde\eta }_{5}$  for fixed $\kappa>0$.

We will prove \eqref{b3} by following an argument in Theorem 7.2 of \cite{DS_10}. For this, we define $
p=q\circ (\tilde\eta^{\kappa})^{-1}, \mathfrak{h}=\mathfrak{f}^3 \circ (\tilde\eta^{\kappa})^{-1}.
$
Then we have
\begin{equation}\label{jalf}
\begin{cases}
-\Delta p=\mathfrak{h} &\text{in }  \tilde\eta^{\kappa}(\Omega,t),\\
p=0 &\text{on }  \tilde\eta^{\kappa}(\Gamma,t).
\end{cases}
\end{equation}
Let $\tilde\Phi$ be the harmonic extension of $\eta^\kappa\mid_\Gamma$ onto $\Omega$, which inherits the smoothness of $\eta^\kappa$ on $\Gamma$. Indeed, for $s\ge r> 1/2$, by the estimates \eqref{loss}, it holds that
\begin{equation}\label{phies}
\|\tilde\Phi\|_{s}\ls   \abs{\tilde\eta^\kappa}_{s-1/2} \ls  \kappa^{-{(s-r)}}\abs{\tilde\eta^\kappa}_{r-1/2}\ls  \kappa^{-{(s-r)}} \norm{\tilde\eta }_{r}.
\end{equation}
We then define $Q=p\circ \tilde\Phi$ and denote $\tilde{\mathcal{B}}=(\nabla\tilde\Phi)^{-T}$. Then we have
\begin{equation}\label{jalfs}
\begin{cases}
-\tilde{\mathcal{B}}_{ij}\pa_j( \tilde{\mathcal{B}}_{i\ell}\pa_\ell Q)=\mathfrak{h}\circ \tilde\Phi &\text{in } \Omega ,\\
Q=0 &\text{on }  \Gamma.
\end{cases}
\end{equation}
The standard elliptic estimates shows that, by \eqref{phies},
\begin{equation}
\norm{Q}_{5}\leq P(\|\tilde\Phi\|_{5})\|\mathfrak{h}\circ \tilde\Phi\|_{3} \leq P(\|\tilde\Phi\|_{5})\norm{\mathfrak{h}}_{3} \leq  \kappa^{-4} P(\norm{\tilde\eta }_{1})\norm{\mathfrak{h}}_{3}.
\end{equation}
Since $p=Q\circ \tilde\Phi^{-1}  $,
\begin{equation}
\norm{\nabla p \circ \tilde\eta^\kappa}_{4}\ls \norm{\nabla p }_4\norm{\tilde\eta^\kappa}_{4} \leq P(\|\tilde\Phi\|_{5})\norm{\nabla Q }_4\norm{\tilde\eta^\kappa}_{4}
 \leq  \kappa^{-4} P(\norm{\tilde\eta }_{4})\norm{\mathfrak{h}}_{3}.
\end{equation}
This implies \eqref{b3} by the chain rule.
\end{proof}

\subsubsection{Solvability of \eqref{epsapproximate}}\label{solv es}
We employ a fixed point argument in order to produce a solution to the linear $\varepsilon$-$\kappa$-approximate problem \eqref{epsapproximate}.

For $0 < T <1$ and $M>0$, we define the metric space in which to work:
\begin{equation}\label{metric_space_def}
\begin{split}
	\mathfrak{X}(M,T ) = &\left\{ (w, \pi, \zeta)\;\vert\; w, \zeta, b_0\cdot\nabla\zeta \in C([0,T]; H^4(\Omega)) \text{ satisfy that } (w, \zeta)\mid_{t=0} = (v_0, \text{Id}) \right.
  \\  &\qquad\qquad\quad\text{and }\left.\norm{  (w, \zeta, b_0\cdot\nabla\zeta  )}_{L^\infty_TH^4}  +  \norm{\left( ( b_0\cdot\nabla)^2\zeta , \nabla_{\tilde\a^\kappa}\pi \right) }_{L^{2}_TH^4} \le M .\right\}
  \end{split}
\end{equation}
Note that $\mathfrak{X}(M,T )$ is a Banach space. We then define a mapping $\mathcal{M}:\mathfrak{X}(M,T )\rightarrow\mathfrak{X}(M,T )$ as $\mathcal{M}(w, \pi, \zeta)=(v, q, \eta)$, where $\eta, v$ and $ q$ are determined as follows. Given $(w, \pi, \zeta)\in\mathfrak{X}(M,T )$, we first define $\eta$ as the solution to  \eqref{eqforeta} with $\mathfrak{f}^1=w+ \psi$ and the initial data $\eta_0=Id$, and $v$ as the solution to \eqref{eqforv} with $\mathfrak{f}^2=- \nabla_{\tilde\a^\kappa}\pi+(b_0\cdot \nabla)^2 \zeta$ and the initial data $v_0$. Moreover, the estimates \eqref{b1} implies
\begin{equation}\label{b1000}
\begin{split}
\norm{  (\eta, b_0\cdot\nabla\eta)}_{L^\infty_TH^4}^2 +  \norm{ ( b_0\cdot\nabla)^2\eta}_{L^2_TH^4}^2
 &\le C_\varepsilon\left( \norm{ b_0 }_4  + \norm{w }_{L^2_TH^4}^2+\norm{\psi }_{L^2_TH^4}^2\right)
   \\& \le C_\varepsilon\left( \norm{ b_0 }_4  +\norm{\psi }_{L^2_TH^4}^2+T M^2\right) ,
\end{split}
\end{equation}
and the estimates \eqref{b2} implies
\begin{equation}
\label{b2000}
\norm{v}_{L^\infty_TH^4}^2  \ls \norm{v_0}_4^2 + \left( \norm{\nabla_{\tilde\a^\kappa}\pi }_{L^1_TH^4}+\norm{(b_0\cdot \nabla)^2 \zeta   }_{L^1_TH^4}\right)^2
\ls\norm{v_0}_4^2   +T  M^2  .
\end{equation}
We now define $q$ as the solution to \eqref{eqforq} with $\mathfrak{f}^3=\partial_t\tilde \a_{i\ell}\partial_{\ell} v_i+ \tilde\a^\kappa_{i\ell}\partial_{\ell}\big((b_0\cdot\nabla)^2\eta_i\big)$, where $v$ and $\eta$ are the functions constructed in the above. Moreover,  the estimates \eqref{b3} implies, by the bounds \eqref{b1000} and \eqref{b2000},
 \begin{equation}\label{b3000}
 \begin{split}
 \norm{\nabla_{\tilde\a^\kappa} q}_{L^2_TH^4}^2
 &\leq  \int_0^t\kappa^{-8} P(\norm{\tilde\eta }_{4})\norm{\partial_t\tilde \a_{i\ell}^\kappa\partial_{\ell} v_i+ \tilde\a^\kappa_{i\ell}\partial_{\ell}\big((b_0\cdot\nabla)^2\eta_i\big)}_{3}^2\,d\tau
 \\&\leq\int_0^t\kappa^{-8} P(\norm{\tilde\eta }_{4},\norm{\dt\tilde\eta }_{4})\left(\norm{  v }_4^2+ \norm{(b_0\cdot\nabla)^2\eta }_{4}^2\right)d\tau
  \\&\le P_{\varepsilon,\kappa}\left(\norm{ b_0 }_4,\norm{v_0}_4,   \norm{(\tilde\eta,\dt\tilde\eta,\psi) }_{L^\infty_TH^4}^2 \right)+ C_\varepsilon T M^2.
 \end{split}
\end{equation}
Hence, if $M$ is taken to be sufficiently large with respect to $b_0,v_0,\tilde\eta,\psi$ and $\varepsilon,\kappa$ and then $0<T<1$ is taken to be sufficiently small (depending on $M$ and $\varepsilon$), then $(v, q, \eta)\in \mathfrak{X}(M,T )$. This implies that the mapping $\mathcal{M}: \mathfrak{X}(M,T )\rightarrow \mathfrak{X}(M,T )$ is well-defined.

We shall now show that the mapping $\mathcal{M}$ has a fixed
 point in the space $\mathfrak{X}(M,T )$ by proving the contraction. Let $(w^i, \pi^i, \zeta^i)\in \mathfrak{X}(M,T )$ and $(v^i, q^i, \eta^i)=\mathcal{M}(w^i, \pi^i, \zeta^i)\in \mathfrak{X}(M,T )$, $i=1,2$. Then we find that $\bar\eta=\eta^1-\eta^2$ solves  \eqref{eqforeta} with $\mathfrak{f}^1=w^1-w^2$ and the initial data $\bar\eta_0=0$, $\bar v=v^1-v^2$ solves \eqref{eqforv} with $\mathfrak{f}^2=- \nabla_{\tilde\a^\kappa}(\pi^1-\pi^2)+(b_0\cdot \nabla)^2 (\zeta^1-\zeta^2)$ and the initial data $\bar v_0=0$, and $\bar q=q^1-q^2$ solves \eqref{eqforq} with $\mathfrak{f}^3=\partial_t\tilde \a_{i\ell}\partial_{\ell}  \bar v_i+ \tilde\a^\kappa_{i\ell}\partial_{\ell}\big((b_0\cdot\nabla)^2\bar\eta_i\big)$.
Moreover, we have the following estimates:
\begin{equation}\label{b10'}
\norm{  (\bar\eta, b_0\cdot\nabla\bar\eta)}_{L^\infty_TH^4}^2+  \norm{ ( b_0\cdot\nabla)^2\bar\eta}_{L^2_TH^4}^2
\le C_\varepsilon\norm{w^1-w^2}_{L^2_TH^4}^2
  \le C_\varepsilon T  \norm{w^1-w^2}_{L^\infty_TH^4}^2,
\end{equation}
\begin{equation}
\label{b20'}
\norm{\bar v}_{L^\infty_TH^4}^2 \le T  \norm{\left(\nabla_{\tilde\a^\kappa}(\pi^1-\pi^2),(b_0\cdot \nabla)^2 (\zeta^1-\zeta^2)\right)}_{L^2_TH^4}^2
\end{equation}
and, by \eqref{b10'} and \eqref{b20'},
 \begin{equation}\label{b30'}
 \begin{split}
 \norm{\nabla_{\tilde\a^\kappa}\bar q}_{L^2_TH^4}^2
 &\leq\int_0^t\kappa^{-8} P(\norm{\tilde\eta }_{4},\norm{\dt\tilde\eta }_{4})\left(\norm{ \bar v}_4^2+ \norm{(b_0\cdot\nabla)^2\bar\eta}_{4}^2\right)d\tau
  \\&\le P_{\varepsilon,\kappa}\left(\norm{ b_0 }_4,\norm{v_0}_4,   \norm{(\tilde\eta,\dt\tilde\eta ) }_{L^\infty_TH^4}^2 \right)\\ & \quad\times T\left(\norm{w^1-w^2}_{L^\infty_TH^4}^2+\norm{\left(\nabla_{\tilde\a^\kappa}(\pi^1-\pi^2),(b_0\cdot \nabla)^2 (\zeta^1-\zeta^2)\right)}_{L^2_TH^4}^2 \right).
 \end{split}
\end{equation}
We then see that if we further restrict $T$ smaller in terms of $b_0,v_0,\tilde\eta$ and $\varepsilon,\kappa$, we then have that the mapping $\mathcal{M}: \mathfrak{X}(M,T )\rightarrow \mathfrak{X}(M,T )$ is  a contraction and therefore admits a unique fixed point $\mathcal{M}(v, q, \eta)=(v, q, \eta)$.

Finally, we must verify that the unique fixed point $(v, q, \eta)$ is a solution to \eqref{epsapproximate}, and the remaining thing is to recover the third equation of \eqref{epsapproximate}. For this, taking $\diverge_{\tilde\a^\kappa}$ to the second equation of \eqref{epsapproximate}, and then using the elliptic problem of $q$, we find that $\dt\left(\diverge_{\tilde\a^\kappa} v\right)=0$. Then the conclusion follows by requiring the initial condition $\diverge v_0=0$.

\subsubsection{$\varepsilon$-independent estimates of \eqref{epsapproximate}}\label{epes}

For each $\varepsilon>0$, by Section \ref{solv es}, there exists a time $T_\varepsilon=T_\varepsilon(b_0,v_0,\tilde\eta,\psi,\kappa)>0$ such that there is a unique solution $(v, q, \eta)=(v(\varepsilon), q(\varepsilon), \eta(\varepsilon))$ to \eqref{epsapproximate} on $[0,T_\varepsilon]$. For notational simplifications, we will not explicitly write the dependence of the solution on $\varepsilon$. The purpose of this section is to derive the $\varepsilon$-independent estimates of the solutions to \eqref{epsapproximate}, which enables us to consider the limit of this sequence of solutions as $\varepsilon\rightarrow 0$.

We define the high order energy functional
\begin{equation}
\mathfrak{E^{\epsilon}}(t):=\norm{v(t)}_4^2+\norm{\eta(t)}_4^2+\norm{b_0\cdot\nabla\eta(t)}_4^2+\epsilon\int_0^t\norm{(b_0\cdot\nabla)^2\eta }_4^2d\tau.
 \end{equation}
We claim that there exists a time $T_\kappa=T_\kappa(b_0,v_0,\tilde\eta,\psi)>0$ such that
\begin{equation}  \label{epsclaim}
\sup_{t\in[0,T_\kappa]} \mathfrak{E}^{\epsilon}(t) \leq 2 M_0 .
\end{equation}

\begin{proof}[Proof of the claim \eqref{epsclaim}]
We divide our proof into several steps.

{\it Step 1: estimates of $\eta$.} Employing the estimates \eqref{b1} with $\mathfrak{f}^1=v+ \psi$ and $\eta_0=Id$, we have
\begin{equation}\label{etap}
\begin{split}
\norm{  \eta(t)}_4^2  + \varepsilon^2\int_0^t\norm{ ( b_0\cdot\nabla)^2\eta }_4^2\,d\tau
&\ls   P(\norm{b_0}_4) +\int_0^t(\norm{v }_4^2+\norm{ \psi }_4^2)\,d\tau
\\&\leq M_0+TP\left(\sup_{t\in[0,T]}\mathfrak{E^{\epsilon}}(t)\right)+T   \norm{ \psi}_{L^\infty_TH^4}^2 .
\end{split}
\end{equation}

{\it Step 2: estimates of $q$.} Repeating the arguments in Proposition \ref{pressure}, we may deduce that
\begin{equation}\label{qqes}
\norm{q}_4^2\leq P\left(\norm{(\eta,b_0\cdot\nabla\eta,v,\tilde \eta ,b_0\cdot\nabla\tilde\eta ,\partial_t\tilde\eta) }_4^2\right).
\end{equation}

{\it Step 3: tangential energy estimates.} We use again Alinac's good unknowns
\begin{equation}
\tilde{\mathcal V}:=\bp^2\Delta_\ast v-\bp^2\Delta_\ast\tilde\eta^{\kappa}\cdot\nabla_{\tilde\a^\kappa} v, \,\,\tilde{\mathcal Q}:=\bp^2\Delta_\ast q-\bp^2\Delta_\ast\tilde\eta^{\kappa}\cdot\nabla_{\tilde\a^\kappa} q.
\end{equation}
As in Section \ref{tan}, we deduce
\begin{equation}\label{tttt1}
 \dfrac{1}{2}\dfrac{d}{dt}\int_{\Omega}\abs{\tilde{\mathcal{V}}}^2   +\int_{\Omega} \bp^2\Delta_\ast (b_0\cdot\nabla\eta_i)   b_0\cdot\nabla \tilde{\mathcal{V}}_i
 =\int_{\Gamma} \pa_3 q \bp^2\Delta_\ast(\Lambda_\kappa^2\tilde\eta_j)\tilde\a^\kappa_{j3} \tilde\a^\kappa_{i\ell}N_\ell\V_i-\tilde {\mathcal{R}}+\int_{\Omega} \tilde F\cdot \tilde{\mathcal{V}}.
\end{equation}
Here $\tilde F$ represents the $F$ in \eqref{eqValpha}  and $\tilde {\mathcal{R}}$ represents the $\mathcal{R}$ in \eqref{gg2}, with $\eta$ replacing with $\tilde\eta$.
We make use of the estimates \eqref{loss} to avoid the loss of derivatives on the boundary $\Gamma$ for fixed $\kappa>0$, by the definition of $\V$ and \eqref{qqes}, to deduce
\begin{equation}\label{eses1}
\begin{split}
&\int_{\Gamma} \pa_3 q \bp^2\Delta_\ast(\Lambda_\kappa^2\tilde\eta_j)\tilde\a^\kappa_{j3} \tilde\a^\kappa_{i\ell}N_\ell\V_i
\ls \abs{\pa_3 q \tilde\a^\kappa_{j3} \tilde\a^\kappa_{i\ell}N_\ell}_{W^{1,\infty}} \abs{\bp^2\Delta_\ast(\Lambda_\kappa^2\tilde\eta_j)}_{1/2}\abs{\V_i}_{-1/2} \\
&\quad\ls \abs{Dq\tilde\a^\kappa\tilde\a^\kappa}_{W^{1,\infty}}\dfrac{1}{\kappa}\abs{\bp^3\tilde\eta}_{1/2}\left(\abs{\bp^3v}_{1/2}
+\abs{\bp^3\tilde\eta}_{1/2}\abs{\tilde\a^\kappa Dv}_{W^{1,\infty}}\right)
\\&\quad\le \dfrac{1}{\kappa} P\left(\norm{(\eta,b_0\cdot\nabla\eta,v,\tilde \eta,b_0\cdot\nabla\tilde\eta,\partial_t\tilde\eta) }_4^2\right) .
\end{split}
\end{equation}
While by using the first equation of \eqref{epsapproximate}, we have
\begin{align}
\label{pp}
&\int_{\Omega}  \bp^2\Delta_\ast (b_0\cdot\nabla\eta_i)    b_0\cdot\nabla \V_i \nonumber
\\\nonumber&\quad=\hal \dfrac{d}{dt}\int_{\Omega}\abs{\bp^2\Delta_\ast (b_0\cdot\nabla\eta)}^2
  -\epsilon\int_{\Omega}  \bp^2\Delta_\ast (b_0\cdot\nabla\eta_i) \bp^2\Delta_\ast (b_0\cdot\nabla)^3\eta_i
\\\nonumber&\quad\quad-\int_{\Omega}  \bp^2\Delta_\ast (b_0\cdot\nabla\eta_i)   \left(\bp^2\Delta_\ast (b_0\cdot\nabla  \psi_i)-\left[\bp^2\Delta_\ast ,b_0\cdot\nabla\right] v_i-b_0\cdot\nabla(\bp^2\Delta_\ast\tilde\eta^{\kappa}\cdot\nabla_{\tilde\a^\kappa} v_i)\right)
\\\nonumber &\quad\ge \hal \dfrac{d}{dt}\int_{\Omega}\abs{\bp^2\Delta_\ast (b_0\cdot\nabla\eta)}^2 + \epsilon\int_{\Omega}\abs{\bp^2\Delta_\ast ((b_0\cdot\nabla)^2\eta)}^2
\\\nonumber&\quad\quad +  \epsilon\int_{\Omega}\bp^2\Delta_\ast (b_0\cdot\nabla\eta_i)\left[\bp^2\Delta_\ast, b_0\cdot\nabla\right](b_0\cdot\nabla)^2\eta_i-\left[b_0\cdot\nabla, \bp^2\Delta_\ast \right](b_0\cdot\nabla\eta_i)\bp^2\Delta_\ast\left((b_0\cdot\nabla)^2\eta_i\right)
\\\nonumber&\quad \quad -  P(\norm{(b_0 ,b_0\cdot\nabla\eta, v,\tilde\eta, b_0\cdot\nabla\tilde\eta,b_0\cdot\nabla\psi)}_4)
\\ \nonumber&\quad\ge \hal \dfrac{d}{dt}\int_{\Omega}\abs{\bp^2\Delta_\ast (b_0\cdot\nabla\eta)}^2 + \epsilon\int_{\Omega}\abs{\bp^2\Delta_\ast((b_0\cdot\nabla)^2\eta)}^2   \\ &\qquad -  P(\norm{(b_0 ,\eta,b_0\cdot\nabla\eta, v,\tilde\eta, b_0\cdot\nabla\tilde\eta,b_0\cdot\nabla\psi)}_4)\left(1+\varepsilon \norm{ ( b_0\cdot\nabla)^2\eta }_4\right).
\end{align}
By estimating the $\tilde F$ and $\tilde {\mathcal{R}}$ terms as usual, integrating \eqref{tttt1} in time directly and then using the Cauchy-Schwarz inequality, by \eqref{eses1} and \eqref{pp}, we  obtain
\begin{equation}\label{vves}
\begin{split}
&\norm{\V(t)}_0^2+\norm{\bp^4  (b_0\cdot\nabla \eta)(t)}_0^2+\epsilon\int_0^t \norm{\bp^4((b_0\cdot\nabla)^2\eta)}_0^2\,d\tau
\\&\quad\leq M_0+\sqrt{T}P_\kappa\left(\sup_{t\in[0,T]} \mathfrak{E}^{\epsilon}(t)\right)  P_\kappa\left(\norm{(  \tilde\eta, b_0\cdot\nabla\tilde\eta,\partial_t\tilde\eta,b_0\cdot\nabla\psi)}_{L^\infty_TH^4}\right).
\end{split}
\end{equation}
Furthermore, by using the fundamental theorem of calculus, we have
\begin{equation}
\label{te}
\begin{split}
\norm{\bp^4  v(t)}_0^2\ls & \norm{\V(t)}_0^2+\norm{\bp^2\Delta_\ast \tilde\eta^{\kappa}(t)}_0^2\norm{\tilde\a^\kappa_{j\ell}\partial_{\ell} v(t)}_{L^{\infty}}^2
\\\ls &\norm{\V(t)}_0^2+t\int_0^t\norm{\bp^2\Delta_\ast \partial_t\tilde\eta^\kappa}_0^2\,d\tau\norm{\tilde\a^\kappa_{j\ell}\partial_{\ell} v(t)}_{L^{\infty}}^2
\\\leq &M_0+\sqrt{T}P_\kappa\left(\sup_{t\in[0,T]} \mathfrak{E}^{\epsilon}(t)\right)  P_\kappa\left(\norm{(  \tilde\eta, b_0\cdot\nabla\tilde\eta,\partial_t\tilde\eta,b_0\cdot\nabla\psi)}_{L^\infty_TH^4}\right).
\end{split}
\end{equation}

{\it Step 4: curl and divergence estimates.}
Applying $\curl_{\tilde\a^\kappa}$ to the second equation of \eqref{epsapproximate} to get
\begin{equation}
\partial_t\left(\curl_{\tilde\a^\kappa}v\right)_i-b_0\cdot\nabla\left(\curl_{\tilde\a^\kappa}\left(b_0\cdot\nabla \eta\right)\right)_i=\left(\left[\curl_{\tilde\a^\kappa}, b_0\cdot\nabla\right](b_0\cdot\nabla \eta)\right)_i+\epsilon_{ij\ell} \partial_t\tilde\a^\kappa_{jm} \partial_mv_{\ell}.
\end{equation}
On the other hand, it follows from the third and first equations that
\begin{align}
\label{divk}
&\Div v=\int_0^t\partial_t\tilde\a^\kappa_{ij}\,d\tau\partial_j v^i,\\
&\partial_t\left(\Div_{\tilde\a^\kappa}(b_0\cdot \nabla \eta)\right)-\epsilon\Div_{\tilde\a^\kappa}\left((b_0\cdot\nabla)^3\eta\right)=\Div_{\tilde\a^\kappa}\left(b_0\cdot\nabla \tilde\psi\right)+\left[\Div_{\tilde\a^\kappa},b_0\cdot\nabla\right] v.
\end{align}
Then following the similar arguments in Section \ref{curle} and by the integration by parts as in \eqref{pp}, we may deduce that
\begin{equation}
\label{curleste}
\begin{split}
&\norm{\Div v(t)}_3^2+\norm{\curl v(t)}_{3}^2+\norm{\Div\left(b_0\cdot\nabla\eta\right)(t)}_3^2 +\norm{\curl\left(b_0\cdot\nabla\eta\right)(t)}_{3}^2
\\&\quad+\epsilon\int_0^t\norm{\Div\left((b_0\cdot\nabla)^2\eta\right) }_3^2+ \norm{\curl\left((b_0\cdot\nabla)^2\eta\right) }_3^2\,d\tau
\\&\quad \le M_0+\sqrt{T}P\left(\sup_{t\in[0,T]} \mathfrak{E}^{\epsilon}(t)\right)  P\left(\norm{(  \tilde\eta, b_0\cdot\nabla\tilde\eta,\partial_t\tilde\eta,b_0\cdot\nabla\psi)}_{L^\infty_TH^4}\right).
\end{split}
\end{equation}

{\it Step 5: synthesis.}
Finally, combining the estimates \eqref{etap}, \eqref{vves}, \eqref{te} and \eqref{curleste}, by using the normal trace estimates \eqref{gga} and the Hodge-type elliptic estimates \eqref{hodd}, we obtain
\begin{equation}
\label{eest}
\begin{split}
\sup_{t\in[0,T]} \mathfrak{E}^{\epsilon}(t)&\leq M_0+\sqrt{T}P_\kappa\left(\sup_{t\in[0,T]} \mathfrak{E}^{\epsilon}(t)\right)  P_\kappa\left(\norm{(  \tilde\eta, b_0\cdot\nabla\tilde\eta,\partial_t\tilde\eta,\psi,b_0\cdot\nabla\psi)}_{L^\infty_TH^4}\right).
\end{split}
\end{equation}
A continuous argument on \eqref{eest} proves the claim \eqref{epsclaim}.
\end{proof}

\subsubsection{Solvability of \eqref{lvapproximate}}
The existence of a unique solution to the linearized $\kappa$-approximate problem \eqref{lvapproximate} is recorded in the following theorem.

\begin{theorem}
\label{linearthm}
Suppose that the initial data $ v_0 \in H^4(\Omega)$ with $\Div v_0=0$ and that $b_0 \in H^4(\Omega)$ satisfies \eqref{bcond}. Then there exists a $T_{\kappa}>0$ and a unique solution $(v, q, \eta)$ to  \eqref{lvapproximate} on $[0, T_{\kappa}]$ that satisfy
\begin{equation}
\norm{v(t)}_4^2+\norm{\eta(t)}_4^2+\norm{b_0\cdot\nabla\eta(t)}_4^2  \leq 2 M_0 .
\end{equation}
\end{theorem}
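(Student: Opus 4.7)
The plan is to obtain the solution to \eqref{lvapproximate} as the limit of the sequence $(v^\varepsilon, q^\varepsilon, \eta^\varepsilon)$ of solutions to \eqref{epsapproximate} constructed in Section \ref{solv es}, using the $\varepsilon$-independent estimate \eqref{epsclaim} proved in Section \ref{epes}. First, I would show that the existence time $T_\kappa$ in Section \ref{epes} is indeed independent of $\varepsilon$; this follows from a standard continuation argument since the bound \eqref{epsclaim} precludes blow-up of the norm that controls the fixed point argument of Section \ref{solv es}.

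Next I would extract convergent subsequences. From \eqref{epsclaim} we have a uniform bound
\begin{equation}
\sup_{[0,T_\kappa]}\left(\|v^\varepsilon\|_4^2+\|\eta^\varepsilon\|_4^2+\|b_0\cdot\nabla\eta^\varepsilon\|_4^2\right) + \varepsilon\int_0^{T_\kappa}\|(b_0\cdot\nabla)^2\eta^\varepsilon\|_4^2\,d\tau \le 2M_0,
\end{equation}
and the pressure estimate \eqref{qqes} together with the equations of \eqref{epsapproximate} gives $\partial_t v^\varepsilon$ bounded in $L^\infty_{T_\kappa}H^3$ and $\partial_t \eta^\varepsilon$ bounded in $L^\infty_{T_\kappa}H^4$ (recall that $\varepsilon(b_0\cdot\nabla)^2\eta^\varepsilon\to 0$ strongly in $L^2_{T_\kappa}H^4$ since its norm there is $\le\sqrt{\varepsilon}\sqrt{2M_0}$). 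By the Aubin--Lions lemma I would then pass to a subsequence along which
\begin{equation}
(v^\varepsilon,\eta^\varepsilon,b_0\cdot\nabla\eta^\varepsilon)\wstar (v,\eta,b_0\cdot\nabla\eta)\text{ in }L^\infty_{T_\kappa}H^4,\quad (v^\varepsilon,\eta^\varepsilon)\to(v,\eta)\text{ in }C([0,T_\kappa];H^{4-\delta})
\end{equation}
for any small $\delta>0$, and $q^\varepsilon\wstar q$ in a suitable space. Because $\tilde\a^\kappa$ and $\psi$ do not depend on $\varepsilon$, the linear terms $\nabla_{\tilde\a^\kappa}q^\varepsilon$ and $\mathrm{Div}_{\tilde\a^\kappa}v^\varepsilon$ pass to the limit directly, and the term $\varepsilon(b_0\cdot\nabla)^2\eta^\varepsilon$ vanishes in distributions. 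Therefore $(v,q,\eta)$ satisfies \eqref{lvapproximate} in the sense of distributions, and the lower semicontinuity of norms under weak-$\ast$ limits gives the asserted bound.

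For uniqueness, let $(v^i,q^i,\eta^i)$ ($i=1,2$) be two solutions of \eqref{lvapproximate} with the same data and set $(\bar v,\bar q,\bar\eta)=(v^1-v^2,q^1-q^2,\eta^1-\eta^2)$. Since $\tilde\a^\kappa$ is the same for both, $\bar\eta$ satisfies the transport equation $\partial_t\bar\eta=\bar v$ with zero initial data, $(\bar v,\bar q)$ satisfies a linear forced incompressible Euler system with forcing $(b_0\cdot\nabla)^2\bar\eta$, and $\bar q$ satisfies the elliptic problem \eqref{eqforq} with right-hand side controlled by $\bar v$ and $(b_0\cdot\nabla)^2\bar\eta$. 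A basic energy estimate, similar to (but much simpler than) Proposition \ref{basic}, on $\|\bar v\|_0^2+\|b_0\cdot\nabla\bar\eta\|_0^2$ closes via Gronwall and yields $\bar v\equiv 0$, $\bar\eta\equiv 0$, and hence $\bar q\equiv 0$.

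The main obstacle, as in similar constructions, is to ensure that the non-standard term $\varepsilon(b_0\cdot\nabla)^2\eta^\varepsilon$ causes no loss when passing to the limit. The key point is that the $\varepsilon$-independent bound from \eqref{epsclaim} only controls $\sqrt{\varepsilon}\|(b_0\cdot\nabla)^2\eta^\varepsilon\|_{L^2_{T_\kappa}H^4}$, but this is precisely enough to make $\varepsilon(b_0\cdot\nabla)^2\eta^\varepsilon\to 0$ strongly in $L^2_{T_\kappa}H^4$, so the limit equation loses the regularization term exactly as desired. The absence of a boundary condition on $\eta$ in \eqref{epsapproximate} (guaranteed by $b_0\cdot N=0$ on $\Gamma$) ensures that no boundary layer forms as $\varepsilon\to 0$.
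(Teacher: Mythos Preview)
Your proposal is correct and follows essentially the same route as the paper, which simply states that the uniform estimates \eqref{epsclaim} allow one to pass to the limit $\varepsilon\to 0$ in \eqref{epsapproximate}, and that uniqueness follows ``by the exactly same arguments.'' You have supplied the compactness details (Aubin--Lions, weak-$\ast$ limits, lower semicontinuity, vanishing of $\varepsilon(b_0\cdot\nabla)^2\eta^\varepsilon$) that the paper leaves implicit.

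One small caveat on uniqueness: the pure $L^2$ energy identity you propose does not quite close by itself, because the pressure term $\int q\,(\partial_j\tilde\a^\kappa_{ij})\bar v_i$ requires $\|\bar q\|_0$, and the elliptic estimate for $\bar q$ (cf.~Proposition~\ref{pressure}) returns $\|\bar q\|_1$ in terms of $\|D\bar v\|_0+\|D(b_0\cdot\nabla\bar\eta)\|_0$ rather than the $L^2$ norms alone. The paper's phrase ``the exactly same arguments'' is meant to point to rerunning the estimates of Section~\ref{epes} (with $\varepsilon=0$ and zero initial data) at the $H^4$ level for the difference; since the problem is linear with fixed coefficients $\tilde\a^\kappa$, this yields $\sup_{[0,T]}\widetilde{\mathfrak E}(t)\le \sqrt{T}\,P_\kappa\cdot\sup_{[0,T]}\widetilde{\mathfrak E}(t)$ and hence $\widetilde{\mathfrak E}\equiv 0$ for $T$ small. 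Replacing your $L^2$ step with this (or with any closed energy at, say, $H^1$ or $H^2$) fixes the issue.
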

\begin{proof}
The uniform estimates \eqref{epsclaim} allows us to pass to the limit as $\varepsilon \rightarrow 0$ in \eqref{epsapproximate} to produce a solution to  \eqref{lvapproximate}, while the uniqueness follows by the exactly same arguments.
\end{proof}

\subsection{Iteration scheme}\label{diff}
In order to produce the solution to the nonlinear $\kappa$-approximate problem \eqref{approximate}, we will pass to the limit as $n\rightarrow\infty$ in a sequence of approximate solutions $\{v^{(n)}, q^{(n)}), (\eta^{(n)}\}_{n=0}^\infty$, which will be constructed in the following via the linearization iteration by using the linearized $\kappa$-approximate problem \eqref{lvapproximate}.

In the following construction of $\{(v^{(n)}, q^{(n)}, \eta^{(n)})\}_{n=0}^\infty$, we denote $\a^{\kappa(n)} =\a( \eta^{\kappa(n)})$ with $\eta^{\kappa(n)}$  determined by the $\eta^{(n)}$ through \eqref{etadef} and $\psi^{\kappa(n)}=\psi^{\kappa}(v^{(n)}, \eta^{(n)}, \eta^{\kappa(n)})$ determined by the $\eta^{(n)}, v^{(n)}$ through \eqref{etaaa}. We set $(v^{(0)}, q^{(0)}, \eta^{(0)})=(v^{(1)}, q^{(1)}, \eta^{(1)})=(0, 0, \text{Id})$, and hence $\a^{\kappa(0)}=\a^{\kappa(1)}=I$ and $\psi^{\kappa(0)}=\psi^{\kappa(1)}=0$. Note that $\dt \eta^{(1)}=v^{(1)}+\psi^{\kappa(0)}$. Now suppose that $(v^{(n)}, q^{(n)}, \eta^{(n)})$, $n\geq 1$, and hence $\psi^{\kappa(n)}$ are given such that $v^{(n)}$, $\eta^{(n)}, b_0\cdot\nabla \eta^{(n)}$, $\partial_t \eta^{(n)}=v^{(n)}+\psi^{\kappa(n-1)}$, $\psi^{\kappa(n)}$, $b_0\cdot\nabla \psi^{\kappa(n)}$ $\in L^{\infty}(0,T;H^4(\Omega))$, $  \eta^{(n)}\mid_{t=0}=Id$ and that
\begin{align}
\abs{ J^{\kappa(n)}(t)-1}\leq \dfrac{1}{8} \text{ and } \abs{\a_{ij}^{\kappa(n)}(t)-\delta_{ij}}\leq \dfrac{1}{8} \text{ in }\Omega.
\end{align}
Then by Theorem \ref{linearthm}, we can construct $(v^{(n+1)},$ $q^{(n+1)}, \eta^{(n+1)})$ as the solution to \eqref{lvapproximate} with $\tilde\eta=\eta^{(n)}$ and $\psi=\psi^{\kappa(n)}$. That is,
\begin{equation}  \label{its}
\begin{cases}
\partial_t\eta^{(n+1)} =v^{(n+1)}+\psi^{\kappa(n)} &\text{in } \Omega,\\
\partial_tv^{(n+1)}  +\nabla_{\a^{\kappa(n)}} q^{(n+1)} =(b_0\cdot\nabla)^2\eta^{(n+1)}  &\text{in } \Omega,\\
\Div_{\a^{\kappa(n)}} v^{(n+1)} = 0 &\text{in  }\Omega,\\
q^{(n+1)}=0 & \text{on  }\Gamma,\\
(\eta^{(n+1)}, v^{(n+1)})\mid_{t=0} =(\text{Id}, v_0).
\end{cases}
\end{equation}

We define a higher order energy functional
\begin{equation}
\mathfrak{E}^{(n)}(t):=\norm{v^{(n)}(t)}_4^2+\norm{\eta^{(n)}(t)}_4^2+\norm{b_0\cdot\nabla\eta^{(n)}(t)}_4^2.
\end{equation}
We claim that by taking $T_\kappa$ sufficiently small (only depends on $M_0$ and $\kappa>0$), it holds that
\begin{equation}\label{claim1}
\sup_{t\in[0,T]}\mathfrak{E}^{(n)}(t)\leq 2M_0.
\end{equation}
We shall prove the claim \eqref{claim1} by the induction. First it holds for $n=0,1$. Now suppose that it holds for $ n\le m$ for $m\ge 1$, then we prove that it holds for $n=m+1$. First, it holds from the induction assumption and the estimates \eqref{tes1}--\eqref{fest22} that
\begin{equation}
\norm{\left(\partial_t \eta^{(m)},\psi^{\kappa(m)},b_0\cdot\nabla \psi^{\kappa(m)}\right)}_{L^\infty_TH^4}\le P\left(M_0 \right).
\end{equation}
Then from the estimates \eqref{eest}, we obtain
\begin{equation}
\sup_{t\in[0,T]}\mathfrak{E}^{(m+1)}(t)\leq M_0+\sqrt TP_\kappa\left(\sup_{t\in[0,T]}\mathfrak{E}^{(m+1)}(t)\right)P_\kappa\left(M_0 \right).
\end{equation}
Taking $T_\kappa$ sufficiently small (only depends on $M_0$ and $\kappa>0$), we conclude the claim \eqref{claim1}.

Now we will prove that the sequence $\{(v^{(n)}, q^{(n)}, \eta^{(n)})\}_{n=0}^\infty$ converges in certain strong norm. Let $n\ge 3$ and denote the differences:
\begin{equation}
\quad \bar v^{(n)}=v^{(n+1)}-v^{(n)},\quad\bar q^{(n)}=q^{(n+1)}-q^{(n)}, \bar\eta^{(n)} =\eta^{(n+1)}-\eta^{(n)}.
\end{equation}
Also, we denote
\begin{equation}
\bar\a^{\kappa(n)}=\a^{\kappa(n)}-\a^{\kappa(n-1)},\quad \bar\psi^{\kappa(n)}=\psi^{\kappa(n)}-\psi^{\kappa(n-1)}.
\end{equation}
We find that
\begin{equation}
\label{diffe}
\begin{cases}
\partial_t\bar\eta^{(n)}=\bar v^{(n)}+\bar\psi^{\kappa(n)} &\text{in } \Omega\\
\partial_t\bar v^{(n)}_i+\a^{\kappa(n)}_{ij}\partial_j\bar q^{(n)}=(b_0\cdot \nabla)^2 \bar\eta^{(n)}_i-\bar\a^{\kappa(n)}_{ij}\partial_j q^{(n)} &\text{in } \Omega,\\
\a^{\kappa(n)}_{ij}\partial_j\bar v^{(n)}_i=-\bar\a^{\kappa(n)}_{ij}\partial_j v^{(n)}_i &\text{in } \Omega,\\
\bar q^{(n)}=0 &\text{on }\Gamma,\\
 (\bar \eta^{(n)},\bar v^{(n)})|_{t=0}=(0, 0).
\end{cases}
\end{equation}

We will now estimate the differences in $H^3$ norm. We will use a similar strategy which was used in Sections \ref{apest} and \ref{epes}, and again we divide our estimates into several steps.

{\it Step 1: preliminary estimates of $\bar\a^{\kappa(n)}$ and $\bar\psi^{\kappa(n)}$.}  First, note that
\begin{align}
\bar\a^{\kappa(n)}_{ij}(t) &= \int_0^t\partial_t\left (\a^{\kappa(n)}_{ij}-\a^{\kappa(n-1)}_{ij}\right )\,d\tau
 \\&=\int_0^t \left(\bar\a^{\kappa(n)}_{i\ell}\partial_t\partial_{\ell}\eta^{\kappa(n)}_m\a^{\kappa(n)}_{mj}
+\a^{\kappa(n-1)}_{i\ell}\partial_t\partial_{\ell}\bar\eta^{\kappa(n-1)}_m\a^{\kappa(n)}_{mj}
+\a^{\kappa(n-1)}_{i\ell}\partial_t\partial_{\ell}\eta_m^{\kappa(n-1)}\bar\a^{\kappa(n)}_{mj} \right). \nonumber
\end{align}
Hence, we have, by the first equation of \eqref{diffe},
\begin{equation}\label{in1}
\begin{split}
\norm{\bar\a^{\kappa(n)}_{ij}(t)}_2^2
&\le P(M_0)T^2\left( \norm{\bar\a^{\kappa(n)}_{ij}}_{L^{\infty}_TH^2}^2+\norm{\partial_t \bar\eta^{\kappa(n-1)}}_{L^{\infty}_TH^3}^2\right)
\\&\le P(M_0)T^2\left(\norm{\bar\a^{\kappa(n)}_{ij}}_{L^{\infty}_TH^2}^2+\norm{\left(\bar v^{(n-1)},\bar\psi^{\kappa(n-1)}\right)}_{L^{\infty}_TH^3}^2\right).
\end{split}
\end{equation}
On the other hand, note that
\begin{equation}
-\Delta \bar\psi^{\kappa(n)}=0 \text{ in } \Omega,
\end{equation}
and
\begin{equation}
\begin{split}
\bar\psi^{\kappa(n)}=\Delta_{*}^{-1} \mathbb{P}\bigg(&\Delta_{*}\bar\eta^{(n-1)}_{j}\a^{\kappa(n)}_{j\alpha}\partial_{\alpha}\Lambda_{\kappa}^2 v^{(n)}+\Delta_*\eta_j^{(n-1)}\bar\a^{\kappa(n)}_{j\alpha}\partial_{\alpha}\Lambda_{\kappa}^2 v^{(n)}\\&+\Delta_*\eta_j^{(n-1)}\a^{\kappa(n-1)}_{j\alpha}\partial_{\alpha}\Lambda_{\kappa}^2\bar v^{(n-1)}-\Delta_{*}\Lambda_{\kappa}^2\bar\eta^{(n-1)}_{j}\a^{\kappa(n)}_{j\alpha}\partial_{\alpha} v^{(n)}\\&-\Delta_{*}\Lambda_{\kappa}^2\eta^{(n-1)}_{j}\bar\a^{\kappa(n)}_{j\alpha}\partial_{\alpha}v^{(n)}-\Delta_{*}\Lambda_{\kappa}^2\eta^{(n-1)}_{j}\a^{\kappa(n-1)}_{j\alpha}\partial_{\alpha} \bar v^{(n-1)}\bigg)\ \text{on } \Gamma.
\end{split}
\end{equation}
Thus, by the elliptic estimates, we have
\begin{equation}
\label{in2}
\norm{\bar\psi^{\kappa(n)}}_3^2\ls \abs{\bar\psi^{\kappa(n)}}_{5/2}^2\le P(M_0)\left(\norm{\bar\eta^{(n-1)}}_3^2+\norm{\bar v^{(n-1)}}_2^2+\norm{\bar \a^{\kappa(n)}}_1^2\right).
\end{equation}

Consequently, combining \eqref{in1} and \eqref{in2}, we get
\begin{equation}
\label{dest1}
\norm{\bar\a^{\kappa(n)}(t)}_2^2\le P(M_0) T^2\left(\norm{\left(\bar\a^{\kappa(n)}, \bar\a^{\kappa(n-1)}\right)}_{L^{\infty}_TH^2}^2+\norm{\left(\bar v^{(n-1)}, \bar v^{(n-2)}, \bar\eta^{(n-2)}\right)}_{L^{\infty}_TH^3}^2\right).
\end{equation}

{\it Step 2: estimates of $\bar\eta^{(n)}$.}  By the first equation of \eqref{diffe} and the estimate \eqref{in2}, we have
\begin{equation}
\label{dest2}
\begin{split}
\norm{\bar\eta^{(n)}(t)}_3^2&\leq T^2\norm{\left(\bar v^{(n)},\bar\psi^{\kappa(n)}\right)}_{L^{\infty}_TH^3}^2
\\&\le P(M_0) T^2 \left(\norm{\left(\bar v^{(n)}, \bar v^{(n-1)}, \bar\eta^{(n-1)}\right)}_{L^{\infty}_TH^3}^2+\norm{\bar\a^{\kappa(n)}}_{L^{\infty}_TH^2}^2\right).
\end{split}
\end{equation}

{\it Step 3: estimates of $\bar q^{(n)}$.} Applying $J^{\kappa(n)}\a^{\kappa(n)}_{i\ell}\partial_\ell$ to the second equation of \eqref{diffe}, we have
\begin{equation}
\begin{split}
\partial_\ell(J^{\kappa(n)}\a^{\kappa(n)}_{i\ell}\a^{\kappa(n)}_{ij}\partial_j \bar q^{(n)})=&J^{\kappa(n)}\partial_t\a^{\kappa(n)}_{i\ell}\partial_\ell \bar v^{(n)}_i-J^{\kappa(n)}\partial_t\left(\bar\a^{\kappa(n)}_{i\ell}\partial_\ell v^{(n)}_i\right)
\\&-J^{\kappa(n)}\a^{\kappa(n)}_{i\ell}\partial_\ell\left(\bar\a^{\kappa(n)}_{ij}\partial_jq^{(n)}\right)
+\a^{\kappa(n)}_{i\ell}\partial_\ell\left((b_0\cdot\nabla)^2\bar\eta^{(n)}_i\right).
\end{split}
\end{equation}
By the similar arguments in Section \ref{pressure1}, we obtain
\begin{equation}
\label{dpr}
\begin{split}
\norm{\bar q^{(n)}}_3^2\le P(M_0) \left( \norm{\left( \bar v^{(n)},b_0\cdot\nabla\bar\eta^{(n)}, \bar v^{(n-1)}, \bar v^{(n-2)}, \bar\eta^{(n-2)} \right) }_{3}^2+\norm{\left(\bar\a^{\kappa(n)},\bar\a^{\kappa(n-1)}\right)}_{2}^2\right).
\end{split}
\end{equation}

{\it Step 4: tangential energy estimates.} We will not use \eqref{diffe} to estimate the tangential derivatives of the differences.  We shall again use Alinac's good unknowns:
\begin{equation}
\mathcal V^{(n+1)}=\bp^3v^{(n+1)}-\bp^3\eta^{\kappa(n)}_m\a^{\kappa(n)}_{mj}\partial_j v^{(n+1)},\quad
\mathcal Q^{(n+1)}=\bp^3q^{(n+1)}-\bp^3\eta^{\kappa(n)}_m\a^{\kappa(n)}_{mj}\partial_j q^{(n+1)}.
\end{equation}
We denote $\bar{\mathcal V}^{(n)}=\mathcal V^{(n+1)}-\mathcal V^{(n)}, \bar{\mathcal Q}^{(n)}=\mathcal Q^{(n+1)}-\mathcal Q^{(n)}$, then we have that
\begin{equation}
\label{p1}
\partial_t\bar{\mathcal V}^{(n)}_i+\a^{\kappa(n)}_{ij}\partial_j \bar{\mathcal Q}^{(n)}-(b_0\cdot\nabla)\bp^3(b_0\cdot\nabla\bar\eta^{(n)}_i)=-\bar\a^{\kappa(n)}_{ij}\partial_j\mathcal Q^{(n)}+f^{(n)}
\end{equation}
and
\begin{equation}
\a^{\kappa(n)}_{ij}\partial_j \bar{\mathcal V}^{(n)}_i=-\bar\a^{\kappa(n)}_{ij}\partial_j \mathcal V^{(n)}_i+g^{(n)},
\end{equation}
where
\begin{align*}
f^{(n)}=&\partial_t\left(\bar\partial^3{\bar\eta^{\kappa(n-1)}}_m\a^{\kappa(n)}_{mj}\partial_jv^{(n+1)}_i
+\bp^3\eta^{\kappa(n-1)}_m\bar\a^{\kappa(n)}_{mj}\partial_jv^{(n+1)}_i+\bp^3\eta^{\kappa(n-1)}_m\a^{\kappa(n-1)}_{mj}\partial_j\bar v^{(n)}_i\right)
\\&+\bar\a^{\kappa(n)}_{mj}\partial_{j}\left(\a^{\kappa(n)}_{i\ell}\partial_\ell q^{(n+1)}\right)\bar\partial^3\eta^{\kappa(n)}_m
+\a^{\kappa(n-1)}_{mj}\partial_{j}\left(\bar\a^{\kappa(n)}_{i\ell}\partial_\ell q^{(n+1)}\right)\bar\partial^3\eta^{\kappa(n)}_m
\\&+\a^{\kappa(n-1)}_{mj}\partial_{j}\left(\a^{\kappa(n-1)}_{i\ell}\partial_\ell\bar q^{(n)}\right)\bar\partial^3\eta^{\kappa(n)}_m+\a^{\kappa(n-1)}_{mj}\partial_{j}\left(\a^{\kappa(n-1)}_{i\ell}\partial_\ell q^{(n)}\right)\bar\partial^3\bar\eta^{\kappa(n-1)}_m
\\&-\left[\bar\partial^{2},\bar\a^{\kappa(n)}_{mj}\a^{\kappa(n)}_{i\ell}\bar\partial\right]\partial_{\ell}\eta^{\kappa(n)}_m\partial_j q^{(n+1)}
-\left[\bar\partial^{2},\a^{\kappa(n-1)}_{mj}\bar\a^{\kappa(n)}_{i\ell}\bar\partial\right]\partial_{\ell}\eta^{\kappa(n)}_m\partial_j q^{(n+1)}
\\
&-\left[\bar\partial^{2},\a^{\kappa(n-1)}_{mj}\a^{\kappa(n-1)}_{i\ell}\bar\partial\right]\partial_{\ell}\bar\eta^{\kappa(n-1)}_m\partial_j q^{(n+1)}-\left[\bar\partial^{2},\a^{\kappa(n-1)}_{mj}\a^{\kappa(n-1)}_{i\ell}\bar\partial\right]\partial_{\ell}\eta^{\kappa(n-1)}_m\partial_j \bar q^{(n)}
\\
&-\left[\bar\partial^3, \bar\a^{\kappa(n)}_{ij},\partial_j\right] q^{(n+1)}-\left[\bar\partial^3, \a^{\kappa(n-1)}_{ij},\partial_j\right] \bar q^{(n)}+\left[\bar\partial^3, b_0\cdot\nabla\right]b_0\cdot\nabla\bar\eta^{(n)}_i
\end{align*}
and
\begin{align*}
g^{(n)}=&-\left[\bar\partial^2, \bar\a^{\kappa(n)}_{mj}\a^{\kappa(n)}_{i\ell}\bar\partial\right]\partial_{\ell}\eta^{\kappa(n)}_m\partial_j v^{(n+1)}_i-\left[\bar\partial^2, \a^{\kappa(n-1)}_{mj}\bar\a^{\kappa(n)}_{i\ell}\bar\partial\right]\partial_{\ell}\eta^{\kappa(n)}_m\partial_j v^{(n+1)}_i
\\&-\left[\bar\partial^2, \a^{\kappa(n-1)}_{mj}\a^{\kappa(n-1)}_{i\ell}\bar\partial\right]\partial_{\ell}\bar\eta^{\kappa(n-1)}_m\partial_j v^{(n+1)}_i-\left[\bar\partial^2, \a^{\kappa(n-1)}_{mj}\a^{\kappa(n-1)}_{i\ell}\bar\partial\right]\partial_{\ell}\eta^{\kappa(n-1)}_m\partial_j \bar v^{(n)}_i
\\&-\left[\bar\partial^3, \bar\a^{\kappa(n)}_{ij},\partial_j\right]v^{(n+1)}_i-\left[\bar\partial^3, \a^{\kappa(n-1)}_{ij},\partial_j\right]\bar v^{(n)}_i+\bar\a^{\kappa(n)}_{mj}\partial_{j}\left(\a^{\kappa(n)}_{i\ell}\partial_\ell v^{(n+1)}_i\right)\bar\partial^3\eta^{\kappa(n)}_m
\\
&+\a^{\kappa(n-1)}_{mj}\partial_{j}\left(\bar\a^{\kappa(n)}_{i\ell}\partial_\ell v^{(n+1)}_i\right)\bar\partial^3\eta^{\kappa(n)}_m+\a^{\kappa(n-1)}_{mj}\partial_{j}\left(\a^{\kappa(n-1)}_{i\ell}\partial_\ell\bar v^{(n)}_i\right)\bar\partial^3\eta^{\kappa(n)}_m\\&+\a^{\kappa(n-1)}_{mj}\partial_{j}\left(\a^{\kappa(n-1)}_{i\ell}\partial_\ell v^{(n)}_i\right)\bar\partial^3\bar\eta^{\kappa(n-1)}_m.
\end{align*}
Then we find
\begin{equation}\label{ttttt1}
\begin{split}
 &\dfrac{1}{2}\dfrac{d}{dt}\int_{\Omega}\abs{\bar{\mathcal V}^{(n)}}^2+\int_{\Omega} \nabla_{\a^{\kappa(n)}} \bar{\mathcal Q}^{(n)}\cdot \bar{\mathcal V}^{(n)} +\int_{\Omega} \bar\partial^3\left(b_0\cdot\nabla\bar\eta^{(n)}_i\right)   b_0\cdot\nabla \bar{\mathcal V}^{(n)}_i
 \\&\quad=\int_{\Omega} \left(f^{(n)}-\bar\a^{\kappa(n)}_{ij}\partial_j\mathcal Q^{(n)}\right)\cdot \bar{\mathcal V}^{(n)}.
 \end{split}
\end{equation}

We estimate in an elementary way as usual to deduce that
 \begin{align}
 \label{hg}
&\int_{\Omega} \left(f^{(n)}-\bar\a^{\kappa(n)}_{ij}\partial_j\mathcal Q^{(n)}\right)\cdot \bar{\mathcal V}^{(n)}
\\& \quad\le P(M_0) \left(
 \norm{\left(\bar v^{(n)}, b_0\cdot\nabla\bar\eta^{(n)}, \bar v^{(n-1)},\bar\eta^{(n-1)}, \bar v^{(n-2)}, \bar\eta^{(n-2)} \right)}_{3}^2+\norm{\left(\bar\a^{\kappa(n)}, \bar\a^{\kappa(n-1)}\right)}_{2}^2\right).
\nonumber
 \end{align}
By the integration by parts and since $ \bar{q}^{(n)}=q^{(n)}=0$ on $\Gamma$, using \eqref{loss} and the estimate \eqref{dpr}, we obtain
\begin{align}
\nonumber
& \int_{\Omega} \nabla_{\a^{\kappa(n)}} \bar{\mathcal Q}^{(n)}\cdot \bar{\mathcal V}^{(n)} =\int_{\Gamma} \bar{\mathcal Q}^{(n)}\a^{\kappa(n)}_{i\ell}N_\ell \bar{\mathcal V}^{(n)}_i -\int_{\Omega} \bar{\mathcal Q}^{(n)} \nabla_{\a^{\kappa(n)}}\cdot \bar{\mathcal V}^{(n)}-\int_{\Omega}\pa_\ell (\a^{\kappa(n)}_{i\ell} )\bar{\mathcal Q}^{(n)}\bar{\mathcal V}^{(n)}_i \\\nonumber&\quad=
- \int_{\Gamma} \pa_3 \bar q^{(n)} \bp^3(\Lambda_\kappa^2\eta^{(n)}_j)\a^{\kappa(n)}_{j3} \a^{\kappa(n)}_{i\ell}N_\ell \bar{\mathcal V}^{(n)}_i \\\nonumber&\qquad-\int_{\Gamma} \pa_3 q^{(n)} \left(\bp^3(\Lambda_\kappa^2\bar\eta^{(n-1)}_j)\a^{\kappa(n)}_{j3}+\bp^3(\Lambda_\kappa^2\eta^{(n-1)}_j)\bar\a^{\kappa(n)}_{j3}\right) \a^{\kappa(n)}_{i\ell}N_\ell \bar{\mathcal V}^{(n)}_i \\\nonumber&\qquad+\int_{\Omega} \left(\bar{\mathcal{Q}}^{(n)}\bar\a^{\kappa(n)}_{i\ell}\partial_\ell\mathcal V^{(n)}_i-\bar{\mathcal Q}^{(n)}g^{(n)}-\pa_\ell (\a^{\kappa(n)}_{i\ell} )\bar{\mathcal Q}^{(n)} \bar{\mathcal V}^{(n)}_i\right)
\\&\nonumber\quad\le P(M_0) \left( \dfrac{1}{\kappa}\abs{\bp^2\bar\eta^{(n-1)}}_{1/2}\abs{\bar{\mathcal V}^{(n)}}_{-1/2}+\norm{\bar\a^{\kappa(n)}}_2\abs{\bar{\mathcal V}^{(n)}}_{-1/2}\right.
\\&\nonumber\qquad\qquad\qquad\left.+\norm{\left(\bar v^{(n)}, b_0\cdot\nabla\bar\eta^{(n)},\bar v^{(n-1)},\bar\eta^{(n-1)},\bar v^{(n-2)},\bar\eta^{(n-2)}\right)}_{3}^2+\norm{\left(\bar\a^{\kappa(n)},\bar\a^{\kappa(n-1)}\right)}_{2}^2\right)
\\&\quad\le P_\kappa(M_0)\left(\norm{\left(\bar v^{(n)}, b_0\cdot\nabla\bar\eta^{(n)},\bar v^{(n-1)},\bar\eta^{(n-1)},\bar v^{(n-2)},\bar\eta^{(n-2)}\right)}_{3}^2+\norm{\left(\bar\a^{\kappa(n)},\bar\a^{\kappa(n-1)}\right)}_{2}^2\right).
\label{hg2}
\end{align}
By using the first equation of \eqref{diffe}, we have
\begin{equation}
\begin{split}
 &\int_{\Omega}  \bp^3(b_0\cdot\nabla\bar\eta^{(n)}_i)    b_0\cdot\nabla \bar{\mathcal V}^{(n)}_i
\\ &\quad=\hal \dfrac{d}{dt}\int_{\Omega}\abs{\bp^3(b_0\cdot\nabla\bar\eta^{(n)})}^2
 -\int_{\Omega}  \bp^3(b_0\cdot\nabla\bar\eta^{(n)}_i)  \bp^3(b_0\cdot\nabla  \bar\psi^{\kappa(n)}_i)
\\&\qquad+\int_{\Omega} \bp^3(b_0\cdot\nabla\bar\eta^{(n)}_i) \left([\bar\partial^3,b_0\cdot\nabla] \bar v^{(n)}_i+b_0\cdot\nabla\left(\bp^3\bar\eta^{\kappa(n-1)}_m\a^{\kappa(n)}_{mj}\partial_jv^{(n+1)}_i\right)\right)
\\&\qquad+\int_{\Omega} \bp^3(b_0\cdot\nabla\bar\eta^{(n)}_i) b_0\cdot\nabla\left(
\bp^3\eta^{\kappa(n-1)}_m\bar\a^{\kappa(n)}_{mj}\partial_jv^{(n+1)}_i+\bp^3\eta^{\kappa(n-1)}_m\a^{\kappa(n-1)}_{mj}\partial_j \bar v^{(n)}_i\right)
\\ &\quad\ge\hal \dfrac{d}{dt}\int_{\Omega}\abs{\bp^3(b_0\cdot\nabla\bar\eta^{(n)})}^2
\\&\qquad- P(M_0)\bigg(\norm{\left(\bar v^{(n)},b_0\cdot\nabla\bar\eta^{(n)}, \bar v^{(n-1)},\bar\eta^{(n-1)}, b_0\cdot\nabla\bar\eta^{(n-1)}\right)}_3^2
+\norm{\left(\bar\a^{\kappa(n)},\bar\a^{\kappa(n-1)}\right)}_2^2\bigg).
\end{split}
\label{hg3}
\end{equation}
Here we have used the estimates
\begin{equation}
\norm{b_0\cdot\nabla\bar\psi^{\kappa(n)}}_3^2\leq P(M_0)\left(\norm{\left(\bar v^{(n-1)}, \bar\eta^{(n-1)}, b_0\cdot\nabla\bar\eta^{(n-1)}\right)}_3^2+
\norm{\bar\a^{\kappa(n)}}_2^2\right).
\end{equation}
Hence, integrating \eqref{ttttt1} in time directly  and then using the estimates \eqref{hg}--\eqref{hg3}, we obtain
\begin{align}
\nonumber&\norm{\bar{\mathcal V}^{(n)}(t)}_0^2+\norm{\bp^3 (b_0\cdot\nabla \bar\eta^{(n)})(t)}_0^2
\\&\leq P_\kappa(M_0) T^2\left(\norm{\left(\bar v^{(n)},b_0\cdot\nabla\bar\eta^{(n)},\bar v^{(n-1)},\bar\eta^{(n-1)},b_0\cdot\nabla\bar\eta^{(n-1)},\bar v^{(n-2)},\bar\eta^{(n-2)}\right)}_{L^{\infty}_TH^3}^2\right.
\\&\qquad\qquad\qquad\left.+\norm{\left(\bar\a^{\kappa(n)},\bar\a^{\kappa(n-1)}\right)}_{L^{\infty}_TH^2}^2\right).\nonumber
\end{align}
By the definition of $\bar{\mathcal{V}}^{(n)}$, we have
\begin{align}\label{dest3}
\nonumber&\norm{\bar\partial^3\bar v^{(n)}(t)}_0^2+\norm{\bar\partial^3(b_0\cdot\nabla\bar\eta^{(n)})(t)}_0^2\\&\leq P_\kappa(M_0) T^2\left( \norm{\left(\bar v^{(n)}, b_0\cdot\nabla\bar\eta^{(n)}, \bar v^{(n-1)}, \bar\eta^{(n-1)},b_0\cdot\nabla\bar\eta^{(n-1)},\bar v^{(n-2)},\bar\eta^{(n-2)}\right)}_{L^{\infty}_TH^3}^2\right.
\\&\qquad\qquad\qquad\left.+\norm{\left(\bar\a^{\kappa(n)},\bar\a^{\kappa(n-1)}\right)}_{L^{\infty}_TH^2}^2\right).\nonumber
\end{align}

{\it Step 5: curl and divergence estimates.} It follows from \eqref{diffe} that
\begin{align}
&\Div_{\a^{\kappa(n)}}\bar v^{(n)}=-\bar\a^{\kappa(n)}_{ij}\partial_j v^{(n)}_i,\\
&\partial_t\left(\Div_{\a^{\kappa(n)}}b_0\cdot\nabla\bar\eta^{(n)}\right)=h^{(n)},
\\&
 \partial_t\left(\curl_{\a^{\kappa(n)}}\bar v^{(n)}\right)-b_0\cdot\nabla\left(\curl_{\a^{\kappa(n)}}\left(b_0\cdot\nabla \bar\eta^{(n)}\right)\right) =\phi^{(n)},
\end{align}
where
\begin{equation}\nonumber
\begin{split}
h^{(n)}=&\partial_t \a^{\kappa(n)}_{ij}\partial_j \left(b_0\cdot\nabla\bar\eta^{(n)}_i\right)+\left[\Div_{\a^{\kappa(n)}}, b_0\cdot\nabla\right] \bar v^{(n)} +\Div_{\a^{\kappa(n)}}b_0\cdot\nabla\bar\psi^{\kappa(n)}\\&-b_0\cdot\nabla\left(\bar\a^{\kappa(n)}_{ij}\partial_j v^{(n)}_i\right),
\\
\phi_i^{(n)}=&\left(\left[\curl_{\a^{\kappa(n)}}, b_0\cdot\nabla\right](b_0\cdot\nabla \bar\eta^{(n)})\right)_i+\epsilon_{ij\ell} \partial_t\a^{\kappa(n)}_{jm} \partial_m\bar v^{(n)}_\ell-\epsilon_{ij\ell} \bar\a^{\kappa(n)}_{jm} \partial_t\partial_mv^{(n)}_\ell\\&+b_0\cdot\nabla\left(\epsilon_{ij\ell}\bar\a^{\kappa(n)}_{jm} \partial_m\left(b_0\cdot\nabla\eta^{(n)}_\ell\right)\right)+\epsilon_{ij\ell}\left[\bar\a^{\kappa(n)}_{jm}\partial_m, b_0\cdot\nabla\right]b_0\cdot\nabla\eta^{(n)}_\ell.
\end{split}
\end{equation}
Then following a similar argument in Section \ref{curle}, we can obtain
\begin{align}
\label{dest4}
&\norm{\left(\Div \bar v^{(n)} ,\curl \bar v^{(n)},\Div (b_0\cdot\nabla\bar\eta^{(n)}) ,\curl(b_0\cdot\nabla\bar\eta^{(n)})\right)(t)}_2^2\nonumber\\&\quad\leq P_\kappa(M_0) T^2\left( \norm{\left(\bar v^{(n)}, b_0\cdot\nabla\bar\eta^{(n)}, \bar v^{(n-1)}, \bar\eta^{(n-1)}, b_0\cdot\nabla\bar\eta^{(n-1)}\right)}_{L^{\infty}_TH^3}^2\right.
\\&\quad\qquad\qquad\qquad\left.+\norm{\left(\bar\a^{\kappa(n)},\bar\a^{\kappa(n-1)}\right)}_{L^{\infty}_TH^2}^2\right).\nonumber
\end{align}
{\it Step 6: synthesis.} Finally, summing up the estimates \eqref{dest1}, \eqref{dest2}, \eqref{dpr}, \eqref{dest3}, and \eqref{dest4}, by using the trace estimates \eqref{gga} and Hodge-type elliptic estimates \eqref{hodd}, we obtain
\begin{equation}
\begin{split}
&\norm{\left(\bar v^{(n)},\bar\eta^{(n)}, b_0\cdot\nabla\bar\eta^{(n)}\right)(t)}_3^2+\norm{\bar\a^{\kappa(n)}(t)}_2^2\\& \quad\leq P_\kappa(M_0) T^2\bigg( \norm{\left(\bar v^{(n)},b_0\cdot\nabla\bar\eta^{(n)},\bar v^{(n-1)},\bar\eta^{(n-1)}, b_0\cdot\nabla\bar\eta^{(n-1)},\bar v^{(n-2)},\bar\eta^{(n-2)}\right)}_{L^{\infty}_TH^3}^2\\&\qquad\qquad\qquad\quad+\norm{\left(\bar\a^{\kappa(n)},\bar\a^{\kappa(n-1)}\right)}_{L^{\infty}_TH^2}^2\bigg).
\end{split}
\end{equation}
We denote
\begin{equation}\label{normnorm}
\Psi^{(n)}:=\norm{\left(\bar v^{(n)},\bar\eta^{(n)}, b_0\cdot\nabla\bar\eta^{(n)}\right)}_{L^{\infty}_TH^3}^2+\norm{\bar\a^{\kappa(n)}}_{L^{\infty}_TH^2}^2.
\end{equation}
Then by taking $T$ sufficiently small (only depends on $M_0$ and $\kappa>0$), we have
\begin{equation}
\Psi^{(n)}\le \frac{1}{8}\left(\Psi^{(n-1)}+\Psi^{(n-2)}\right).
\end{equation}
This implies $\Psi^{(n)}\le P_\kappa(M_0) 2^{-n}$, which yields the convergence of the sequence $\{(v^{(n)}, q^{(n)},\eta^{(n)})\}$ to a limit $(v, q, \eta)$ in the norm of \eqref{normnorm} as $n\rightarrow\infty$.

\subsection{Solvability of the nonlinear $\kappa$-approximate problem} We now record the existence of a unique solution to \eqref{approximate}.
\begin{theorem}
\label{nonlinearthm}
Suppose that the initial data $ v_0 \in H^4(\Omega)$ with $\Div v_0=0$ and that $b_0 \in H^4(\Omega)$ satisfies \eqref{bcond}. Then there exists a $T_{\kappa}>0$ and a unique solution $(v, q, \eta)$ to  \eqref{nonlinearthm} on $[0, T_{\kappa}]$ that satisfy
\begin{equation}\label{estima}
\norm{v(t)}_4^2+\norm{\eta(t)}_4^2+\norm{b_0\cdot\nabla\eta(t)}_4^2  \leq 2 M_0 .
\end{equation}
\end{theorem}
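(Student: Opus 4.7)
The plan is to synthesize the work of Section \ref{diff} into a single existence and uniqueness statement. Starting from the linearization iteration defined after Theorem \ref{linearthm}, I will produce the solution $(v,q,\eta)$ as the limit of the sequence $\{(v^{(n)}, q^{(n)}, \eta^{(n)})\}$. The uniform bound $\sup_{[0,T_\kappa]}\mathfrak E^{(n)}(t)\le 2M_0$ established in \eqref{claim1} gives weak-$\ast$ compactness in $L^\infty(0,T_\kappa; H^4(\Omega))$ for $v^{(n)}$, $\eta^{(n)}$ and $b_0\cdot\nabla\eta^{(n)}$. Combined with the contraction estimate $\Psi^{(n)}\le P_\kappa(M_0)2^{-n}$ in the weaker norm \eqref{normnorm}, the sequence is Cauchy in $L^\infty(0,T_\kappa; H^3(\Omega))$, hence converges strongly to some $(v,\eta)$ with $b_0\cdot\nabla\eta^{(n)}\to b_0\cdot\nabla \eta$ in the same norm, and $\bar\a^{\kappa(n)}\to 0$ in $L^\infty_T H^2$.

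Next I would pass to the limit in each equation of the iteration scheme \eqref{its}. The first equation $\partial_t\eta^{(n+1)}=v^{(n+1)}+\psi^{\kappa(n)}$ passes because $\psi^{\kappa}$ is continuous in $(\eta,v)$ in the $H^3$ norm by the elliptic problem \eqref{etaaa} and the estimates of Lemma \ref{preest}. For the momentum, divergence, and pressure equations, I would write $\a^{\kappa(n)}=\a(\eta^{\kappa(n)})$ and observe that $\eta\mapsto \a(\eta^{\kappa})$ is Lipschitz $H^3\to H^2$ via \eqref{etadef} and \eqref{partialF}, so that $\a^{\kappa(n)}\to \a^\kappa(\eta)$ strongly, allowing passage to the limit in $\nabla_{\a^{\kappa(n)}}q^{(n+1)}$, $\Div_{\a^{\kappa(n)}}v^{(n+1)}$, and $(b_0\cdot\nabla)^2\eta^{(n+1)}$. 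The pressure $q^{(n+1)}$ inherits a bound in $H^3$ via \eqref{qqes}, so after extracting a further subsequence it converges weakly to the unique solution $q$ of the corresponding elliptic problem. The boundary condition $q=0$ on $\Gamma$ and the initial data $(\eta,v)|_{t=0}=(Id, v_0)$ pass immediately. The improved regularity $v,\eta,b_0\cdot\nabla\eta\in L^\infty_TH^4$ and the bound \eqref{estima} then follow from the weak-$\ast$ lower semicontinuity of the $H^4$ norm applied to \eqref{claim1}.

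For uniqueness, given two solutions $(v^i, q^i, \eta^i)$, $i=1,2$, of \eqref{approximate} on $[0,T_\kappa]$ with the same initial data and both satisfying \eqref{estima}, I would rerun the difference scheme of Section \ref{diff} applied to the pair itself (not to consecutive iterates), producing an analog of \eqref{dest1}--\eqref{dest4} and concluding
\begin{equation}
\norm{(v^1-v^2,\eta^1-\eta^2,b_0\cdot\nabla(\eta^1-\eta^2))}_{L^\infty_TH^3}^2+\norm{\a^{\kappa,1}-\a^{\kappa,2}}_{L^\infty_TH^2}^2\le P_\kappa(M_0)T^2\times(\text{same quantity}).
\end{equation}
Choosing $T_\kappa$ small enough that $P_\kappa(M_0)T_\kappa^2<1$ forces the differences to vanish, and then the uniqueness of $q$ follows from its elliptic equation.

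The main obstacle I anticipate is not the passage to the limit (which is clean because the strong $H^3$ convergence dominates all the nonlinear compositions $\a(\eta^\kappa)$, $\psi^\kappa(\eta,v)$ and the forcing $(b_0\cdot\nabla)^2\eta$) but rather ensuring that the pressure limit $q$ actually satisfies the elliptic equation used to construct each $q^{(n+1)}$, i.e. verifying that the divergence-free constraint $\Div_{\a^\kappa}v=0$ is recovered at the limit. This is resolved exactly as at the end of Section \ref{solv es}: applying $\Div_{\a^\kappa}$ to the momentum equation and combining with the elliptic problem for $q$ gives $\partial_t(\Div_{\a^\kappa}v)=0$, and $\Div v_0=0$ then closes the argument.
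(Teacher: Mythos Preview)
Your proposal is correct and follows essentially the same route as the paper: the paper's proof simply states that the strong convergence of $\{(v^{(n)},q^{(n)},\eta^{(n)})\}$ established in Section~\ref{diff} suffices to pass to the limit in \eqref{its}, that the bound \eqref{estima} comes from \eqref{claim1}, and that uniqueness follows by the exact same difference estimates used for convergence. You have filled in the details the paper omits---weak-$\ast$ compactness in $H^4$, strong $H^3$ convergence for the nonlinearities, and the uniqueness argument via a self-difference estimate---and these elaborations are sound.

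One minor remark: your anticipated obstacle about recovering $\Div_{\a^\kappa}v=0$ at the limit is not actually needed here. Each iterate already satisfies $\Div_{\a^{\kappa(n)}}v^{(n+1)}=0$, and since $\a^{\kappa(n)}\to\a^\kappa$ in $H^2$ and $v^{(n+1)}\to v$ in $H^3$ strongly, the constraint passes directly in the limit without appealing to the $\partial_t(\Div_{\a^\kappa}v)=0$ argument from Section~\ref{solv es}. Similarly, the pressure difference $\bar q^{(n)}$ is already controlled by \eqref{dpr}, so $q^{(n)}$ is Cauchy in $H^3$ and no subsequence extraction is required.
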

\begin{proof}
The strong convergence of $\{(v^{(n)}, q^{(n)}, \eta^{(n)})\}$ is more than sufficient to pass to the limit as $n \rightarrow 0$ in \eqref{its} to produce a solution to  \eqref{approximate}. The estimate \eqref{estima} follows from \eqref{claim1}, while the uniqueness follows by the exactly same arguments as that of showing the convergence.
\end{proof}

\section{Local well-posedness of \eqref{eq:mhd}}

In this section, we shall now present the
\begin{proof}[Proof of Theorem \ref{mainthm}]

For each $\kappa>0$, we recover the dependence of the solutions to the $\kappa$-approximate problem \eqref{approximate} on $\kappa$ as $(v(\kappa),q(\kappa),\eta(\kappa))$, which were constructed in Theorem \ref{nonlinearthm}.
The $\kappa$-independent estimates \eqref{bound} imply that  $(v(\kappa),q(\kappa),\eta(\kappa))$ is indeed a solution of \eqref{approximate} on the time interval $[0,T_1]$ and yield a strong convergence of $(v(\kappa),q(\kappa),\eta(\kappa))$ to a limit $(v ,q ,\eta )$, up to extraction of a subsequence, which is more than sufficient for us to pass to the limit as $\kappa\rightarrow0$ in \eqref{approximate} for each $t\in[0,T_1]$. We then find that $(v ,q ,\eta )$ is a strong solution to \eqref{eq:mhd} on $[0,T_1]$ and satisfies the estimates \eqref{enesti}. This shows the existence of solutions to \eqref{eq:mhd}.

For the uniqueness, we suppose that $(v^{i}, q^{i}, \eta^{i}),\ i=1,2,$ are two solutions to \eqref{eq:mhd} which satisfy the estimates \eqref{enesti}. We denote $\bar v=v^{1}-v^{2}, \bar q=q^{1}-q^{2}, \bar\eta=\eta^{1}-\eta^{2}$, and $\bar \a=\a^2-\a^1$, where $\a^i=\a(\eta^{i}),\ i=1,2.$ Then we find that
\begin{equation}
\label{dff}
\begin{cases}
\partial_t\bar\eta=\bar v &\text{in } \Omega,\\
\partial_t\bar v_i + \a^1_{ij}\partial_j\bar q-(b_0\cdot \nabla)^2 \bar\eta_i=\bar\a_{ij}\partial_jq^{2} &\text{in } \Omega,\\
\a^1_{ij}\partial_j\bar v_i=\bar\a_{ij}\partial_jv^{2}_i &\text{in } \Omega,\\
\bar q=0 &\text{on } \Gamma,\\
 (\bar\eta,\bar v)|_{t=0}=(0, 0).
\end{cases}
\end{equation}
To estimate the differences, we define the energy functional
\begin{equation}
\widetilde{\mathfrak{E}}(t)=\norm{\bar\eta}_2^2+\norm{\bar v}_2^2+\norm{b_0\cdot\nabla\bar\eta}_2^2+\abs{\bp^2\bar\eta_i\a^1_{i3}}_0^2.
 \end{equation}
Note that \eqref{dff} is comparable to \eqref{diffe}, and so our estimates will be mostly similar to that for \eqref{diffe} except when treating the estimates corresponding to \eqref{hg2} in tangential energy estimates since it involves the use of \eqref{loss} to avoid the loss of derivatives. The real difference is the treatment of the term
\begin{equation}\label{bbbd}
\int_{\Gamma} \bar{\mathcal Q}\a^{1}_{i\ell}N_\ell \bar{\mathcal V}_i,
 \end{equation}
 where we have defined the good unknows
 \begin{equation}
\mathcal V^{i}=\bp^2 v^{i}-\bp^2\eta^{i}_m\a^{i}_{mj}\partial_j v^{i},\quad
\mathcal Q^{i}=\bp^2 q^{i}-\bp^2\eta^{i}_m\a^{i}_{mj}\partial_j q^{i},\quad i=1,2,
\end{equation}
and denoted
\begin{equation}
\begin{split}
\bar{\mathcal V}&=\mathcal V^{1}-\mathcal V^{2}=\bp^2\bar v-\bp^2\bar\eta_m\a^1_{mj}\partial_j v^1+\bp^2\eta^2_m\bar\a_{mj}\partial_j v^1-\bp^2\eta^2_m\a^2_{mj}\partial_j \bar v,\\\bar{\mathcal Q}&=\mathcal Q^{1}-\mathcal Q^{2}=\bp^2\bar q-\bp^2\bar\eta_m\a^1_{mj}\partial_j q^1+\bp^2\eta^2_m\bar\a_{mj}\partial_j q^1-\bp^2\eta^2_m\a^2_{mj}\partial_j \bar q.
\end{split}
\end{equation}
We shall need to use the geometric transport-type structure to deal with the boundary integral \eqref{bbbd}. Indeed, we have
 \begin{align}
&  \int_{\Gamma} \bar{\mathcal Q}\a^{1}_{i\ell}N_\ell \bar{\mathcal V}_i \nonumber =- \int_{\Gamma} \pa_3 \bar q \bp^2\eta^{2}_j\a^{2}_{j3} \a^{2}_{i\ell}N_\ell \bar{\mathcal V}_i
-\int_{\Gamma} \pa_3 q^{1} \left(\bp^2\bar\eta_j\a^{1}_{j3}+\bp^2\eta^{2}_j\bar\a_{j3}\right)\a^{1}_{i\ell}N_\ell \bar{\mathcal V}_i \\&\nonumber\quad\le \hal \dfrac{d}{dt}\int_{\Gamma}(-\nabla q^1\cdot N)|\a^1_{i3}\bp^2\bar\eta_i|-\int_{\Gamma}\pa_3q^1\a^1_{n3}\bp^2\bar\eta_n\left(\bp^2\eta^2_m\bar\a_{mj}\partial_j v^1_i-\bp^2\eta^2_m\a^2_{mj}\partial_j \bar v_i\right)\a^1_{i\ell}N_\ell\\&\nonumber\qquad-\int_{\Gamma} \pa_3 q^{1} \left(\bp^2\bar\eta_j\a^{1}_{j3}+\bp^2\eta^{2}_j\bar\a_{j3}\right)\a^{1}_{i\ell}N_\ell \bar{\mathcal V}_i
\\&\quad\le \hal \dfrac{d}{dt}\int_{\Gamma}(-\nabla q^1\cdot N)|\a^1_{i3}\bp^2\bar\eta_i| +P(M_0)\left(\norm{\left(\bar v, \bar\eta\right)}_{2}^2+\abs{\a^1_{i3}\bp^2\bar\eta_i}^2\right).
\end{align}
By using the Taylor sign condition we can then finish the tangential energy estimate. This together with the other estimates that follows in a similar way as those of \eqref{diffe} in Section \ref{diff} yields the estimates of the following type:
\begin{equation}
\label{uniq}
 \widetilde{\mathfrak{E}}(t)\leq P(M_0) T\left(\sup_{t\in[0, T]}\widetilde{\mathfrak{E}}(t)\right).
\end{equation}
Then by taking $T_0$ sufficiently small (only depends on $M_0$), we have that $\widetilde{\mathfrak{E}}(t)=0$ for $t\in [0,T_0]$, which implies the uniqueness.
\end{proof}


\end{document}